\newtheorem*{rep@theorem}{\rep@title}
\newcommand{\newreptheorem}[2]{%
\newenvironment{rep#1}[1]{%
 \def\rep@title{#2 \ref{##1}}%
 \begin{rep@theorem}}%
 {\end{rep@theorem}}}
\newtheorem*{thm*}{Theorem}
\newtheorem{introthm}{Theorem}
\newtheorem*{intro_thmA}{Theorem A}
\newtheorem*{intro_thmB}{Theorem B}
\newtheorem*{intro_thmC}{Theorem C}
\theoremstyle{definition}
\newtheorem*{intro_defi*}{Definition}
\newtheorem*{intro_rem*}{Remark}
\theoremstyle{theorem}
\newtheorem{lemma}{Lemma}[subsection]
\newtheorem{thm}[lemma]{Theorem}
\newtheorem{prop}[lemma]{Proposition}
\newtheorem{cor}[lemma]{Corollary}
\theoremstyle{definition}
\newtheorem{defi}[lemma]{Definition}
\newtheorem{example}[lemma]{Example}
\newtheorem{rem}[lemma]{Remark}
\newtheorem{nota}[lemma]{Notation}
\newtheorem{scholium}[lemma]{Scholium}
\theoremstyle{definition}
\newcommand\norm{\bBigg@{0.8}}
 \newcommand{\indnorm}[2][flex]{\csname #1l\endcsname\|#2%
                                 \csname #1r\endcsname\|\mathclose{}}
                                  \newcommand{\indnorml}[4][flex]{\csname #1l\endcsname\|#2%
                                 \csname #1r\endcsname\|_{#3}^{#4}\mathclose{}}
\newcommand{\sv}[2][flex]{\indnorm[#1]{#2}}
\DeclareMathOperator{\res}{res}
\DeclareMathOperator{\Res}{\textbf{res}}
\DeclareMathOperator{\comp}{comp}
\DeclareMathOperator{\vbcr}{\mathcal{BA}c}
\newcommand{\N}{\ensuremath {\mathbb{N}}}
\newcommand{\R} {\ensuremath {\mathbb{R}}}
\newcommand{\Z} {\ensuremath {\mathbb{Z}}}
\newcommand{\ModG}{\mathbf{Mod}_{\R}^{\Gamma}}
\newcommand{\Mod}{\mathbf{Csn}_{\R}}
\renewcommand{\rho}{\varrho}
\def\phi{\varphi}
\long\def\forget#1{}
\def\widetilde{\tilde}
\begin{document}

\title[Amenability and Acyclicity in Bounded Cohomology]{Amenability and Acyclicity in Bounded Cohomology Theory}

\author{Marco Moraschini}
\address{\newline M. Moraschini \newline
Dipartimento di Matematica, Universit\`{a} di Bologna, Bologna, Italia}
\email{marco.moraschini2@unibo.it}

\author{George Raptis}
\address{\newline G. Raptis \newline
Fakult\"{a}t f\"{u}r Mathematik, Universit\"{a}t Regensburg, Regensburg, Germany}
\email{georgios.raptis@ur.de}

\thanks{}

%\keywords{bounded cohomology, homotopy fibers, acyclic modules}
\subjclass[2020]{18G90, 20J05,  55N10}
%55N10          Singular theory
%18G90          Other co(ho)mological theories
%\date{\today.\ \copyright{\ M.~Moraschini, G.~Raptis}.
%This work was supported by the CRC~1085 \emph{Higher Invariants}
%  (Universit\"at Regensburg, funded by the~DFG)}

\begin{abstract}
Johnson's characterization of amenable groups states that a discrete group $\Gamma$ is amenable
if and only if $H_b^{n \geq 1}(\Gamma; V) = 0$ for all dual normed $\R[\Gamma]$-modules $V$. 
In this paper, we extend the previous result to homomorphisms by proving the converse of the
\emph{Mapping Theorem}:
a surjective group homomorphism $\phi \colon \Gamma \to K$ has amenable kernel $H$ if and only if the induced inflation map 
$H^\bullet_b(K; V^H) \to H^\bullet_b(\Gamma; V)$ is an isometric isomorphism for every dual normed $\R[\Gamma]$-module 
$V$.
In addition, we obtain an analogous characterization for the (smaller) class of surjective group homomorphisms $\phi \colon \Gamma \to K$
with the property that the inflation maps in bounded cohomology are isometric isomorphisms for \emph{all} Banach $\Gamma$-modules.
Finally, we also prove a characterization of the (larger) class of \emph{boundedly acyclic} homomorphisms, that is, the class of group homomorphisms $\phi \colon \Gamma \to K$ for which the restriction maps in bounded cohomology $H^\bullet_b(K; V) \to H^\bullet_b(\Gamma; \phi^{-1}V)$ are isomorphisms for a suitable family of dual normed $\R[K]$-modules $V$ including the trivial $\R[K]$-module $\R$. We then extend the first and third results to topological spaces and obtain characterizations of \emph{amenable} maps and
\emph{boundedly acyclic} maps in terms of the vanishing of the bounded cohomology of their homotopy fibers with respect to appropriate choices 
of coefficients.
  \end{abstract}

\maketitle

%%%%%%%%%%%%%%%%%%%%%%%%%%%%%%%%%%%%%%%%%%%%%%%%%%%%%%%%%%%
\section{Introduction and Statement of Results}

Bounded cohomology $H^\bullet_b(-;-)$ was first introduced by Johnson~\cite{Johnson} and Trauber (unpublished) to address problems about Banach algebras. 
% It consists in a variation of ordinary singular cohomology which involves (semi)norms. 
The theory of bounded cohomology grew into an independent and active research field after the pioneering work of Gromov~\cite{vbc} who extended the theory from groups to topological spaces, developed many of its fundamental properties, and explored its connections with geometry and group theory. 
In the setting of topological spaces, bounded cohomology is a functional-analytic variant of ordinary singular cohomology which involves only those singular cochains that have bounded norm. Building on Gromov's work, Ivanov~\cite{ivanov, Ivanov17} gave a detailed account of the foundations of bounded cohomology that also emphasized the use of methods from homological algebra; the case of twisted coefficients was treated also 
by Noskov~\cite{noskov}. The theory of bounded cohomology was developed further and extended to topological groups by Burger and Monod \cite{Burger_Monod_2002, monod}.

\medskip

Among the first striking results of bounded cohomology theory is the following characterization of amenable groups due to Johnson~\cite{Johnson}:
\emph{a discrete group $\Gamma$ is amenable if and only if $H^n_b(\Gamma; V) = 0$ for all dual normed} $\mathbb{R}[\Gamma]$-\emph{modules $V$ 
and} $n \geq 1$~(see also \cite[Corollary~3.11]{Frigerio:book}). We emphasize that this characterization holds specifically with respect to \emph{dual normed} $\R[\Gamma]$-modules. In other words, the bounded cohomology groups $H^n_b(\Gamma ; V)$ of an amenable group $\Gamma$ do not vanish in general for arbitrary normed $\R[\Gamma]$-modules (see, e.g.,~\cite{noskov}). On the other hand, the triviality of the bounded cohomology of $\Gamma$ with trivial coefficients in $\R$ does not suffice to characterize amenability; counterexamples were constructed by Matsumoto--Morita~\cite{Matsu-Mor} and L\"{o}h~\cite{Loeh:dim}.
In fact, the situation is even more delicate: Monod has recently shown that even the vanishing of the bounded cohomology for all \emph{separable} coefficients is not sufficient for a group to be amenable~\cite{monod:thompson}.

The central role of amenability in the theory of bounded cohomology is further elucidated by Gromov's \emph{Mapping Theorem}~\cite{vbc, Ivanov17, FM:Grom}. We state here a strong version of this theorem for maps $f \colon X \to Y$ and twisted coefficients on $X$~\cite[Corollary~2.8.5]{clara:book}; the complete proof can be obtained by combining, e.g.~\cite[Corollary~7.5.10]{monod}, \cite[Appendix~B]{Loehthesis} and~\cite[Corollary~6.4]{Ivanov17}. 
We will review the definition and basic properties of bounded cohomology in \textbf{Section~\ref{sec:bdd:cohomology}}.

\begin{thm*}[Mapping Theorem] \label{thm:mapping_thm}
Let $f \colon X \to Y$ be a map of based path-connected spaces such that $f_* \colon \pi_1(X) \to \pi_1(Y)$ is surjective with amenable kernel $H$.
Then, for all dual normed $\mathbb{R}[\pi_1(X)]$-modules $V,$ the induced inflation map
$$
H^\bullet_b(f ; \textup{I}_V) \colon H_b^\bullet(Y; V^H) \to H_b^\bullet(X; V)
$$
is an isometric isomorphism. Here $\textup{I}_V \colon V^H \to V$ denotes the inclusion of $H$-fixed points of $V$.
\end{thm*}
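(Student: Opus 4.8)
The plan is to reduce the statement for spaces to the corresponding statement for discrete groups and then to prove the group-theoretic version by relative homological algebra in the style of Ivanov, exploiting the amenability of $H$ through an invariant mean. First I would invoke the foundational isometric identification of the bounded cohomology of a based path-connected space with that of its fundamental group: for every dual normed $\R[\pi_1(X)]$-module $V$ there is a canonical isometric isomorphism $H^\bullet_b(X; V) \cong H^\bullet_b(\pi_1(X); V)$, and likewise for $Y$. Writing $\Gamma = \pi_1(X)$, $K = \pi_1(Y)$ and $\phi = f_*$, and noting that $V^H$ is a dual normed $\R[K]$-module, this reduces the theorem to the assertion that the inflation map $H^\bullet_b(K; V^H) \to H^\bullet_b(\Gamma; V)$ attached to the surjection $\phi \colon \Gamma \to K$ with amenable kernel $H$ is an isometric isomorphism.

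For the group statement I would use the principle that $H^\bullet_b(\Gamma; V)$ may be computed, isometrically, as the cohomology of the $\Gamma$-invariants of any strong relatively injective resolution of $V$. Besides the standard resolution $\ell^\infty(\Gamma^{\bullet+1}, V)$, whose $\Gamma$-invariants give the usual bar complex for $H^\bullet_b(\Gamma; V)$, I would introduce the complex $\ell^\infty(K^{\bullet+1}, V)$, with $\Gamma$ acting diagonally on the $K$-coordinates through $\phi$ and on the coefficients $V$. The standard simplicial contraction shows this is a strong resolution of $V$, and a direct check identifies its $\Gamma$-invariants with $\ell^\infty(K^{\bullet+1}, V^H)^K$: invariance under the kernel $H$ forces the values to lie in $V^H$, and the residual $K$-action gives exactly the bar complex computing $H^\bullet_b(K; V^H)$. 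Pullback along $\phi^{\bullet+1} \colon \Gamma^{\bullet+1} \to K^{\bullet+1}$ is then a chain map between the two resolutions covering $\id_V$, and on $\Gamma$-invariants it is precisely the inflation map $\phi^*$.

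The crux, and the step I expect to be the main obstacle, is to show that the terms $\ell^\infty(K^{\bullet+1}, V)$ are relatively injective as normed $\R[\Gamma]$-modules; this is exactly the point at which amenability of $H$ enters. The building block is $\ell^\infty(\Gamma/H, V) = \ell^\infty(K, V)$, and I would establish its relative injectivity by constructing the required norm non-increasing extension operator out of an invariant mean $m$ on $H$. Here the dual structure $V = W^*$ is indispensable: it allows one to integrate a bounded $V$-valued function against $m$ in the weak-$*$ sense, via $\langle \int_H \alpha \, dm,\, w\rangle = m\bigl(h \mapsto \langle \alpha(h), w\rangle\bigr)$ for $w \in W$, and the normalization $\|m\| = 1$ makes this integration norm non-increasing while invariance of $m$ controls the $\Gamma$-equivariance. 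Feeding this averaging into the standard extension construction for $\ell^\infty$-modules yields the desired relatively injective property for every term of the resolution.

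Finally, with both $\ell^\infty(\Gamma^{\bullet+1}, V)$ and $\ell^\infty(K^{\bullet+1}, V)$ exhibited as strong relatively injective resolutions of the same $\Gamma$-module $V$, the fundamental lemma of relative homological algebra provides mutually inverse $\Gamma$-chain homotopy equivalences covering $\id_V$, and the induced isomorphism on the cohomology of the $\Gamma$-invariants is isometric. Applying this to the pullback chain map identifies $\phi^*$ as an isometric isomorphism, which is the group-theoretic Mapping Theorem and hence, after the reduction of the first step, the theorem for spaces. I would stress that the duality of $V$ is used in an essential way in the third step, to produce the mean-integration and thereby the relative injectivity; this is precisely why the conclusion fails for general normed coefficients, in line with Johnson's characterization recalled above.
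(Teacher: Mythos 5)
Your proposal is correct and follows exactly the route the paper itself takes: the paper does not write out a proof of the Mapping Theorem but assembles it from the isometric identification $H^\bullet_b(X;V)\cong H^\bullet_b(\pi_1(X);V)$ of L\"oh/Ivanov together with Monod's Corollary~7.5.10, whose proof is precisely your argument --- resolving $V$ by $\ell^\infty(K^{\bullet+1},V)$, identifying the $\Gamma$-invariants with the bar complex of $K$ with coefficients in $V^H$, and obtaining relative injectivity by weak-$*$ integration against an invariant mean on the amenable kernel. The only point worth flagging is that your third step should make explicit that one first reduces to $\ell^\infty(\Gamma,-)$ and then averages over the $H$-ambiguity of a set-theoretic section $K\to\Gamma$, but this is the standard construction and not a gap.
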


We call a map $f \colon X \to Y$ \emph{amenable} if it satisfies the conclusion of the \emph{Mapping Theorem} (see Definition \ref{def:amenable:maps}). As a consequence of Johnson's characterization of amenability and the \emph{Mapping Theorem}, we observe that 
a map $X \to *$ is amenable if and only if $X$ has amenable fundamental group. In this case, we also say that $X$ is \emph{amenable}. 

Our first goal in this paper is to prove that the assumptions of the \emph{Mapping Theorem} completely characterize amenable maps. This characterization may be regarded as a parametrized version of Johnson's characterization of amenable groups. To clarify this viewpoint, let us first observe  
the following homotopy-theoretic reformulation of the assumptions on $f_*$ in the \emph{Mapping Theorem} (cf. \cite[pp. 167--168]{monod}). Let $F$ denote the \emph{homotopy fiber of $f$}, that is, the fiber of a replacement of $f$ by a fibration up to (weak) homotopy equivalence. The (weak) homotopy type of $F$ is uniquely determined and there is a long exact sequence of homotopy groups (see, for example, \cite{tomDieck-AT}):
$$
\cdots \to \pi_2(Y) \to \pi_1(F) \to \pi_1(X) \xrightarrow{f_*} \pi_1(Y) \to \pi_0(F) \to \pi_0(X) \to \cdots \ .
$$
Since $X$ is path-connected, it follows that $F$ is path-connected if and only if $f_*$ is surjective. Moreover, $\pi_1(F)$ is a (central) extension 
of $\ker(f_*)$ %by an abelian group, 
$$
1 \to A \to \pi_1(F) \to \ker(f_*) \to 1$$
where the abelian group $A$ is given by the image of the homomorphism $\pi_2(Y) \to \pi_1(F)$. Therefore, $\pi_1(F)$ is amenable if and only if $\ker(f_*)$ is amenable~\cite[Proposition~3.4]{Frigerio:book}. The following theorem partly reformulates the \emph{Mapping Theorem} and also shows that the assumptions of the \emph{Mapping Theorem} completely characterize amenable maps.

\begin{introthm}\label{main:thm:intro:amenable} 
Let $f \colon X \to Y$ be a map of based path-connected spaces, let $f_* \colon \pi_1(X) \to \pi_1(Y)$ be the induced homomorphism between the fundamental groups and let $H$ denote its kernel. Let $F$ denote the homotopy fiber of $f$ and suppose that $F$ is path-connected (equivalently, $f_*$ is surjective). Then the following are equivalent:
\begin{enumerate}
\item[(1)] $f$ is an amenable map, that is, for all dual normed $\mathbb{R}[\pi_1(X)]$-modules $V$, the induced inflation map
$$
H^\bullet_b(f ; \textup{I}_V) \colon H_b^\bullet(Y; V^H) \to H_b^\bullet(X; V)
$$
is an isometric isomorphism.
\item[(2)] For all dual normed $\mathbb{R}[\pi_1(X)]$-modules $V$, the induced inflation map
$$
H^1_b(f ; \textup{I}_V) \colon H_b^1(Y; V^H) \to H_b^1(X; V)
$$
is an isomorphism.
\item[(3)] $F$ is amenable. % fundamental group. 
\end{enumerate}
\end{introthm}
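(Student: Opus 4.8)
The plan is to establish the cycle of implications $(1)\Rightarrow(2)\Rightarrow(3)\Rightarrow(1)$. The implication $(1)\Rightarrow(2)$ is immediate, since $(2)$ only asks for an isomorphism in degree one, which is part of $(1)$. For $(3)\Rightarrow(1)$ I would invoke the \emph{Mapping Theorem}: as recalled above, the central extension $1\to A\to\pi_1(F)\to H\to 1$ together with \cite[Proposition~3.4]{Frigerio:book} shows that $\pi_1(F)$ is amenable if and only if $H=\ker(f_*)$ is amenable, and the \emph{Mapping Theorem} then yields that $f$ is amenable. Thus the entire content lies in $(2)\Rightarrow(3)$, which should be read as a parametrized converse to Johnson's characterization of amenable groups.

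To prove $(2)\Rightarrow(3)$, I would first pass from the map $f$ to the surjection $\phi:=f_*\colon\Gamma\to K$, writing $\Gamma=\pi_1(X)$, $K=\pi_1(Y)$ and $H=\ker(\phi)$. Via the isometric isomorphism $H^\bullet_b(X;V)\cong H^\bullet_b(\Gamma;V)$ between the bounded cohomology of a space and of its fundamental group (and its analogue for $Y$, noting that $V^H$ is an $\R[K]$-module since $H\trianglelefteq\Gamma$ and $\Gamma/H\cong K$), the inflation map $H^\bullet_b(f;\mathrm{I}_V)$ is identified with the group-theoretic inflation map $H^\bullet_b(K;V^H)\to H^\bullet_b(\Gamma;V)$ of the extension $1\to H\to\Gamma\to K\to 1$. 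Since $\pi_1(F)$ is amenable precisely when $H$ is, it suffices to show: \emph{if the inflation map is an isomorphism in degree one for every dual normed $\R[\Gamma]$-module $V$, then $H$ is amenable.}

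The key idea is to feed coinduced coefficients into the hypothesis. Given an arbitrary dual normed $\R[H]$-module $W$, set $V:=\mathrm{Coind}_H^\Gamma(W)$, the dual normed $\R[\Gamma]$-module of bounded $H$-equivariant functions $\Gamma\to W$; Shapiro's Lemma for bounded cohomology then gives $H^\bullet_b(\Gamma;V)\cong H^\bullet_b(H;W)$ \cite{monod}. The crucial point is that, because $H$ is \emph{normal}, every $f\in V^H$ is right $H$-invariant and takes values in $W^H$, so that $V^H\cong\ell^\infty(K,W^H)=\mathrm{Coind}_{\{1\}}^K(W^H)$ with $K$ acting only by translation. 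As this is coinduced from the trivial subgroup, Shapiro's Lemma forces $H^n_b(K;V^H)=0$ for all $n\geq 1$. Hypothesis $(2)$ then gives $0=H^1_b(K;V^H)\cong H^1_b(\Gamma;V)\cong H^1_b(H;W)$; as $W$ was arbitrary, $H^1_b(H;W)=0$ for every dual normed $\R[H]$-module $W$. Finally, a standard dimension-shifting argument---embedding $W$ into the relatively injective module $\ell^\infty(H,W)$ and iterating the resulting long exact sequences---upgrades this to $H^{\geq 1}_b(H;W)=0$ for all such $W$, whence $H$ is amenable by Johnson's theorem. This proves $(3)$.

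I expect the main obstacle to be the careful homological bookkeeping within the category of dual normed modules: one must verify that coinduction preserves dual modules, that Shapiro's Lemma holds in this setting, and---most delicately---that the quotient used in the dimension-shifting step remains a dual normed module (i.e.\ that the relevant submodule is weak-$*$ closed). The conceptual heart of the proof, by contrast, is the simple observation that testing the inflation isomorphism against coinduced coefficients collapses the $K$-side bounded cohomology to zero, thereby converting hypothesis $(2)$ directly into the vanishing of $H^1_b(H;-)$ and reducing the problem to the degree-one form of Johnson's characterization.
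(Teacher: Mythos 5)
Your proposal is correct, and its overall architecture (reduce to $f_*\colon\Gamma\to K$ via the Mapping Theorem; cycle $(1)\Rightarrow(2)\Rightarrow(3)\Rightarrow(1)$ with all the content in $(2)\Rightarrow(3)$) matches the paper's. The key implication, however, is proved by a genuinely different route. The paper tests hypothesis (2) against the single Johnson module $V=(\ell^\infty(\Gamma)/\R)'$: from the inflation--restriction exact sequence $0\to H^1_b(K;V^H)\to H^1_b(\Gamma;V)\xrightarrow{\res^1}H^1_b(H;V)$ it deduces $\res^1[J]=0$, and an explicit computation with a primitive of the restricted Johnson cocycle then produces a non-trivial $H$-invariant functional on $\ell^\infty(\Gamma)$ --- a \emph{relative} version of Johnson's argument, concluded via an invariant-mean lemma. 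You instead test (2) against all coinduced modules $V=\mathbf{I}_H^\Gamma W$: normality of $H$ gives $V^H\cong\ell^\infty(K,W^H)=\mathbf{I}_{\{1\}}^K(W^H)$ (for $f\in\ell^\infty(\Gamma,W)^H$, right $H$-invariance $f(xh)=f(x)$ unwinds to $(xhx^{-1})\cdot f(x)=f(x)$, so $f$ takes values in $W^H$ and descends to $K$), whose bounded cohomology vanishes in positive degrees by Shapiro; hence (2) forces $H^1_b(H;W)\cong H^1_b(\Gamma;V)\cong H^1_b(K;V^H)=0$ for every dual normed $\R[H]$-module $W$, and the \emph{absolute} Johnson theorem applies to $H$. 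Your route avoids both the inflation--restriction sequence and the relative mean lemma at the cost of induction-module bookkeeping; it is essentially the same Shapiro/coinduction device the paper deploys for the boundedly acyclic analogue (Theorem \ref{thm:main:groups}, $(3)\Rightarrow(4)$, via Propositions \ref{prop:shapiro1}, \ref{prop:coeff:isomorphic} and \ref{prop:shapiro2}). One simplification: the dimension-shifting step you flag as the delicate point is unnecessary, since the converse half of Johnson's characterization already follows from the vanishing of $H^1_b(H;-)$ on dual modules (the Johnson class is a degree-one obstruction), so the weak-$*$ closedness worry can be bypassed entirely.
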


We note that the characterization in (3) essentially states that a map $f \colon X \to Y$ is amenable if and only if $f$ is \emph{fiberwise} amenable. %, that is, if and only if the homotopy fiber $F$ is amenable.  
Using the \emph{Mapping Theorem}, Theorem \ref{main:thm:intro:amenable} will be obtained from an analogous characterization of \emph{amenable} homomorphisms in the context of discrete groups (see Definition \ref{def:amenable:maps} and Theorem \ref{thm:main:groups:amenable}). More precisely, Theorem \ref{thm:main:groups:amenable} characterizes surjective group homomorphisms $\phi \colon \Gamma \to K$ with amenable kernel in terms of bounded cohomology and conditions analogous to Theorem \ref{main:thm:intro:amenable} -- the statement for discrete groups corresponds to the special case of Theorem \ref{main:thm:intro:amenable} for maps of the form $B\phi \colon B\Gamma \to BK$. The proof of this characterization for discrete groups is obtained from a ``relative" version of Johnson's characterization of amenability combined with the exact sequences in bounded cohomology that are induced by group extensions~\cite[Chapter 12]{monod}. We prove Theorem \ref{thm:main:groups:amenable} and then deduce Theorem \ref{main:thm:intro:amenable} in \textbf{Section~\ref{section:proof:amenable:maps}}. 

\medskip

It is well-known that the vanishing of the bounded cohomology groups $H^{n \geq 1}_b(\Gamma; V)$ for all Banach $\Gamma$-modules $V$ is a much stronger condition on $\Gamma$ which actually characterizes the class of \emph{finite} groups \cite[Section~3.5]{Frigerio:book}. In \textbf{Section \ref{section:proof:amenable:maps}}, we also prove the following generalization of this characterization to the context of group homomorphisms.

\begin{introthm}\label{main:thm:intro:finite} 
Let $\phi \colon \Gamma \to K$ be a surjective homomorphism of discrete groups and let $H$ denote the kernel of $\phi$. 
Then the following are equivalent:
\begin{enumerate}
\item[(1)] For all Banach $\Gamma$-modules $V$, the induced inflation map
$$
H^{\bullet}_b(\phi ; \textup{I}_V) \colon H_b^{\bullet}(K; V^H) \to H_b^{\bullet}(\Gamma; V)
$$
is an isometric isomorphism. 
\item[(2)] For all Banach $\Gamma$-modules $V$, the induced inflation map
$$
H^1_b(\phi ; \textup{I}_V) \colon H_b^1(K; V^H) \to H_b^1(\Gamma; V)
$$
is an isomorphism.
\item[(3)] $H$ is finite.
\end{enumerate}
\end{introthm}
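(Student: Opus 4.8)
The plan is to establish the cycle of implications $(1)\Rightarrow(2)\Rightarrow(3)\Rightarrow(1)$, with $(2)\Rightarrow(3)$ as the substantial step. The implication $(1)\Rightarrow(2)$ is immediate: a Banach $\Gamma$-module is in particular a normed $\R[\Gamma]$-module, so condition (1) restricts to Banach coefficients and to degree $1$, where an isometric isomorphism is in particular an isomorphism.

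For $(3)\Rightarrow(1)$ I would run the proof of the \emph{Mapping Theorem} with the amenable mean on $H$ replaced by honest averaging. When $H$ is finite, the operator $p_V=\tfrac{1}{|H|}\sum_{h\in H} h\colon V\to V^H$ is, for \emph{every} normed $\R[\Gamma]$-module $V$, a norm non-increasing $\Gamma$-equivariant projection onto $V^H$; no weak-$*$ compactness is needed, which is exactly why the argument is no longer confined to dual modules. Feeding this averaging operator into the relatively injective resolution argument underlying the \emph{Mapping Theorem}---equivalently, using that $H^{q}_b(H;V)=0$ for $q\ge 1$ and $H^0_b(H;V)=V^H$ for all normed $V$ by \cite[3.5]{Frigerio:book}, together with the extension exact sequences of \cite[Chapter~12]{monod}---shows that the inflation map is an isomorphism in every degree, and the norm control supplied by $p_V$ upgrades this to an isometric isomorphism for all normed $V$.

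The core is $(2)\Rightarrow(3)$, which I would prove by contraposition: assuming $H$ infinite, I produce a Banach $\Gamma$-module $V$ for which the degree-$1$ inflation map fails to be surjective. The first step is a \emph{degree-one key lemma}: if $H$ is infinite, there is a Banach $H$-module $W$ with $H^1_b(H;W)\ne 0$. By \cite[3.5]{Frigerio:book} there is a Banach $H$-module $W_0$ and some $n\ge 1$ with $H^n_b(H;W_0)\ne 0$; applying the short exact sequence $0\to W_0\to \ell^\infty(H,W_0)\to Q\to 0$, whose middle term is the relatively injective module $\mathrm{Coind}_{\{1\}}^{H}W_0$ with vanishing bounded cohomology in positive degrees (by Shapiro's lemma), and iterating the resulting dimension shift $H^{n}_b(H;W_0)\cong H^{n-1}_b(H;Q)$, one descends to degree $1$ while staying inside Banach modules. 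With $W$ in hand I set $V=\mathrm{Coind}_{H}^{\Gamma}W$, again a Banach $\Gamma$-module. Shapiro's lemma gives an isometric isomorphism $H^\bullet_b(\Gamma;V)\cong H^\bullet_b(H;W)$ that factors as restriction to $H$ followed by the counit, so the restriction map $\res\colon H^1_b(\Gamma;V)\to H^1_b(H;V)^{K}$ is nonzero. Finally, the low-degree exactness of the extension $1\to H\to \Gamma\to K\to 1$ \cite[Chapter~12]{monod} says that inflation $H^1_b(K;V^H)\to H^1_b(\Gamma;V)$ is injective with image exactly $\ker(\res)$; since $\res\ne 0$, this inflation map is not surjective, so (2) fails.

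The main obstacle is the hard direction $(2)\Rightarrow(3)$, and within it two points require care. First, the degree-one key lemma must be arranged to stay within \emph{Banach} modules, since condition (2) is stated for Banach coefficients; this is why I dimension-shift with the coinduced modules $\ell^\infty(H,-)$, which are automatically complete, rather than invoking a completion argument. Second, one must know that the Shapiro isomorphism is realized \emph{through} the restriction map, so that a nonzero class for $H$ forces $\res\ne0$ for $\Gamma$; this, combined with the precise low-degree exactness of the extension sequence, is what converts nonvanishing of $H^1_b(H;W)$ into genuine non-surjectivity of inflation. For $(3)\Rightarrow(1)$ the only delicate point is obtaining \emph{isometry}, and not merely an isomorphism, for all normed modules, which is handled by the norm non-increasing averaging operator $p_V$ exactly as in the proof of the \emph{Mapping Theorem}.
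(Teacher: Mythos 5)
Your proposal is correct and follows the same outer cycle $(1)\Rightarrow(2)\Rightarrow(3)\Rightarrow(1)$ as the paper, but the substantial step $(2)\Rightarrow(3)$ is done by a genuinely different route. The paper argues directly, by relativizing Johnson's argument: it takes the explicit Banach $\Gamma$-module $\ell^1_0(\Gamma)$ (the kernel of the summation map on $\ell^1(\Gamma)$) and the Johnson-type cocycle $J(g_0,g_1)=\delta_{g_1}-\delta_{g_0}$; hypothesis (2) together with the low-degree exact sequence forces $\res^1[J]=0$ in $H^1_b(H;\ell^1_0(\Gamma))$, and a primitive $\alpha$ produces a non-trivial $H$-invariant vector $\xi=\delta_1-\alpha(1)\in\ell^1(\Gamma)$ with $\sigma(\xi)=1$, which forces $H$ to be finite since a non-zero $H$-invariant summable function must have finite support on each $H$-orbit. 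You instead argue by contraposition: you import the absolute fact that an infinite group has $H^1_b(H;W)\neq 0$ for some Banach $H$-module $W$ (after dimension-shifting), transport it to $\Gamma$ by coinduction and Eckmann--Shapiro, and conclude that $\res^1\neq 0$, hence inflation is not surjective. Both routes hinge on the same exact sequence $0\to H^1_b(K;V^H)\to H^1_b(\Gamma;V)\to H^1_b(H;V)$; yours is more modular (reducing the relative statement to the absolute one), while the paper's is self-contained and exhibits the invariant vector explicitly. One point you must make explicit: the Shapiro isomorphism you use is applied to modules such as $\ell^1_0(H)$ and its dimension shifts, which are Banach but not dual, whereas the version stated in the paper (following Monod) is for dual normed modules; for discrete groups the cochain-level identification $\ell^\infty(\Gamma^{\bullet+1},\mathbf{I}_H^\Gamma W)^\Gamma\cong\ell^\infty(\Gamma^{\bullet+1},W)^H$ holds for arbitrary Banach $W$, so the step is valid, but it needs this justification rather than a bare citation. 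Finally, for $(3)\Rightarrow(1)$ your averaging projection $p_V$ is exactly the direct form of the paper's appeal to the compact-kernel case in Monod; note only that the ``equivalently'' via the extension exact sequences of Chapter 12 would not by itself cover all degrees, so the resolution argument should remain the primary one.
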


We do not know a corresponding statement to Theorem B for topological spaces that would be analogous to Theorem \ref{main:thm:intro:amenable}, because the argument for passing from groups to topological spaces cannot be applied in the same way (see Remark~\ref{rem:finite:groups:spaces}).

\medskip

Once we have these characterizations, a natural question is to understand what happens if we only consider the weaker condition that the bounded cohomology of the homotopy fiber $F$ (resp. $H$) with coefficients in $\R$ vanishes in positive degrees. (For any group $\Gamma$, $\R$ is regarded as an
$\R[\Gamma]$-module with the trivial $\Gamma$-action.) Since there are many non-amenable groups with trivial bounded cohomology with $\R$-coefficients (e.g., the group of compactly supported homeomorphisms of $\mathbb{R}^n$~\cite{Matsu-Mor} and the mitotic groups \cite{Loeh:dim}), this class of maps (or homomorphisms) is strictly larger than the class of amenable maps (or homomorphisms). Additionally, in connection with the definition and properties
of acyclic maps in classical homotopy theory (see, for example, \cite{HH-acyclic, Raptis-acyclic}), it seems natural to identify the corresponding class of maps for bounded cohomology. We say that a map $f \colon X \to Y$ of based path-connected spaces is \emph{boundedly $n$-acyclic} if the induced restriction map
$$
H^i_b(f; V) \colon H_b^i(Y; V) \to H_b^i(X; f_*^{-1}V)
$$
is an isomorphism for $i \leq n$ and injective for $i = n + 1$, for every dual normed $\mathbb{R}[\pi_1(Y)]$-module $V$ of the form $\ell^{\infty}(S, \R)$
where $S$ is a $\pi_1(Y)$-set. A Banach $\Gamma$-module of this form will be called \emph{$\R$-generated}. The key property of these modules
is that every group with trivial bounded cohomology with $\R$-coefficients also has 
trivial bounded cohomology with respect to every $\R$-generated Banach $\Gamma$-module (Proposition~\ref{prop:trivial_mod}).
Moreover, we say that a topological space $X$ is boundedly $n$-acyclic if the map $X \to *$ is boundedly $n$-acyclic; equivalently, this means that $X$ is path-connected and $H^i_b(X; \R) = 0$ for $1 \leq i \leq n$ (see Proposition \ref{prop:trivial_mod}). In analogy with the characterizations of acyclic maps for singular cohomology, we obtain the following characterizations for the class of boundedly $n$-acyclic maps. 

\begin{introthm}\label{main:thm:intro} 
Let $f \colon X \to Y$ be a map of based path-connected spaces, let $F$ denote its homotopy fiber, and let $n \geq 0$ be an integer or $n = \infty$.  We denote by $f_* \colon \pi_1(X) \to \pi_1(Y)$ the induced homomorphism between the fundamental groups. Then the following are equivalent:
\begin{enumerate}
\item[(1)] $f$ is boundedly $n$-acyclic.
\item[(2)] The induced restriction map 
$$H^i_b(f; V) \colon H_b^i(Y; V) \to H_b^i(X; f_*^{-1}V)$$ 
is surjective for $0 \leq i \leq n$ and every $\R$-generated Banach $\pi_1(Y)$-module $V$.
\item[(3)] $F$ is path-connected and $H^i_b(X; f_*^{-1}V) = 0$ for $1 \leq i \leq n$ and every relatively injective $\R$-generated Banach $\pi_1(Y)$-module $V$.
\item[(4)]  $F$ is boundedly $n$-acyclic, that is, $H^0_b(F; \mathbb{R}) \cong \mathbb{R}$ and $H^i_b(F; \mathbb{R}) = 0$ for $1 \leq i \leq n$.
\end{enumerate}
\end{introthm}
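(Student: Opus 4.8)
The plan is to reduce the statement to the bounded cohomology of groups and then prove the cycle of implications $(1)\Rightarrow(2)\Rightarrow(3)\Rightarrow(4)\Rightarrow(1)$. First I would use the homotopy invariance of bounded cohomology to replace $H^\bullet_b(X;-)$, $H^\bullet_b(Y;-)$ and $H^\bullet_b(F;\R)$ by the corresponding bounded cohomology of $\Gamma := \pi_1(X)$, $K := \pi_1(Y)$ and $\pi_1(F)$. Writing $H := \ker(f_*)$, the central extension $1 \to A \to \pi_1(F) \to H \to 1$ with $A = \mathrm{im}(\pi_2(Y) \to \pi_1(F))$ abelian (recorded before the theorem) lets me apply the \emph{Mapping Theorem} with trivial coefficients to identify $H^\bullet_b(\pi_1(F);\R) \cong H^\bullet_b(H;\R)$. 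Thus condition (4) amounts to: $f_*$ is surjective and $H^i_b(H;\R) = 0$ for $1 \le i \le n$.

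Next I would dispose of surjectivity. In degree $0$ the restriction map is the inclusion $V^K \hookrightarrow V^{\mathrm{im}(f_*)}$, and taking $V = \ell^\infty(K/\mathrm{im}(f_*))$ shows that degree-$0$ surjectivity for all $\R$-generated $V$ — which is contained in each of (1) and (2) — forces $\mathrm{im}(f_*) = K$, i.e.\ $F$ path-connected. So throughout I may assume the extension $1 \to H \to \Gamma \to K \to 1$, under which $f_*^{-1}V$ is the inflation of $V$ and carries the trivial $H$-action. The implication $(1)\Rightarrow(2)$ is then immediate. For $(2)\Rightarrow(3)$ I would use that a relatively injective $K$-module $V$ is acyclic, so $H^i_b(K;V)=0$ for $i \ge 1$; surjectivity of the restriction map in degrees $1 \le i \le n$ then forces $H^i_b(\Gamma; f_*^{-1}V) = 0$, which is (3).

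For $(3)\Rightarrow(4)$ the key point is that, $H$ being normal, the inflation of the regular $\R$-generated relatively injective $K$-module $\ell^\infty(K)$ is precisely the coinduced module $\mathrm{Coind}_H^\Gamma \R$; Shapiro's lemma gives $H^\bullet_b(\Gamma; f_*^{-1}\ell^\infty(K)) \cong H^\bullet_b(H;\R)$, so applying the vanishing in (3) to this single module $V = \ell^\infty(K)$ yields $H^i_b(H;\R) = 0$ for $1 \le i \le n$. Finally, for $(4)\Rightarrow(1)$ I would run the Hochschild--Serre spectral sequence $E_2^{p,q} = H^p_b(K; H^q_b(H; f_*^{-1}V)) \Rightarrow H^{p+q}_b(\Gamma; f_*^{-1}V)$ of the extension. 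Since $f_*^{-1}V$ is a trivial $H$-module, one has $H^0_b(H; f_*^{-1}V) = V$ as a $K$-module; granting that the hypothesis upgrades to $H^q_b(H; f_*^{-1}V) = 0$ for $1 \le q \le n$, the $E_2$-page is concentrated on the row $q=0$ through total degree $n$, and the base edge homomorphism — which corresponds to the inflation map and hence to $H^\bullet_b(f;V)$ — is then an isomorphism for $\bullet \le n$ and injective for $\bullet = n+1$, giving (1).

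The hardest part is this last upgrade, namely passing from vanishing with $\R$-coefficients to vanishing with arbitrary $\R$-generated trivial coefficients $\ell^\infty(S)$. This requires knowing that the bounded cohomology of $H$ commutes suitably with the functor $\ell^\infty(S,-)$, which is exactly where the restriction to $\R$-generated modules is indispensable; I would isolate this as a separate lemma. Combined with it, the remaining delicate point is the convergence of the bounded-cohomology Hochschild--Serre spectral sequence and the identification of its base edge map with the restriction map, for which I would appeal to the machinery of \cite{monod}.
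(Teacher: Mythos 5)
Your proposal is essentially correct and, for most of the argument, coincides with the paper's: the reduction to the group homomorphism $f_*$ via homotopy invariance and the \emph{Mapping Theorem} is exactly Proposition \ref{prop:compare_bdd_acyclic}; the degree-$0$ argument with $V = \ell^\infty(K/\mathrm{im}(f_*))$ forcing surjectivity, the use of acyclicity of relatively injective modules for $(2)\Rightarrow(3)$, and the identification $\phi^{-1}\ell^\infty(K) \cong \mathbf{I}_H^\Gamma\R$ plus Eckmann--Shapiro for $(3)\Rightarrow(4)$ all appear verbatim in the proof of Theorem \ref{thm:main:groups}. The one place you genuinely diverge is $(4)\Rightarrow(1)$: you invoke the Hochschild--Serre spectral sequence, whereas the paper runs a direct dimension-shifting argument (Proposition \ref{prop:acyclic_res}) showing that the pullback $W_\bullet = \phi^{-1}\ell^\infty(K^{\bullet+1},V)$ of the standard resolution is a resolution of $\phi^{-1}V$ by boundedly $n$-acyclic $\Gamma$-modules --- the vanishing $H^i_b(\Gamma;W_j)=0$ being obtained from $\mathbf{I}_H^\Gamma W_j \cong W_{j+1}$, Shapiro, and Proposition \ref{prop:trivial_mod}. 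The paper's Scholium explicitly concedes that the spectral-sequence route works and is ``essentially not very different,'' but it also flags the reason the authors avoid it: the identification of the $E_2$-page as $H^p_b(K;H^q_b(H;-))$ is delicate because bounded cohomology groups need not be Hausdorff. Note that what your argument actually needs is not the full $E_2$-identification but only the vanishing of the $E_1$-entries $H^q_b(H;\ell^\infty(K^{p+1},V))$ for $1\le q\le n$ --- which is exactly the computation the paper performs --- so the two proofs really do collapse to the same calculation; the resolution argument just packages it without appealing to the convergence machinery of \cite[Chapter~12]{monod}. You also correctly isolate the key lemma (passing from $\R$ to trivial $\ell^\infty(S,\R)$ coefficients via exactness of $\ell^\infty(S,-)$); this is precisely Proposition \ref{prop:trivial_mod}, and your instinct that the restriction to $\R$-generated modules is indispensable there matches the paper's Remark \ref{W-bdd-acyclic2}.
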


The proof of Theorem~\ref{main:thm:intro} makes use of the properties of bounded cohomology purely from the viewpoint of homological algebra. 
More precisely, we introduce \emph{boundedly acyclic} resolutions, which are analogous to the classical notion of acyclic resolutions in ordinary cohomology.
The drawback of the purely algebraic viewpoint for bounded cohomology is that it neglects the metric structure of bounded cohomology. In particular, it seems likely that the statements in (3) and (4) of Theorem \ref{main:thm:intro} fail in general to characterize the maps $f \colon X \to Y$ for  which the induced restriction maps are \emph{isometric} isomorphisms in bounded cohomology. Still, it is reasonable to make full and unreserved use of this viewpoint and of the corresponding well-established methods of homological (and homotopical) algebra, especially, given that relatively few calculations of bounded cohomology groups are actually known. We hope that working with boundedly ($n$-)acyclic maps and the characterizations in Theorem~\ref{main:thm:intro} can lead to new and interesting examples of bounded cohomology equivalences outside situations which involve amenable groups. Note that restriction maps are a special case of inflation maps (Remark~\ref{rem:inflation:vs:pullback}); this also clarifies the connection between Theorem~\ref{main:thm:intro:amenable} and Theorem~\ref{main:thm:intro}. 

Using the \emph{Mapping Theorem}, the proof of Theorem~\ref{main:thm:intro} will again be deduced from an analogous statement which characterizes the homomorphisms $\phi\colon \Gamma \to K$ of discrete groups such that the restriction map 
$$H^{\bullet}_b(\phi; V) \colon H^\bullet_b(K; V) \to H^\bullet_b(\Gamma; \phi^{-1}V)$$
is an isomorphism for $i \leq n$ and injective for $i = n + 1$, for every $\R$-generated $K$-module $V$ (Theorem \ref{thm:main:groups}). 
Similarly to Theorem \ref{main:thm:intro} (4), these turn out to be exactly the surjective homomorphisms $\phi \colon \Gamma \to K$ whose kernel $H$ is \emph{boundedly $n$-acyclic}, that is, $H^i_b(H; \R) = 0$ for $1 \leq i \leq n$.  Our indexing convention in the definition of boundedly $n$-acyclic maps/homomorphisms is made to agree with the vanishing range of the bounded cohomology of the respective homotopy fiber/kernel. 
%The reader might now be wondering whether there is a relation between the items in Theorem~\ref{main:thm:intro:amenable} and the ones in Theorem~\ref{main:thm:intro}.

Some known examples of boundedly $n$-acyclic groups are the amenable groups (by the \emph{Mapping Theorem}), the group of compactly supported homeomorphisms of $\mathbb{R}^n$~\cite{Matsu-Mor} and the mitotic groups \cite{Loeh:dim} (for $n = \infty$), as well as lattices in higher rank Lie groups~\cite{Burger_Monod_99} and groups with dynamical properties~\cite{fflodha} (known for finite $n$). The study of bounded acyclicity has attracted renewed interest recently, and many new examples have been discovered. The results of Matsumoto--Morita \cite{Matsu-Mor} and L\"oh \cite{Loeh:dim} were generalized and unified in the proof that \emph{all} binate groups are boundedly acyclic~\cite{fflm2}. Moreover, finitely presented non-amenable boundedly acyclic groups have also been constructed~\cite{fflm1}. Furthermore, Monod \cite{monod:thompson} recently proved the boundedly acyclicity of Thompson's group $F$ and of \emph{lamplighter groups}, and Monod--Nariman \cite{monodnariman} showed examples of homeomorphism and diffeomorphism groups which are boundedly acyclic. It is worth mentioning that these new examples of boundedly acyclic groups, together with the study of bounded acyclicity in general, also led to the first computations of the full bounded cohomology rings of certain groups, including Thompson's group $T$~\cite{fflm2, monod:thompson} and 
the group of orientation-preserving homemorphisms of $S^1$~\cite{monodnariman}.

In \textbf{Section \ref{sec:proofs}},  we prove Theorem~\ref{thm:main:groups} and then deduce Theorem~\ref{main:thm:intro}. Moreover, we use these characterizations to discuss the closure properties of the class of boundedly $n$-acyclic groups and the stability properties of boundedly $n$-acyclic maps under various standard homotopy-theoretic operations. Finally, we discuss an application to the vanishing of the relative simplicial volume of an oriented compact connected $n$-manifold $(M, \partial M)$, assuming that the inclusion $\partial M \to M$ is boundedly $(n-1)$-acyclic. 

%%%%%%%%%%%%%%

\medskip

%%%%%
\noindent \textbf{Acknowledgements.} We thank Clara L\"oh for her helpful and interesting comments on this work. We also thank Francesco Fournier-Facio for useful discussions about boundedly acyclic groups and for his comments on an earlier version of the paper. We are grateful to the anonymous referees for their careful reading and detailed comments. % and suggestions which helped to improve the exposition of this paper. 

This work was partially supported by \emph{SFB 1085 -- Higher Invariants} (University of Regensburg) funded by the DFG. 

%%%%%%%%%%%%%%%%%%%%%%%%%%%%%%%%%%%%%%%%%%%%%%%%%%%%%%%%%%%

\section{Preliminaries}\label{sec:bdd:cohomology}

\subsection{Bounded cohomology}\label{sec:bc}
We briefly recall the definition and the basic properties of bounded cohomology of groups and spaces. We refer the reader to
Gromov~\cite{vbc}, Ivanov~\cite{ivanov, Ivanov17}, Frigerio~\cite{Frigerio:book}, L\"{o}h~\cite{clara:book} and Monod~\cite{monod}, for 
detailed expositions of the theory. 

\medskip

Let $\Gamma$ be a discrete group. An $\mathbb{R}[\Gamma]$-\emph{module} is a \emph{normed} $\R$-module $V \coloneqq (V, \sv{\cdot}_V)$ equipped with an action of $\Gamma$ by $\R$-linear isometries. Similarly, a Banach $\Gamma$-module is a \emph{complete} $\mathbb{R}[\Gamma]$-module. We say that $V$ is a \emph{trivial $\R[\Gamma]$-module} if the group $\Gamma$ acts trivially on $V$. 

\medskip
 
Let $X \coloneqq (X, x)$ be a based path-connected topological space and let $\Gamma \coloneqq  \pi_1(X, x)$ denote its fundamental group. \emph{We restrict throughout this paper to based topological spaces $X$ equipped with a universal covering} $p \colon 
\widetilde{X} \to X$ \emph{and a basepoint} $\widetilde{x} \in p^{-1}(x)$. The group $\Gamma$ acts continuously on $\widetilde{X}$ by deck transformations. We consider the real singular chain complex 
$C_\bullet(\widetilde{X}; \R)$ as a normed $\mathbb{R}[\Gamma]$-module endowed with the $\ell^1$-norm. We recall that the $\ell^1$-\emph{norm} of a singular reduced $\R$-chain $c = \sum_{i = 1}^k \alpha_i \sigma_i \in \, C_\bullet(\widetilde{X}; \R)$ is defined by $\sv{c}_1 = \sum_{i = 1}^k |\alpha_i|$. 
Here a chain is said to be in \emph{reduced form} if $\sigma_i \neq \sigma_j$ for all $i \neq j$.

Let $V$ be  a normed $\R[\Gamma]$-module. Then, we can consider the normed $\mathbb{R}[\Gamma]$-module of bounded operators $\mathcal{B}(C_\bullet(\widetilde{X}; \R), V)$ (with the $\ell^{\infty}$-norm); this is equipped with an action of $\Gamma$ given by  
$$
(g \cdot f)(c) = g \cdot f (g^{-1} c),
$$
where $g \in \, \Gamma$, $f \in \, \mathcal{B}(C_\bullet(\widetilde{X}; \R), V)$ and $c \in \, C_\bullet(\widetilde{X}; \R)$. We then denote by $C_b^\bullet(X; V)$ the (normed) space of $\Gamma$-invariants $\mathcal{B}(C_\bullet(\widetilde{X}; \R), V)^\Gamma$ of $\mathcal{B}(C_\bullet(\widetilde{X}; \R), V)$. Since the coboundary operators $\delta^\bullet$ preserve $\Gamma$-invariant bounded cochains, we obtain the \emph{bounded cochain complex} $(C^\bullet_b(X; V), \delta^\bullet)$ \emph{of $X$ with coefficients in $V.$}

\begin{defi}[Bounded cohomology of spaces]
Let $X$, $\Gamma$ and $V$ be as above. The \emph{bounded cohomology of} $X$ \emph{with coefficients in $V$}, denoted by $H^\bullet_b(X; V),$ is the cohomology of the bounded cochain complex 
$(C^\bullet_b(X; V), \delta^\bullet)$.
\end{defi}

\begin{rem}[Seminorm on bounded cohomology]
The bounded cohomology groups are canonically endowed with a seminorm that is induced by the $\ell^\infty$-norm. 
We will say that an isomorphism in bounded cohomology is \emph{isometric} if it preserves the $\ell^\infty$-seminorm.
\end{rem}

Using this definition of bounded cohomology for spaces, it is possible to define the bounded cohomology $H^\bullet_b(\Gamma; V)$ of a discrete group $\Gamma$ to be the corresponding bounded cohomology of a model for the classifying space $B\Gamma$. This definition is independent of the choice of a model because bounded cohomology is homotopy invariant~\cite{Ivanov17}. But it might also be useful to recall the standard definition of bounded cohomology of groups using the \emph{standard resolution} (see, for example, \cite{Frigerio:book}) as this will be needed later. Let $\Gamma$ be a discrete group and let  $V$ be a normed $\mathbb{R}[\Gamma]$-module. We define 
$$C^\bullet(\Gamma; V) \coloneqq \{ f \colon \Gamma^{\bullet+1} \to V\}$$
to be the group of $n$-cochains on $\Gamma$ with coefficients in $V$; these groups form the standard resolution $(C^{\bullet}(\Gamma; V), \delta^{\bullet})$ that is used in the definition of the group cohomology of $\Gamma$ with coefficients in $V.$  As before, we restrict to the \emph{bounded cochains} $C_b^\bullet(\Gamma; V) \subset C^\bullet(\Gamma; V),$
$$C_b^\bullet(\Gamma; V) \coloneqq \{f \in \, C^\bullet(\Gamma; V) \, | \, \sv{f}_\infty < \infty\},$$
where $\sv{f}_\infty \coloneqq \sup_{g_0, \cdots, g_\bullet} \sv{f(g_0, \cdots,g_\bullet)}_V$ is the $\ell^{\infty}$-norm.
Finally, we denote by $$C_b^\bullet(\Gamma; V)^\Gamma \subset C_b^\bullet(\Gamma; V)$$
the $\Gamma$-invariant bounded functions; here, $C_b^\bullet(\Gamma; V)$
is endowed with the corresponding diagonal $\Gamma$-action:
$$
(g \cdot f )(g_0, \cdots, g_\bullet) \coloneqq g \cdot f(g^{-1}g_0, \cdots, g^{-1}g_\bullet).
$$
Note that the coboundary operators $\delta^{\bullet}$ of $C^{\bullet}(\Gamma; V)$ preserve $\Gamma$-invariant bounded cochains, so we obtain the \emph{bounded cochain complex} $(C^\bullet_b(\Gamma; V)^{\Gamma}, \delta^\bullet)$ 
\emph{of $\Gamma$ with coefficients in $V.$}

\begin{defi}[Bounded cohomology of groups]
Let $\Gamma$ and $V$ be as above. We define the \emph{bounded cohomology of $\Gamma$ with coefficients in $V$}, denoted by
$H_b^\bullet(\Gamma; V)$, to be the cohomology of the cochain complex of bounded $\Gamma$-invariant cochains
$(C_b^\bullet(\Gamma; V)^\Gamma, \delta^{\bullet})$.
\end{defi}

\begin{rem}[Dual normed modules]\label{rem:dual:modules}
Even though bounded cohomology with twisted coefficients is defined for \emph{arbitrary} normed $\R[\Gamma]$-modules $V$, 
it is common in the literature to restrict to Banach $\Gamma$-modules or further to \emph{dual} normed $\R[\Gamma]$-modules. We recall that a normed $\mathbb{R}[\Gamma]$-module $V$ is a \emph{dual} normed $\mathbb{R}[\Gamma]$-module if there exists a normed $\mathbb{R}[\Gamma]$-module $W$ such that $V$ is isomorphic to the topological dual of $W$ (as $\mathbb{R}[\Gamma]$-modules). Note that a dual normed $\R[\Gamma]$-module is always a Banach $\Gamma$-module. 

The restriction to dual normed modules is standard in the literature because it provides a convenient setting for many of the methods and techniques used in the theory in connection with actions by amenable groups \cite{Frigerio:book, Ivanov17, monod}. However, it would be useful to understand more precisely the  importance of the restriction to dual normed $\R[\Gamma]$-modules. For example, we do not know if the bounded cohomology of an amenable group $\Gamma$ vanishes for general normed \emph{trivial} $\R[\Gamma]$-modules. 
\end{rem}

%\mmcomm{Here for the definition, notation and definition later, we might consider directly dual normed modules or Banach, since the references that we have in the Notation are about Monod's book. The same will apply later for inflation. However, I didn't modify this part.}

\subsection{Restriction maps} Bounded cohomology  $H^{\bullet}_b(X; V)$ (and $H^{\bullet}_b(\Gamma; V)$) has several functoriality properties with respect to $X$ and $V.$ For a fixed $X$ (or $\Gamma$), bounded cohomology  $H^{\bullet}_b(X; -)$ (or $H^{\bullet}_b(\Gamma; -)$) is clearly (covariantly) functorial with respect to bounded linear $\Gamma$-maps, and defines a functor with values in the category of graded seminormed $\R$-modules and bounded linear maps. 

In this subsection, we recall some details about the functoriality of bounded cohomology in $X$ (and $\Gamma$) given by the \emph{restriction homomorphisms}.

\begin{defi}
Let $\Gamma$ and $K$ be discrete groups and let $\phi \colon \Gamma \to K$ be a homomorphism. For every 
normed $\mathbb{R}[K]$-module $V$, we define the \emph{pullback} (or restricted) normed $\mathbb{R}[\Gamma]$-module $\phi^{-1}V$
to be the normed module $V$ equipped with the 
$\Gamma$-action:
$$
g \cdot v = \phi(g) \cdot v 
$$
for every $g \in \, \Gamma$ and $v \in \, V$. Then precomposition with the homomorphism $\phi \colon \Gamma \to K$ induces the \emph{restriction map}:
$$
\phi^* \coloneqq H^\bullet_b(\phi; V) \colon H^\bullet_b(K; V) \to H^\bullet_b(\Gamma; \phi^{-1}V).
$$
\end{defi}

\begin{nota}\label{rem:restriction:map}
In the special case of an inclusion $\iota \colon H \to \Gamma$, following Monod's notation~\cite[Section~8.6]{monod}, we will write 
$$
\Res^\bullet \colon C^\bullet_b(\Gamma; V)^\Gamma \to C^\bullet_b(H; V)^H
$$
for the restriction map of cochain complexes, given by 
$$\Res^\bullet(\varphi)(h_0, \cdots, h_\bullet) \coloneqq \varphi(\iota(h_0), \cdots, \iota(h_\bullet));$$ 
note that $V$ denotes here both an $\R[\Gamma]$-module and the restricted $\R[H]$-module $\iota^{-1}V$. This map induces the corresponding restriction  map and will be denoted by
$$
\res^\bullet \colon H^\bullet_b(\Gamma; V) \to H^\bullet_b(H; V).
$$
Moreover, it is worth noticing that given a group extension
$$
1 \to H \xrightarrow{\iota} \Gamma \to K \to 1
$$
the restriction map $\res^\bullet \colon H^\bullet_b(\Gamma; V) \to H^\bullet_b(H; V)$ factors through the $K$-invariants $H^\bullet_b(H; V)^K$, where
$K$ acts by conjugation~\cite[Corollary~8.7.4]{monod}. 
\end{nota}

\begin{defi}
Let $f \colon X \to Y$ be a (based) map of based path-connected spaces, let $f_* \colon \pi_1(X) \to \pi_1(Y)$ denote the induced homomorphism, and let $\widetilde{f} \colon (\widetilde{X}, \widetilde{x}) \to (\widetilde{Y}, \widetilde{y})$ be the unique based lift of the map $f$. For every normed $\R[\pi_1(Y)]$-module $V$, precomposition with the $\pi_1$-equivariant map $\widetilde{f}$ induces the \emph{restriction map}:
$$
f^* \coloneqq H^\bullet_b(f; V) \colon H^\bullet_b(Y; V) \to H^\bullet_b(X; f_*^{-1}V).
$$
\end{defi}

The restriction homomorphisms in bounded cohomology are analogous to the corresponding homomorphisms in singular cohomology with local coefficients. In analogy with the class of acyclic maps in homotopy theory \cite{HH-acyclic, Raptis-acyclic}, there is a corresponding class of maps (or homomorphisms) which induce isomorphisms in bounded cohomology with respect to the restriction homomorphisms. However, certain subtle features of the category of normed or Banach $\Gamma$-modules require us to specialize the definition to suitable classes of coefficients. Recall that a set $S$ equipped with an action by a (discrete) group $\Gamma$ is called $\Gamma$-\emph{set}. Given a $\Gamma$-set $S$ and a Banach $\Gamma$-module $W$, the space of $W$-valued bounded functions $\ell^\infty(S, W)$ is a Banach $\Gamma$-module endowed with the action
$$
(g \cdot f)(s) = gf(g^{-1}s)
$$
for all $g \in \, \Gamma$, $s \in \, S$ and $f \in \, \ell^\infty(S, W)$.

\begin{defi}
Let $\Gamma$ be a discrete group. 
\begin{itemize}
\item[(a)] A Banach $\Gamma$-module $V$ is called \emph{$\R$-generated} if it is of the form $\ell^{\infty}(S, \R)$ for a $\Gamma$-set $S$. 
\item[(b)] Let $W$ be a Banach $\Gamma$-module. A \emph{$W$-generated Banach $\Gamma$-module} is a Banach $\Gamma$-module $V$ 
of the form $\ell^{\infty}(S, W)$ where $S$ is a $\Gamma$-set. 
\end{itemize}
\end{defi}

\begin{rem}\label{rem:generated:module:examples}
Every Banach $\Gamma$-module $W$ is also a $W$-generated Banach $\Gamma$-module. Moreover, given a group homomorphism 
$\phi \colon \Gamma \to K$ and a $W$-generated Banach $K$-module $V$, then the restricted $\R[\Gamma]$-module $\phi^{-1}V$ is a $\phi^{-1}W$-generated Banach $\Gamma$-module. Every $\R$-generated Banach $\Gamma$-module is also a \emph{dual} normed $\R[\Gamma]$-module. 
%Moreover, given an extension of discrete groups
%$$
%1 \to H \to \Gamma \xrightarrow{\phi} K \to 1
%$$
%and a $W$-generated Banach $K$-module $V$, the restricted $\R[\Gamma]$-module $\varphi^{-1}V$ is a $\varphi^{-1}W$-generated Banach 
%$\Gamma$-module. The same also works for $H$. 
\end{rem}

\begin{defi}[Boundedly $n$-acyclic maps/homomorphisms]\label{def:intro:bdd:cohom:equiv} Let $n \geq 0$ be an integer or $n=\infty$. 
\begin{itemize}
\item[(a)] A map $f \colon X \to Y$ of based path-connected spaces is \emph{boundedly $n$-acyclic} if the restriction map
$$
H^i_b(f; V) \colon H_b^i(Y; V) \to H_b^i(X; f_*^{-1}V)
$$
is an isomorphism for $i \leq n$ and injective for $i = n + 1$ for every $\R$-generated Banach $\pi_1(Y)$-module $V$. 
\item[(b)] A homomorphism $\phi \colon \Gamma \to K$ of discrete groups is \emph{boundedly $n$-acyclic} if the restriction map
$$
H^i_b(\phi; V) \colon H_b^i(K; V) \to H_b^i(\Gamma; \phi^{-1}V)
$$
is an isomorphism for $i \leq n$ and injective for $i = n+1$ for every $\R$-generated Banach $K$-module $V$. 
\item[(c)] We say that a map $f \colon X \to Y$ of based path-connected spaces (resp. a group homomorphism $\phi \colon \Gamma \to K$) is \emph{boundedly acyclic} if it is boundedly $\infty$-acyclic. 
\end{itemize}
\end{defi}

Note that the class of boundedly $n$-acyclic maps/homomorphisms is closed under composition. The following proposition shows the close connection between boundedly $n$-acyclic maps and boundedly $n$-acyclic homomorphisms. 

\begin{prop} \label{prop:compare_bdd_acyclic} Let $n \geq 0$ be an integer or $n = \infty$. 
A map $f \colon X \to Y$ of based path-connected spaces is boundedly $n$-acyclic if and only if the induced homomorphism $f_* \colon \pi_1(X) \to \pi_1(Y)$ is boundedly $n$-acyclic. 
\end{prop}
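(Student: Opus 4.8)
The plan is to reduce the statement for the map $f$ to the statement for the homomorphism $f_*$ by comparing $f$ with the induced map of classifying spaces $B f_* \colon B\pi_1(X) \to B\pi_1(Y)$, using the fact that classifying maps induce isometric isomorphisms on bounded cohomology. Recall that in this paper $H^\bullet_b(\Gamma; V)$ is defined as $H^\bullet_b(B\Gamma; V)$, and that under this identification the restriction map $H^\bullet_b(f_*; V)$ of the homomorphism $f_*$ agrees with the map $(Bf_*)^*$ induced on bounded cohomology by $Bf_*$. Since the boundedly $n$-acyclic condition for $f$ and for $f_*$ both range over the same class of $\R$-generated Banach $\pi_1(Y)$-modules $V$ (Definition~\ref{def:intro:bdd:cohom:equiv}), it suffices to identify $f^*$ and $(Bf_*)^*$ degreewise by isomorphisms that are natural in $V$.

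First I would fix classifying maps $c_X \colon X \to B\pi_1(X)$ and $c_Y \colon Y \to B\pi_1(Y)$, each inducing the identity on $\pi_1$. Applying the Mapping Theorem to $c_X$ -- whose induced $\pi_1$-map is an isomorphism, hence surjective with trivial (amenable) kernel $H = \{1\}$ -- shows that the restriction map $c_X^* \colon H^\bullet_b(B\pi_1(X); W) \to H^\bullet_b(X; W)$ is an isometric isomorphism for every dual normed $\R[\pi_1(X)]$-module $W$; here the inflation map of the Mapping Theorem specializes to the ordinary restriction because $W^{\{1\}} = W$ and $(c_X)_* = \id$. The same applies to $c_Y$. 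The point is that the coefficients we care about are covered: every $\R$-generated module $\ell^{\infty}(S, \R)$ is a dual normed module (being the dual of $\ell^1(S, \R)$), and $f_*^{-1}V$ remains a dual, $\R$-generated $\pi_1(X)$-module (Remark~\ref{rem:generated:module:examples}), so the Mapping Theorem applies to exactly these coefficients.

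Next I would assemble the naturality square. Since $B\pi_1(Y)$ is aspherical, the two maps $c_Y \circ f$ and $Bf_* \circ c_X$ from $X$ to $B\pi_1(Y)$ both induce $f_*$ on fundamental groups and are therefore homotopic. Homotopy invariance of bounded cohomology then gives $f^* \circ c_Y^* = c_X^* \circ (Bf_*)^*$ as maps $H^\bullet_b(B\pi_1(Y); V) \to H^\bullet_b(X; f_*^{-1}V)$. As $c_X^*$ and $c_Y^*$ are isomorphisms in every degree, $f^*$ is an isomorphism (respectively, injective) in degree $i$ if and only if $(Bf_*)^* = H^i_b(f_*; V)$ is, for each such $V$. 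Comparing the two halves of Definition~\ref{def:intro:bdd:cohom:equiv} then yields the desired equivalence.

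The main obstacle I expect is not any analytic estimate -- that content is entirely packaged in the Mapping Theorem -- but rather the careful bookkeeping of the two a priori distinct notions of restriction map (for a map of spaces versus for a homomorphism) and of the coefficient modules under $f_*^{-1}(-)$, together with verifying that these restriction maps are genuinely \emph{identified} by the natural isomorphisms $c_X^*$ and $c_Y^*$, rather than merely being abstractly isomorphic. A secondary point requiring care is the homotopy classification of maps into the aspherical space $B\pi_1(Y)$, which underlies the commutativity of the naturality square.
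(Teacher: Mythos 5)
Your proposal is correct and follows essentially the same route as the paper's proof: comparing $f$ with $B(f_*)$ via the classifying maps $c_X$, $c_Y$ and invoking the Mapping Theorem (with coefficients) to see that these induce isomorphisms on bounded cohomology for all $\R$-generated Banach modules. Your additional care about the homotopy commutativity of the square and about $\ell^\infty(S,\R)$ being a dual module (so the Mapping Theorem applies) only makes explicit what the paper leaves implicit.
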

\begin{proof}
Consider the following diagram of spaces 
\begin{equation*}\label{eq:diagram:main:thm}
\xymatrix{
X \ar[rr]^-{c_{X}} \ar[d]^-{f} && B\pi_1(X) \ar[d]^{Bf_*} \\
Y \ar[rr]_{c_Y} && B\pi_1(Y)
}
\end{equation*}
where the horizontal maps $c_{X}$ and $c_Y$ are the canonical maps to (some functorial model of) the classifying spaces of the respective fundamental groups. By the \emph{Mapping Theorem} (with coefficients) \cite{vbc, Ivanov17, FM:Grom, clara:book}, the restriction homomorphisms associated to the horizontal maps $c_{X}$ and $c_Y$ induce (isometric) isomorphisms in bounded cohomology 
$$H^\bullet_b(B\pi_1(-); V) \xrightarrow{\cong} H^\bullet_b(-; V)$$ for all $\R$-generated (or dual normed) Banach $\pi_1$-modules $V$. Note that since the classifying maps are $\pi_1$-isomorphisms, $V$ is isomorphic as $\R[\pi_1]$-module to its pullback.
So, $f$ is boundedly $n$-acyclic if and only if $Bf_*$ is boundedly $n$-acyclic, and therefore the required result follows. 
\end{proof}

\begin{rem}\label{rem:2:out:3:boundedly:acyclic}
The class of boundedly $n$-acyclic maps/homomorphisms is closed under composition, but it does not satisfy the 2-out-of-3 property: there are composable homomorphisms $\Gamma \xrightarrow{\phi} K \xrightarrow{\phi'} L$ such that $\phi'$ and $\phi'\phi$ are boundedly $n$-acyclic but $\phi$ is not (e.g., for $\Gamma$ and $L$ 
the trivial group and $K$ a non-trivial amenable group -- note the surjectivity assumption in Theorem~\ref{thm:main:groups}). On the other hand, $\phi'$ is boundedly $n$-acyclic if both $\phi$ and $\phi' \phi$ are boundedly $n$-acyclic.  
\end{rem}

\begin{rem} \label{W-bdd-acyclic}
There are analogous definitions in the case of $W$-generated Banach modules. Let $n \geq 0$ be an integer or $n=\infty$, let $f \colon X \to Y$ be a map of based path-connected spaces and let $W$ be a Banach $\pi_1(Y)$-module. The map $f$ is \emph{$\langle W \rangle$-boundedly $n$-acyclic} if the restriction map
$$
H^i_b(f; V) \colon H_b^i(Y; V) \to H_b^i(X; f_*^{-1}V)
$$
is an isomorphism for $i \leq n$ and injective for $i = n + 1$ for every $W$-generated Banach $\pi_1(Y)$-module $V$. Definition \ref{def:intro:bdd:cohom:equiv} 
corresponds to the case $W = \R$. We do not know if a boundedly acyclic map is also $\langle W \rangle$-boundedly acyclic for general $W$. 
\end{rem}

%\mmcomm{Here I would remove the parenthesis for "dual" in the definition of induction module, since we will always apply it to dual modules...}

\subsection{Induction modules} 
We recall the analogue of the Eckmann--Shapiro lemma for bounded cohomology~(see~\cite[Section~10.1]{monod} and Remark~\ref{rem:coeff:module}). 
This identifies the bounded cohomology of a subgroup $H \leq \Gamma$ with the bounded cohomology of $\Gamma$ with coefficients in the induction module. 

\begin{defi}[Induction module]\label{def:induction:module}
Let $\Gamma$ be a discrete group and let $H \leq \Gamma$ be a subgroup. Let $V$ be a (dual) normed $\mathbb{R}[H]$-module. 
The \emph{induction module} $\textbf{I}_H^{\Gamma} V$ is the (dual) normed $\R[\Gamma]$-module $\ell^\infty(\Gamma, V)^H$ of $H$-invariant bounded 
functions $f \colon \Gamma \to V$; the $\Gamma$-action on $\textbf{I}_H^{\Gamma} V$ is given by right translation, i.e.,
$(g \cdot f)(x) = f(x g)$ for $g, x \in \, \Gamma$.
\end{defi}

\begin{rem}\label{rem:induction:module:action:Monod}
Suppose that $V$ is a dual normed $\R[\Gamma]$-module, instead of just a dual normed $\R[H]$-module. 
Then, the dual normed $\R[\Gamma]$-module $\textbf{I}_H^{\Gamma} V$ is isomorphic
to $\ell^\infty(\Gamma/H, V)$ endowed with the following $\Gamma$-action:
$$
(g \cdot f)(k) = g f(g^{-1} k) 
$$
for all $k \in \, \Gamma/H$, $g \in \, \Gamma$ and $f \in \, \ell^\infty(\Gamma/H, V)$~\cite[Remark~10.1.2(v)]{monod}.
In particular, $\textbf{I}_H^{\Gamma} V$ is a $V$-generated Banach $\Gamma$-module.
\end{rem}

\begin{prop}[Eckmann--Shapiro lemma~{\cite[Proposition~10.1.3]{monod}}] \label{prop:shapiro1}
Let $\Gamma$ be a discrete group and let $H \leq \Gamma$ be a subgroup. For every dual normed $\R[H]$-module $V$, there is a canonical isometric isomorphism: $$\textbf{i}^\bullet \colon H^\bullet_b(H; V) \xrightarrow{\cong} H^\bullet_b(\Gamma; \textbf{I}_H^{\Gamma} V).$$
\end{prop}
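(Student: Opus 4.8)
The plan is to realize $\textbf{i}^\bullet$ as the map induced by an \emph{isometric} isomorphism of cochain complexes, so that the statement follows immediately by passing to cohomology (which automatically respects the $\ell^\infty$-seminorm). The conceptual content is Frobenius reciprocity: taking $\Gamma$-invariants of the coinduced module $\textbf{I}_H^{\Gamma} V$ recovers the $H$-invariants of $V$, naturally and isometrically. Concretely, for any normed $\R[H]$-module $W$, a $\Gamma$-invariant element of $\textbf{I}_H^{\Gamma} W = \ell^\infty(\Gamma, W)^H$ is a function $f \colon \Gamma \to W$ that is invariant under right $\Gamma$-translation (hence constant) and $H$-equivariant on the left; such an $f$ is the constant function at a point of $W^H$, and $f \mapsto f(e)$ gives an isometric, natural isomorphism $(\textbf{I}_H^{\Gamma} W)^\Gamma \xrightarrow{\cong} W^H$.

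First I would apply this computation degreewise to the homogeneous standard resolution. A $\Gamma$-invariant bounded cochain $F \colon \Gamma^{n+1} \to \textbf{I}_H^{\Gamma} V$ is completely determined by the function $\psi(g_0, \dots, g_n) \coloneqq F(g_0, \dots, g_n)(e) \in V$ through the formula $F(g_0, \dots, g_n)(x) = \psi(x g_0, \dots, x g_n)$, which is forced by $\Gamma$-invariance upon taking $g = x^{-1}$. The two constraints on $F$ — its $\Gamma$-invariance and the $H$-invariance of each value $F(\vec g) \in \ell^\infty(\Gamma, V)^H$ — then collapse to the single requirement that $\psi \colon \Gamma^{n+1} \to V$ be bounded and $H$-equivariant, $\psi(h \vec g) = h \cdot \psi(\vec g)$. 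Tracking norms gives $\sv{F}_\infty = \sv{\psi}_\infty$, and a direct check shows the assignment commutes with the coboundary operators, yielding an isometric isomorphism of cochain complexes
$$
C_b^\bullet(\Gamma; \textbf{I}_H^{\Gamma} V)^\Gamma \;\cong\; \ell^\infty(\Gamma^{\bullet+1}, V)^H,
$$
where the right-hand side carries the usual bar differential and the diagonal $H$-action induced by left translation on $\Gamma$. By definition, the left-hand complex computes $H^\bullet_b(\Gamma; \textbf{I}_H^{\Gamma} V)$.

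It then remains to identify the cohomology of $\ell^\infty(\Gamma^{\bullet+1}, V)^H$ with $H^\bullet_b(H; V)$. For this I would argue that the augmented complex $V \hookrightarrow \ell^\infty(\Gamma^{\bullet+1}, V)$ is a \emph{strong relatively injective resolution} of $V$ as an $\R[H]$-module, so that by the fundamental lemma of homological algebra for bounded cohomology (see \cite{monod, Frigerio:book}) its complex of $H$-invariants computes $H^\bullet_b(H; V)$. Strongness is transparent: fixing a basepoint of $\Gamma$ supplies the standard bar contracting homotopy, which is $\R$-linear of norm $\leq 1$. Relative injectivity uses that $H$ acts freely on $\Gamma$, hence freely and (after choosing coset representatives) diagonally on each $\Gamma^{n+1}$; writing $\Gamma^{n+1} \cong H \times T_n$ as $H$-sets identifies $\ell^\infty(\Gamma^{n+1}, V) \cong \ell^\infty(T_n, \ell^\infty(H, V))$ as a coinduced, hence relatively injective, $\R[H]$-module. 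Combining the two displays produces the canonical isometric isomorphism $\textbf{i}^\bullet$.

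I expect the main obstacle to be precisely this last step — verifying that $\ell^\infty(\Gamma^{\bullet+1}, V)$ is relatively injective over $\R[H]$ with isometric bookkeeping — together with the need to be scrupulous about the left/right conventions for the $H$- and $\Gamma$-actions throughout the identification of invariants. These conventions are exactly what must be pinned down so that the norms match on the nose and the resulting isomorphism is genuinely isometric rather than merely bi-Lipschitz; the remaining manipulations are routine computations with bounded functions.
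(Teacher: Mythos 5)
Your proposal is correct and is essentially the standard argument: the paper itself offers no proof of this proposition but simply cites Monod~\cite[Proposition~10.1.3]{monod}, and the proof given there (specialized to discrete groups) proceeds exactly as you describe, by identifying $C_b^\bullet(\Gamma; \textbf{I}_H^{\Gamma} V)^\Gamma$ isometrically with the $H$-invariants of the strong relatively injective resolution $\ell^\infty(\Gamma^{\bullet+1}, V)$ of $V$ over $\R[H]$. The points you flag as needing care (relative injectivity over the subgroup via freeness of the $H$-action on $\Gamma^{n+1}$, and the isometric comparison of resolutions) are indeed the only nontrivial inputs, and both are handled by the cited standard machinery.
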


The construction of the induction module has several naturality properties (see \cite[Proposition~10.1.5]{monod}). Given a diagram of homomorphisms between discrete groups
\begin{equation} \tag{$\star$} \label{squares}
\xymatrix{
H \ar@{}[r]|-*[@]{\subset} \ar[d]_{\phi |_H} & \Gamma \ar[d]^{\phi} \\
H' \ar@{}[r]|-*[@]{\subset}  & K 
}
\end{equation}
where the horizontal maps are inclusions of subgroups, and given a dual normed $\mathbb{R}[H']$-module $V$, then there is a natural transformation of dual normed $\R[\Gamma]$-modules
$$
\Psi \colon \phi^{-1} \mathbf{I}_{H'}^K V \to \mathbf{I}_{H}^\Gamma {(\phi |_H)}^{-1} V, \ \ f \to f \circ \phi,
$$
where $f \in \, \ell^\infty(K, V)^{H'}$. We record the following special case of $\Psi$ for later use.

\begin{prop}\label{prop:coeff:isomorphic}
Let $\phi \colon \Gamma \to K$ be a surjective homomorphism of discrete groups and let $H$ denote its kernel. 
Assume that $V$ is a dual normed $\mathbb{R}$-module. Then the map
% of dual normed $\mathbb{R}[\Gamma]$-modules 
 $$
 \Psi \colon \phi^{-1} \mathbf{I}_{\{1\}}^K V \to \mathbf{I}_{H}^\Gamma {(\phi |_H)}^{-1} V 
 $$
is an isometric isomorphism of $V$-generated Banach $\Gamma$-modules.
\end{prop}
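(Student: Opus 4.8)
The plan is to unwind both sides explicitly and then verify directly that $\Psi$ is a bijective, isometric, $\Gamma$-equivariant map. First I would identify the two modules concretely. Since we take $H' = \{1\}$ and $H = \ker\phi$ in the diagram~\eqref{squares}, the source $\phi^{-1}\textbf{I}_{\{1\}}^K V$ is the Banach space $\textbf{I}_{\{1\}}^K V = \ell^\infty(K,V)$ with $\Gamma$ acting through $\phi$ by right translation, $(g\cdot f)(k) = f(k\phi(g))$. For the target, note that $\phi|_H$ is the trivial homomorphism $H \to \{1\}$, so $(\phi|_H)^{-1}V$ is $V$ equipped with the trivial $H$-action; hence $\textbf{I}_H^\Gamma(\phi|_H)^{-1}V = \ell^\infty(\Gamma,V)^H$ is exactly the space of bounded functions $F\colon\Gamma\to V$ with $F(hx)=F(x)$ for all $h\in H$, $x\in\Gamma$ --- the bounded functions that are constant on the cosets in $H\backslash\Gamma = \Gamma/H$ --- with $\Gamma$ acting by right translation $(g\cdot F)(x)=F(xg)$.

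Next I would analyse $\Psi(f) = f\circ\phi$. A quick check shows that $f\circ\phi$ lands in the target, since $(f\circ\phi)(hx)=f(\phi(h)\phi(x))=f(\phi(x))$ for $h\in H$. The decisive input is that $\phi$ is surjective with kernel $H$, so the first isomorphism theorem gives a bijection $\Gamma/H\xrightarrow{\cong}K$; therefore precomposition with $\phi$ is a bijection between $V$-valued bounded functions on $K$ and $H$-invariant $V$-valued bounded functions on $\Gamma$, whose inverse sends $F$ to the function $k\mapsto F(x)$ for any $x$ with $\phi(x)=k$ (well defined by the $H$-invariance of $F$ and the surjectivity of $\phi$). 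This yields both injectivity and surjectivity of $\Psi$ at once.

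For the metric claim, surjectivity of $\phi$ makes $\phi(x)$ range over all of $K$, so $\sv{f\circ\phi}_\infty = \sup_{x\in\Gamma}\sv{f(\phi(x))}_V = \sup_{k\in K}\sv{f(k)}_V = \sv{f}_\infty$, and $\Psi$ is isometric. Equivariance is then a direct comparison of the two right translations through $\phi$: for $g\in\Gamma$ and $x\in\Gamma$,
$$\Psi(g\cdot f)(x) = (g\cdot f)(\phi(x)) = f(\phi(x)\phi(g)) = f(\phi(xg)) = \Psi(f)(xg) = (g\cdot\Psi(f))(x).$$
Finally, that both modules are $V$-generated Banach $\Gamma$-modules is immediate: the target is such by Remark~\ref{rem:induction:module:action:Monod}, and the source is a pullback of the $V$-generated $K$-module $\ell^\infty(K,V)$, hence $V$-generated by Remark~\ref{rem:generated:module:examples} (with $V$ carrying the trivial action, so that $\phi^{-1}V = V$).

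The argument is essentially a definition-chase, and there is no serious obstacle beyond careful bookkeeping. The only genuine content --- and the single point at which all the hypotheses are used --- is that surjectivity of $\phi$ together with $H = \ker\phi$ turns ``precompose with $\phi$'' into a norm-preserving bijection via the identification $\Gamma/H\cong K$. The one place demanding care is to keep the two right-translation actions aligned through $\phi$ and not to swap a left convention for a right one.
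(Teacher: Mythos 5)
Your proposal is correct and is essentially the paper's own proof written out in full: the paper also identifies $\Psi$ with the canonical map $\ell^{\infty}(K,V) \to \ell^{\infty}(\Gamma,V)^{H}$, $f \mapsto f\circ\phi$, and notes that everything reduces to $\phi$ inducing a bijection $\Gamma/H \cong K$ together with the triviality of the action on $V$. The only difference is that you carry out the definition-chase (invariance, bijectivity, isometry, equivariance) explicitly where the paper says ``unwinding the definitions.''
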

\begin{proof} Unwinding the definitions, the comparison map $\Psi$ is identified in this case with the canonical isometric isomorphism of dual normed 
$\R[\Gamma]$-modules $\ell^{\infty}(K, V) \xrightarrow{\cong} \ell^{\infty}(\Gamma, V)^H, \  f \mapsto f \circ \phi$, where $V$ is endowed with the trivial action.
\end{proof}

The isomorphism of Proposition \ref{prop:shapiro1} is natural with respect to diagrams~\eqref{squares} and restriction homomorphisms:

\begin{prop}[{see~\cite[Proposition~10.1.5 (iv)]{monod}}]\label{prop:shapiro2}
Consider a diagram of group homomorphisms 
$$
\xymatrix{
H \ar@{}[r]|-*[@]{\subset} \ar[d]_{\phi |_H} & \Gamma \ar[d]^{\phi} \\
H' \ar@{}[r]|-*[@]{\subset}  & K 
}
$$
where the horizontal maps are inclusions of subgroups and let $V$ be a dual normed $\mathbb{R}[H']$-module. Then the following diagram commutes:
$$
\xymatrix@C=1em{
H^\bullet_b(H'; V) \ar[rr]^{\cong}_{\textbf{i}^\bullet} \ar[dd]_-{(\phi |_H)^*} && H^\bullet_b(K; \mathbf{I}_{H'}^{K} V) \ar[rrd]^-{\phi^*} \\
&&&& H^\bullet_b(\Gamma; \phi^{-1} \mathbf{I}_{H'}^{K} V) \ar[dll]^-{H^\bullet_b(\textup{id}_\Gamma; \Psi)} \\
H^\bullet_b(H; (\phi |_H)^{-1} V) \ar[rr]^(.45){\cong}_(.45){\textbf{i}^\bullet} && H^\bullet_b(\Gamma; \mathbf{I}_H^\Gamma {(\phi |_H)}^{-1} V).
}
$$
\end{prop}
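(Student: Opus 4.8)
The plan is to reduce the whole statement to a strict commutativity of explicit maps of cochain complexes; since every arrow in the diagram is induced by a cochain-level map, commutativity there immediately yields commutativity in bounded cohomology. This is, in essence, \cite[Proposition~10.1.5(iv)]{monod}, and the proof amounts to unwinding the relevant cochain formulas together with the definition of the comparison morphism $\Psi$.

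First I would recall the explicit cochain description of the Shapiro isomorphism of Proposition~\ref{prop:shapiro1}. For a subgroup $H \leq \Gamma$ and a dual normed $\R[H]$-module $V$, the inverse of $\textbf{i}^\bullet$ is induced by the cochain map
$$\alpha_\Gamma \colon C^\bullet_b(\Gamma; \mathbf{I}_H^\Gamma V)^\Gamma \to C^\bullet_b(H; V)^H, \qquad \alpha_\Gamma(F)(h_0, \dots, h_\bullet) = F(h_0, \dots, h_\bullet)(1_\Gamma),$$
that is, restriction of the arguments to $H$ followed by evaluation of the $\mathbf{I}_H^\Gamma V = \ell^\infty(\Gamma, V)^H$-valued cochain at the identity $1_\Gamma \in \Gamma$. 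A direct check, using the $\Gamma$-equivariance of $F$ (by right translation) together with the $H$-invariance $f(hx) = h \cdot f(x)$ of the functions in $\mathbf{I}_H^\Gamma V$, shows that $\alpha_\Gamma(F)$ is $H$-equivariant and bounded and commutes with the coboundary; this is the standard cochain model of Shapiro's lemma. Since $\textbf{i}^\bullet$ is an isomorphism, it then suffices to prove that the diagram obtained by replacing both copies of $\textbf{i}^\bullet$ by the maps $\alpha_K$ and $\alpha_\Gamma$ in the reversed direction commutes; the original diagram follows by composing with the relevant inverses.

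Next I would evaluate both composites of this reversed square on a $K$-equivariant bounded cochain $F \colon K^{\bullet + 1} \to \mathbf{I}_{H'}^K V$. Applying $\alpha_K$ and then the restriction $(\phi|_H)^*$ (which precomposes the arguments with $\phi$) yields the $H$-cochain $(h_0, \dots, h_\bullet) \mapsto F(\phi(h_0), \dots, \phi(h_\bullet))(1_K)$. On the other hand, applying $\phi^*$, then $H^\bullet_b(\textup{id}_\Gamma; \Psi)$ — which postcomposes in the coefficient variable with $\Psi(f) = f \circ \phi$ — and finally $\alpha_\Gamma$ yields $(h_0, \dots, h_\bullet) \mapsto \bigl(F(\phi(h_0), \dots, \phi(h_\bullet)) \circ \phi\bigr)(1_\Gamma) = F(\phi(h_0), \dots, \phi(h_\bullet))(1_K)$, where the final equality uses only that $\phi(1_\Gamma) = 1_K$. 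Hence the two composites coincide on the nose.

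The computation is routine once the conventions are fixed, so the only genuine obstacle is bookkeeping: one must ensure that the cochain model of $\textbf{i}^\bullet$, the left- versus right-translation conventions for the $\Gamma$- and $H$-actions on the induction module, and the direction of precomposition in $\phi^*$ and in $\Psi$ are all set up consistently, so that it is precisely the single identity $\phi(1_\Gamma) = 1_K$ that makes the two evaluations-at-the-identity agree. With these conventions pinned down as in Section~\ref{sec:bdd:cohomology}, the reversed square commutes strictly at the level of cochains, and therefore the square of Proposition~\ref{prop:shapiro2} commutes in bounded cohomology.
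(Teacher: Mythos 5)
The paper gives no proof of this proposition at all---it is quoted directly from Monod [Proposition~10.1.5(iv)]---and your cochain-level verification is precisely the standard argument behind that citation, so your proposal is correct and essentially the same as the (external) proof being invoked. The only point worth flagging is that you should justify that the evaluation-at-identity map $\alpha_\Gamma$ really does induce the \emph{same} isomorphism as the paper's $\textbf{i}^\bullet$ (not merely some isomorphism); this follows from the uniqueness, up to $\Gamma$-homotopy, of morphisms between strong relatively injective resolutions, and with that remark your computation using $\phi(1_\Gamma)=1_K$ closes the argument.
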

\begin{rem}\label{rem:all:coeff:shapiro:are:generated}
Notice that if $V$ is a trivial dual normed $\R[H']$-module, then all the coefficients appearing in the previous diagram are $V$-generated Banach modules (see Remark~\ref{rem:generated:module:examples} and Proposition~\ref{prop:coeff:isomorphic}).
\end{rem}
\begin{rem}\label{rem:coeff:module}
All the previous statements can be found in the work of Monod~\cite{monod} in the case of a \emph{second countable} locally compact group $\Gamma$ and a coefficient $\Gamma$-module $V$, i.e. a dual Banach $\Gamma$-module that is the topological dual of a \emph{separable} Banach $\Gamma$-module. However, in the case of discrete groups both the condition that $\Gamma$ is countable as well as the separability of the predual of $V$ can be omitted. Indeed, in the discrete case, the theory of amenable actions and of strong resolutions via relatively injective modules generally becomes simpler and can be applied without additional assumptions. See~\cite[Section~4.9]{Frigerio:book} and~\cite[Section~2.5]{monod} for more details.
\end{rem}
%\mmcomm{Here I would go for dual modules, because again we often refer to Monod's book}

\subsection{Inflation maps} Bounded cohomology $H^*_b(X; V)$ (or $H^*_b(\Gamma; V)$) has additional functoriality properties 
in $X$ (or $\Gamma$) given by the \emph{inflation homomorphisms}. These provide a refinement of the restriction homomorphisms of the previous subsection. In this subsection, we recall briefly some details about the functoriality of bounded cohomology given by these maps.

\begin{defi}
Let $\Gamma$ be a discrete group and let $H \unlhd \Gamma$ be a normal subgroup. 
Given a normed $\mathbb{R}[\Gamma]$-module $V$, the normed module $V^H$ of $H$-fixed points inherits the structure of an $\R[\Gamma]$-module from $V$; moreover, this descends to an $\mathbb{R}[\Gamma \slash H]$-module. The inclusion $\textup{I}_V \colon V^H \to V$ is a map of $\R[\Gamma]$-modules. The restriction along the quotient homomorphism $\phi \colon \Gamma \to K \coloneqq \Gamma \slash H$ together with the map $\textup{I}_V \colon V^H \to V$ of $\R[\Gamma]$-modules induce the \emph{inflation map}: 
$$
\phi^{\#} \coloneqq H^\bullet_b(\phi; \textup{I}_V) \colon H^\bullet_b(K; V^H) \to H^\bullet_b(\Gamma; V) .
$$
\end{defi}

\begin{rem}\label{rem:inflation:vs:pullback}
Given a surjective homomorphism $\phi \colon \Gamma \to \Gamma \slash H$ and a normed $\mathbb{R}[\Gamma \slash H]$-module $V$, the inflation map $H^\bullet_b(\phi; \textup{I}_{\phi^{-1}V})$ agrees with the restriction map $H^\bullet_b(\phi; V)$. 
\end{rem}

Inflation and restriction are related in low degrees via a useful exact sequence shown in \cite[Section~12.4]{monod}.  We will only need the following weak form of this exact sequence. 
 
\begin{prop}\label{prop:sequence:restriction:inflation}
Let $\phi \colon \Gamma \to K$ be a surjective homomorphism of discrete groups with kernel $H$ and let $V$ be a Banach $\Gamma$-module. Then, the following sequence is exact:
$$
0 \to H^1_b(K; V^H) \xrightarrow{H^1_b(\phi; \textup{I}_V)} H^1_b(\Gamma; V) \xrightarrow{\res^1} H^1_b(H; V)
$$
\end{prop}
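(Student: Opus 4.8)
The plan is to avoid any spectral-sequence machinery and instead compute directly with degree-one cocycles, exploiting the concrete bar-resolution description of $H^1_b$. Recall that $H^1_b(\Gamma; V)$ is the quotient of the space of \emph{bounded crossed homomorphisms} $b \colon \Gamma \to V$ (bounded functions satisfying $b(g_1 g_2) = b(g_1) + g_1 \cdot b(g_2)$) by the subspace of \emph{principal} ones $g \mapsto g \cdot v - v$ (automatically bounded, of norm at most $2 \sv{v}_V$); the same description applies to $K$ and to $H$. Under this identification, the inflation map $H^1_b(\phi; \textup{I}_V)$ sends the class of a bounded crossed homomorphism $c \colon K \to V^H$ to the class of $g \mapsto c(\phi(g))$, viewed in $V$ via $\textup{I}_V$, while $\res^1$ sends $[b]$ to $[b|_H]$. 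First I would record two elementary facts used repeatedly: a crossed homomorphism always vanishes at the identity, and for $v \in V^H$ the element $g \cdot v$ again lies in $V^H$ and depends only on $\phi(g)$, so that it coincides with the $K$-action of $\phi(g)$ on $v$ (here normality of $H$ enters through $h \cdot (g v) = g (g^{-1} h g) v = g v$).

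Next I would dispatch the two easy assertions. For $\res^1 \circ H^1_b(\phi; \textup{I}_V) = 0$, the inflation of $c$ restricts on $H$ to $h \mapsto c(\phi(h)) = c(1) = 0$, so the composite vanishes and the image of inflation lies in the kernel of $\res^1$. For injectivity of inflation, suppose $g \mapsto c(\phi(g))$ is principal in $V$, say equal to $g \mapsto g v - v$. Evaluating on $H$ gives $h v - v = c(1) = 0$ for all $h \in H$, hence $v \in V^H$; then $c(\phi(g)) = g v - v = \phi(g) \cdot v - v$ by the second fact above, exhibiting $c$ as principal already in $V^H$, so $[c] = 0$.

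The main work, and the step I expect to be the only real obstacle, is exactness in the middle, i.e.\ that a class $[b]$ with $\res^1[b] = 0$ lies in the image of inflation. Here $b|_H$ is principal, say $b(h) = h v_0 - v_0$ for $h \in H$; subtracting the principal coboundary $g \mapsto g v_0 - v_0$ replaces $b$ by a cohomologous bounded crossed homomorphism $b'$ with $b'|_H = 0$. The crossed-homomorphism identity then gives $b'(g h) = b'(g) + g \cdot b'(h) = b'(g)$, so $b'$ is constant on right $H$-cosets and therefore factors as $b' = c \circ \phi$ for a bounded $c \colon K \to V$. To see that $c$ takes values in $V^H$ I would compute $b'(hg)$ in two ways: directly it equals $h \cdot b'(g)$, while writing $hg = g(g^{-1}hg)$ with $g^{-1} h g \in H$ (normality) it equals $b'(g)$; hence $h \cdot b'(g) = b'(g)$, i.e.\ $b'(g) \in V^H$. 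A final check, using the second fact of the first paragraph, shows that $c$ is a crossed homomorphism for $K$ with values in $V^H$, and by construction its inflation is exactly $b'$, which is cohomologous to $b$. Thus $[b] = H^1_b(\phi; \textup{I}_V)[c]$.

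The only subtlety, beyond keeping track of conventions, is ensuring that normality of $H$ is invoked at precisely the two points where it is needed (that $V^H$ is a $\Gamma$-submodule whose action descends to $K$, and that $b'$ lands in $V^H$), and that every modification preserves boundedness, which it does since principal cochains are bounded and $c$ inherits its bound from $b$. Alternatively, one can obtain the statement as the low-degree truncation of the inflation--restriction exact sequence of Monod \cite[12.4]{monod}, but the direct degree-one computation above is self-contained and makes the roles of $\textup{I}_V$ and of the hypothesis $\res^1[b] = 0$ transparent.
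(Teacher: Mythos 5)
Your proof is correct, and it takes a genuinely different route from the paper: the paper disposes of this proposition in one line by quoting the low-degree inflation--restriction exact sequence of Monod \cite[Proposition~12.4.2]{monod} (noting only that the further extension of that sequence requires replacing $H^1_b(H;V)$ by its $K$-invariants), whereas you give a self-contained computation with bounded inhomogeneous $1$-cocycles. Your argument is sound at every step I checked: the translation between the homogeneous cochains used in the paper and bounded crossed homomorphisms is standard and preserves boundedness; the two uses of normality (that the $\Gamma$-action on $V^H$ descends to $K$, and that $b'(g)\in V^H$ via the two evaluations of $b'(hg)$) are exactly where they need to be; and the norm bookkeeping is fine since principal cochains $g\mapsto gv_0-v_0$ have $\ell^\infty$-norm at most $2\sv{v_0}_V$ because $\Gamma$ acts by isometries. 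What your approach buys is transparency and minimal hypotheses -- it works verbatim for any Banach $\Gamma$-module and makes visible why $\res^1[b]=0$ forces $[b]$ to inflate from $K$; what the paper's citation buys is brevity and access to the stronger statement in degrees $2$ and $3$, which your degree-one bar-resolution computation does not readily extend to. You correctly flag the citation route as the alternative; it is in fact the paper's actual proof.
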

\begin{proof}
This is the first part of a longer exact sequence which extends to degrees $\leq 2,3$ \cite[Theorem~12.4.2]{monod}. Note that for this extended exact sequence, $H^1_b(H; V)$ must be replaced by the $K$-invariants of $H^1_b(H; V)$ with respect to the action that is induced by conjugation.
\end{proof}

\begin{cor}\label{rem:1:amenable:1:restriction}
Let $\phi \colon \Gamma \to K$ be a surjective homomorphism of discrete groups with kernel $H$ and let $V$ be a Banach $\Gamma$-module. If the inflation map
$
H^1_b(\phi; \textup{I}_V) \colon H^1_b(K; V^H) \to H^1_b(\Gamma; V)
$
is an isomorphism, then $\res^1 \colon H^1_b(\Gamma; V) \to H^1_b(H; V)$ is the zero map.
\end{cor}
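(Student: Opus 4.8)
The plan is to read the conclusion off directly from the exact sequence provided by Proposition~\ref{prop:sequence:restriction:inflation}. Under the standing hypotheses ($\phi \colon \Gamma \to K$ surjective with kernel $H$, and $V$ a Banach $\Gamma$-module), that proposition supplies the exact sequence
$$
0 \to H^1_b(K; V^H) \xrightarrow{H^1_b(\phi; \textup{I}_V)} H^1_b(\Gamma; V) \xrightarrow{\res^1} H^1_b(H; V).
$$
The only structural input I need is exactness at the middle term $H^1_b(\Gamma; V)$, which by definition says that
$$
\ker(\res^1) = \im\bigl(H^1_b(\phi; \textup{I}_V)\bigr).
$$

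First I would invoke the hypothesis that $H^1_b(\phi; \textup{I}_V)$ is an isomorphism; in particular it is surjective, so its image is all of $H^1_b(\Gamma; V)$. Substituting this into the identification above gives $\ker(\res^1) = H^1_b(\Gamma; V)$, which is precisely the statement that $\res^1 \colon H^1_b(\Gamma; V) \to H^1_b(H; V)$ is the zero map. This completes the argument.

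There is essentially no obstacle here: the result is a purely formal consequence of exactness, and in fact I only use \emph{surjectivity} of the inflation map rather than the full isomorphism hypothesis. The single point worth double-checking is that the coefficient conventions match between the corollary and Proposition~\ref{prop:sequence:restriction:inflation} — namely that $H^1_b(H;V)$ is understood with the restricted $\R[H]$-module structure $\iota^{-1}V$ (in the sense of Notation~\ref{rem:restriction:map}) — so that the map labelled $\res^1$ is indeed the one appearing in that sequence; this is immediate from the setup.
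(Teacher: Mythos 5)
Your proof is correct and is exactly the intended argument: the paper states this corollary immediately after Proposition~\ref{prop:sequence:restriction:inflation} with no separate proof, precisely because it follows formally from exactness at the middle term plus surjectivity of the inflation map, as you observe. Your remark that only surjectivity (not the full isomorphism hypothesis) is used is also accurate.
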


\begin{defi}\label{def:inflation:spaces}
Let $f \colon X \to Y$ be a (based) map of based path-connected spaces, let $f_* \colon \pi_1(X) \to \pi_1(Y)$ denote the induced homomorphism, and let $\widetilde{f} \colon (\widetilde{X}, \widetilde{x}) \to (\widetilde{Y}, \widetilde{y})$ be the unique based lift of the map $f$. Suppose that $f_*$ is surjective and let $H$ denote its kernel. For every normed $\R[\pi_1(X)]$-module $V$, precomposition with $\widetilde{f}$ together with the map $\textup{I}_V \colon V^H \to V$ of $\R[\pi_1(X)]$-modules  induce the \emph{inflation map}:
$$
f^{\#} \coloneqq H^\bullet_b(f; \textup{I}_V) \colon H^\bullet_b(Y; V^H) \to H^\bullet_b(X; V).
$$
\end{defi}

Similarly to the case of boundedly acyclic maps (or homomorphisms) (Definition~\ref{def:intro:bdd:cohom:equiv}), we consider the corresponding class of maps (or homomorphisms) which induce isomorphisms in bounded cohomology with respect to the inflation homomorphisms. This is motivated by the \emph{Mapping Theorem} (with coefficients) which provides examples of such maps.

\begin{defi}[Amenable maps/homomorphisms]\label{def:amenable:maps} \
\begin{itemize}
\item[(a)] Let $f \colon X \to Y$ be a map as in Definition~\ref{def:inflation:spaces}. The map $f$ is \emph{amenable} if the inflation map
$$
H^\bullet_b(f; \textup{I}_V) \colon H^\bullet_b(Y; V^H) \to H^\bullet_b(X; V)
$$
is an isometric isomorphism for all dual normed $\mathbb{R}[\pi_1(X)]$-modules $V$. We say that a path-connected space $X$ is \emph{amenable} if 
the map $X \to *$ is amenable. 
\item[(b)] A surjective homomorphism $\phi \colon \Gamma \to K$ of discrete groups with kernel $H$ is \emph{amenable} if the inflation map
$$
H^\bullet_b(\phi; \textup{I}_V) \colon H^\bullet_b(K; V^H) \to H^\bullet_b(\Gamma; V)
$$
is an isometric isomorphism for all dual normed $\mathbb{R}[\Gamma]$-modules $V$. 
\end{itemize}
\end{defi}

\begin{rem} \label{rem:amenable_space}
By the characterization of amenability in terms of bounded cohomology, a group $\Gamma$ is amenable if and only if the trivial homomorphism $\Gamma \to 1$ is amenable. Moreover, using the \emph{Mapping Theorem}, a path-connected space $X$ is amenable if and only if $X$ has amenable fundamental group.
\end{rem}

\begin{prop}  \label{prop:compare_amenable}
Let $f \colon X \to Y$ be a map as in Definition~\ref{def:inflation:spaces}. Then $f \colon X \to Y$ is amenable if and only if $f_* \colon \pi_1(X) \to \pi_1(Y)$ is amenable. 
\end{prop}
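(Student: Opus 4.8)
The plan is to adapt the proof of Proposition~\ref{prop:compare_bdd_acyclic}, replacing the restriction maps associated with the vertical arrows by the corresponding inflation maps. Write $\Gamma \coloneqq \pi_1(X)$ and $K \coloneqq \pi_1(Y)$ and consider the homotopy-commutative classifying-space square
$$
\xymatrix{
X \ar[rr]^-{c_X} \ar[d]_-{f} && B\Gamma \ar[d]^-{B(f_*)} \\
Y \ar[rr]_-{c_Y} && BK
}
$$
from the proof of Proposition~\ref{prop:compare_bdd_acyclic}. The map $B(f_*)$ induces $f_*$ on fundamental groups, so its kernel is again $H$; moreover, by the definition of the bounded cohomology of a group as that of a model of its classifying space, the inflation map $H^\bullet_b(B(f_*); \textup{I}_V)$ coincides with the inflation map of the homomorphism $f_*$. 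Hence $B(f_*)$ is an amenable map if and only if $f_*$ is an amenable homomorphism, and it suffices to prove that $f$ is amenable if and only if $B(f_*)$ is.

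First I would note that for every dual normed $\R[\Gamma]$-module $V$, the $H$-fixed submodule $V^H$ is again a dual normed $\R[K]$-module, since the $H$-invariants of a dual module are canonically the continuous dual of the $H$-coinvariants. The maps $c_X$ and $c_Y$ are isomorphisms on $\pi_1$, hence surjective with trivial (amenable) kernel, so the \emph{Mapping Theorem} applies: using Remark~\ref{rem:inflation:vs:pullback} to identify inflation with restriction when the kernel is trivial, the restriction maps
$$
c_X^* \colon H^\bullet_b(B\Gamma; V) \to H^\bullet_b(X; V), \qquad c_Y^* \colon H^\bullet_b(BK; V^H) \to H^\bullet_b(Y; V^H)
$$
are isometric isomorphisms, where $V$ and $V^H$ are identified with their pullbacks along the $\pi_1$-isomorphisms $(c_X)_*$ and $(c_Y)_*$.

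It then remains to check that the square
$$
\xymatrix@C=3em{
H^\bullet_b(BK; V^H) \ar[r]^-{H^\bullet_b(B(f_*); \textup{I}_V)} \ar[d]_-{c_Y^*} & H^\bullet_b(B\Gamma; V) \ar[d]^-{c_X^*} \\
H^\bullet_b(Y; V^H) \ar[r]^-{H^\bullet_b(f; \textup{I}_V)} & H^\bullet_b(X; V)
}
$$
commutes. Each horizontal map factors as a restriction map followed by the change-of-coefficients map induced by $\textup{I}_V \colon V^H \to V$, so commutativity follows from the functoriality of restriction maps applied to the homotopy-commutative square above, together with the naturality of the coefficient-change maps. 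Granting this, and since $c_X^*$ and $c_Y^*$ are isometric isomorphisms, the identity $c_X^* \circ H^\bullet_b(B(f_*); \textup{I}_V) = H^\bullet_b(f; \textup{I}_V) \circ c_Y^*$ shows that $H^\bullet_b(f; \textup{I}_V)$ is an isometric isomorphism if and only if $H^\bullet_b(B(f_*); \textup{I}_V)$ is one. As this holds for every dual normed $\R[\Gamma]$-module $V$, we conclude that $f$ is amenable if and only if $B(f_*)$, equivalently $f_*$, is amenable.

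I expect the main obstacle to be the careful verification of the commutativity of the last square: one must keep precise track of the coefficient modules $V$ and $V^H$ and of their pullbacks across all four arrows, and confirm at the outset that $V^H$ is a dual module so that the \emph{Mapping Theorem} genuinely applies to $c_Y$ with these coefficients.
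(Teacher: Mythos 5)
Your proposal is correct and follows essentially the same route as the paper, which proves Proposition \ref{prop:compare_amenable} by repeating the argument of Proposition \ref{prop:compare_bdd_acyclic}: pass to the classifying-space square, apply the \emph{Mapping Theorem} to the $\pi_1$-isomorphisms $c_X$ and $c_Y$, and transport the amenability condition across the resulting isometric isomorphisms. Your additional checks (that $V^H$ is again a dual normed module, and the commutativity of the square relating inflation maps to the restriction maps of $c_X$ and $c_Y$) are exactly the details the paper leaves implicit, and they go through as you describe.
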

\begin{proof}
The proof is the same as the proof of Proposition~\ref{prop:compare_bdd_acyclic}.
\end{proof}

\begin{rem}
The class of amenable maps/homomorphisms satisfies the 2-out-of-3 property: given composable maps $X \xrightarrow{f} Y \xrightarrow{g} Z$ between spaces (as in Def. \ref{def:inflation:spaces}), then the three maps $f, g,$ and $gf$ are amenable if any two of these are amenable. The proof uses Remark~\ref{rem:inflation:vs:pullback} for the less trivial case ($f, gf \text{ amenable} \Rightarrow g \text{ amenable}$).
\end{rem}

\subsection{Acyclic resolutions} If we ignore the metric structure, bounded cohomology theory of groups can be described as a 
(universal) $\delta$-functor in the classical sense of homological algebra. A foundational approach to bounded cohomology based 
on this observation is developed by B\"uhler \cite{Buehler-article, Bualg}. In this subsection, we recall the main algebraic properties 
of bounded cohomology from the viewpoint of (relative) homological algebra. 

\medskip

Let $\mathbf{Mod}_{\R}^{\Gamma}$ denote the category of normed $\R[\Gamma]$-modules and bounded linear $\Gamma$-maps. 
In addition, let $\Mod$ denote the category of complete seminormed $\R$-modules and bounded linear maps. We begin by recalling the
definition of \emph{relatively injective} normed $\R[\Gamma]$-modules.

\begin{defi}
Let $A$ and $B$ be two normed $\R[\Gamma]$-modules. A morphism $i \colon A \to B$ in $\mathbf{Mod}_{\R}^{\Gamma}$
is \emph{strongly injective} if there exists an $\R$-linear map $\sigma \colon B \to A$ such that $\sv{\sigma} \leq 1$
and $\sigma \circ i = \textup{Id}_A$. 

A normed $\R[\Gamma]$-module $V$ is \emph{relatively injective} if for every strongly injective morphism $i \colon A \to B$ and bounded linear $\Gamma$-map $\alpha \colon A \to V$, there exists a morphism 
$\beta \colon B \to V$ in $\mathbf{Mod}_{\R}^{\Gamma}$ such that $\beta \circ i = \alpha$ and $\sv{\beta} \leq \sv{\alpha}$.
\end{defi}

\begin{thm}\label{thm:basic:properties:bdd:coho}
Let $\Gamma$ be a discrete group. Then the familiy of functors $H^i_b(\Gamma; -) \colon \ModG \to \Mod,$ $i \in \N,$ satisfies the following properties:
\begin{enumerate}
\item (Normalization) $H^0_b(\Gamma; V) \cong V^\Gamma$ for all normed $\R[\Gamma]$-modules $V$.
\item (Vanishing) $H_b^i(\Gamma; V) = 0$ for every $i \in \, \N_{> 0}$ and every relatively injective normed $\R[\Gamma]$-module $V$.
\item  (Long exact sequence) Let $0 \to A \to B \to C \to 0$ be a short exact sequence of Banach $\Gamma$-modules. Then, there exists
a natural family of bounded maps $(\tau^\bullet)$ such that the following sequence
$$
\cdots \to H_b^\bullet(\Gamma; A) \to H_b^\bullet(\Gamma; B) \to  H_b^\bullet(\Gamma; C) \xrightarrow{\tau^\bullet}  H_b^{\bullet+1}(\Gamma; A) \to \cdots
$$
is exact.
\end{enumerate}
\end{thm}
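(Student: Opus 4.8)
The plan is to establish these three properties—which together exhibit $H^\bullet_b(\Gamma;-)$ as a cohomological $\delta$-functor—directly from the standard resolution, working throughout with the more transparent inhomogeneous model. The first step, common to all parts, is to record the usual identification of invariant homogeneous cochains with inhomogeneous ones: a $\Gamma$-invariant bounded cochain $f \in C_b^n(\Gamma;V)^\Gamma$ is determined by $\bar{f}(h_1,\dots,h_n) \coloneqq f(1,h_1,\dots,h_n)$ through the equivariance relation, giving an isometric isomorphism of normed complexes $C_b^\bullet(\Gamma;V)^\Gamma \cong \ell^\infty(\Gamma^\bullet, V)$, under which the functor $V \mapsto C_b^n(\Gamma;V)^\Gamma$ becomes $V \mapsto \ell^\infty(\Gamma^n, V)$. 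Property (1) then falls out by unwinding degree zero: there are no cochains in negative degree, so $H^0_b(\Gamma;V) = \ker\delta^0$, and translating the cocycle condition back to the homogeneous picture a degree-zero cocycle is an invariant function $g \mapsto gv$ that is also constant, forcing $v \in V^\Gamma$; conversely each $\Gamma$-fixed vector produces such a cocycle. Since $\Gamma$ acts by isometries, $v \mapsto (g \mapsto gv)$ is norm-preserving, so $H^0_b(\Gamma;V) \cong V^\Gamma$ isometrically.

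For the vanishing property (2) I would invoke relative homological algebra. The augmented homogeneous complex $0 \to V \to C_b^0(\Gamma;V) \to C_b^1(\Gamma;V) \to \cdots$ is a strong resolution of $V$ (it admits a contracting $\R$-linear homotopy of norm at most $1$) by relatively injective $\R[\Gamma]$-modules, so by the fundamental lemma of the theory $H^\bullet_b(\Gamma;V)$ may be computed from \emph{any} strong resolution of $V$ by relatively injective modules. If $V$ is itself relatively injective, then $0 \to V \xrightarrow{\id} V \to 0 \to \cdots$ is such a resolution; passing to $\Gamma$-invariants leaves $V^\Gamma$ in degree $0$ and nothing above, whence $H^i_b(\Gamma;V) = 0$ for $i \ge 1$. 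Alternatively, one can extend $\id_V$ through the relative injectivity property along the strong resolution to build directly a norm-bounded contracting homotopy on $\ell^\infty(\Gamma^\bullet,V)$ in positive degrees.

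For the long exact sequence (3) I would apply the snake lemma to a short exact sequence of cochain complexes. Given $0 \to A \to B \to C \to 0$ of Banach $\Gamma$-modules, applying $\ell^\infty(\Gamma^n,-)$ yields $0 \to \ell^\infty(\Gamma^n,A) \to \ell^\infty(\Gamma^n,B) \to \ell^\infty(\Gamma^n,C) \to 0$. Left-exactness and exactness in the middle are formal; the only delicate point is surjectivity on the right, and this is where completeness enters. By the open mapping theorem the surjection $B \to C$ admits a (generally non-linear) bounded lifting of elements: there is a constant $\kappa$ such that every $c \in C$ is the image of some $b \in B$ with $\sv{b}_B \le \kappa\,\sv{c}_C$; choosing such a lift of each value of a bounded cochain yields a bounded lift in $\ell^\infty(\Gamma^n,B)$. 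Once the short exact sequence of complexes is in hand, the long exact sequence with connecting maps $\tau^\bullet$ follows from homological algebra, continuity of $\tau^\bullet$ is read off from the explicit snake-lemma formula together with the lifting constant $\kappa$, and naturality is standard.

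I expect the main obstacle to be exactly this cochain-level surjectivity in part (3). It is the step that fails for arbitrary normed (incomplete) modules and forces the Banach hypothesis, and it requires the \emph{quantitative} bounded-lifting consequence of the open mapping theorem rather than a mere set-theoretic splitting—both to keep the lifted cochain bounded and to guarantee that the connecting homomorphisms $\tau^\bullet$ are continuous, as asserted in the statement.
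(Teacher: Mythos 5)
Your proposal is correct and follows essentially the same route as the paper, which handles (1) by unwinding degree zero of the standard resolution, (2) by computing with the strong relatively injective resolution $0 \to V \xrightarrow{\id} V \to 0$, and (3) by citing Monod's long exact sequence for Banach coefficient modules. The one technical point you single out -- that cochain-level surjectivity of $\ell^\infty(\Gamma^n,B) \to \ell^\infty(\Gamma^n,C)$ requires a quantitative bounded lifting via the open mapping theorem, which is precisely where the Banach hypothesis is used -- is exactly the content of the result of Monod that the paper invokes, so you have correctly identified and supplied the detail the paper leaves to the references.
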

\begin{proof} We include a few comments for the convenience of the reader and refer to B\"uhler~\cite{Buehler-article, Bualg} and Monod~\cite{monod} for 
detailed proofs. First, we recall that the bounded cohomology groups are always complete seminormed spaces. Then, (1) follows easily from the fact that $C^0_b(\Gamma; V)^\Gamma \cong V^\Gamma$. For (2), let $V$ be a relatively injective normed $\R[\Gamma]$-module and consider the resolution $0 \to V \xrightarrow{\textup{Id}_V} V \to 0$. Using the general properties of (strong) resolutions by relatively injective $\R[\Gamma]$-modules, it follows that this resolution can be used to compute the bounded cohomology with coefficients in $V$ (see, for example,~\cite[Corollary~4.15]{Frigerio:book}). As a consequence, $H_b^i(\Gamma; V) = 0$, for every $i \in \, \N_{> 0}$. (3): Since $\Gamma$ is a discrete group, every Banach $\Gamma$-module is continuous in the sense of \cite[Lemma~1.1.1]{monod}. Then the result is 
a special case of~\cite[Proposition~8.2.1]{monod}. See also~\cite[Section~4]{Buehler-article}.
\end{proof}

Using these properties, it follows by standard methods of homological algebra that bounded cohomology can also be computed using resolutions by 
\emph{boundedly acyclic modules} instead of the standard resolution (Section~\ref{sec:bc}) or other strong resolutions by relatively injective modules. We emphasize that allowing this flexibility has the drawback that it neglects the metric structure given by the seminorm on the bounded cohomology groups.   

\begin{defi}\label{defi:acyclic:mod:and:res}
 Let $\Gamma$ be a discrete group and let $n \geq 1$ be an integer or $n=\infty$. A normed $\R[\Gamma]$-module $V \in \ModG$ is \emph{boundedly $n$-acyclic} if $H^i_b(\Gamma; V) = 0$ for $1 \leq i \leq n$. We say that $V$ is \emph{boundedly acyclic} if $V$ is boundedly $\infty$-acyclic. 
 \end{defi}
 
%\mmcomm{Should we pick $V$ Banach also?}
 
\begin{prop} \label{prop:acyclic_res}
Let $\Gamma$ be a discrete group and let $n \geq 1$ be an integer or $n=\infty$. Let $V$ be a normed $\R[\Gamma]$-module and suppose that 
$$0 \to V = I(-1) \to I(0) \to I(1) \to \cdots $$ 
is an exact sequence in $\ModG$ such that each $I(j) \in \ModG$ is a Banach $\Gamma$-module and boundedly $(n-j)$-acyclic for every 
$0 \leq j \leq n -1$ . Consider the cochain complex
$$I^{\bullet, \Gamma} = (0 \to I(0)^{\Gamma} \to I(1)^{\Gamma} \to \cdots ).$$
Then there are preferred isomorphisms
$$ H^i(I^{\bullet, \Gamma}) \xrightarrow{\cong} H^i_b(\Gamma; V) \ \text{ for } 0 \leq i \leq n,$$
and a preferred injective map 
$$ H^{n+1}(I^{\bullet, \Gamma}) \hookrightarrow H^{n+1}_b(\Gamma; V).$$
 \end{prop}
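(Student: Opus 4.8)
The plan is to run the standard dimension-shifting argument, using only the three properties recorded in Theorem~\ref{thm:basic:properties:bdd:coho}: normalization, the vanishing on the acyclic modules, and the natural long exact sequence. First I would break the resolution into short exact sequences. Writing $d^j \colon I(j) \to I(j+1)$ for the differentials, set $Z^j \coloneqq \ker(d^j) \subseteq I(j)$; these are closed $\Gamma$-submodules of Banach $\Gamma$-modules, hence themselves Banach $\Gamma$-modules, and exactness of the resolution identifies $Z^0$ with $V$ (via the augmentation) while giving $\im(d^{j-1}) = Z^j$ for $j \geq 1$. Thus for every $j \geq 0$ there is a short exact sequence of Banach $\Gamma$-modules
$$0 \to Z^j \to I(j) \xrightarrow{d^j} Z^{j+1} \to 0,$$
to which I apply the long exact sequence of Theorem~\ref{thm:basic:properties:bdd:coho}(3); write $\tau_j^\bullet \colon H^\bullet_b(\Gamma; Z^{j+1}) \to H^{\bullet+1}_b(\Gamma; Z^j)$ for its connecting maps. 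The hypothesis that $I(j)$ is boundedly $(n-j)$-acyclic means precisely that $H^k_b(\Gamma; I(j)) = 0$ for $1 \le k \le n-j$, so reading off the long exact sequence, $\tau_j^k$ is injective whenever $1 \le k \le n-j$ and an isomorphism whenever $1 \le k \le n-j-1$.

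Second, I would identify the cohomology of the invariants complex. Since $H^0_b(\Gamma; -) = (-)^\Gamma$ is left exact (by normalization together with the long exact sequence), taking invariants of the sequences above yields $\ker((d^i)^\Gamma) = (Z^i)^\Gamma$ and identifies $\im((d^{i-1})^\Gamma)$ with the kernel of $\tau_{i-1}^0 \colon (Z^i)^\Gamma \to H^1_b(\Gamma; Z^{i-1})$. Hence, for $i \geq 1$,
$$H^i(I^{\bullet,\Gamma}) \cong \im(\tau_{i-1}^0) \hookrightarrow H^1_b(\Gamma; Z^{i-1}),$$
and this inclusion is onto exactly when $\tau_{i-1}^0$ is surjective, i.e. when $H^1_b(\Gamma; I(i-1)) = 0$, which holds precisely for $i \leq n$. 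For $i = 0$ the normalization property gives directly $H^0(I^{\bullet,\Gamma}) = (Z^0)^\Gamma \cong V^\Gamma \cong H^0_b(\Gamma; V)$.

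Third comes the dimension shift itself: I would compose the connecting maps along the chain
$$H^1_b(\Gamma; Z^{i-1}) \xrightarrow{\tau^1_{i-2}} H^2_b(\Gamma; Z^{i-2}) \to \cdots \xrightarrow{\tau^{i-1}_0} H^i_b(\Gamma; Z^0) = H^i_b(\Gamma; V).$$
Along this chain the map $\tau_j^k$ occurs with $j+k = i-1$, so the isomorphism condition $k \le n-j-1$ is equivalent to $i \le n$, while the injectivity condition $k \le n-j$ is equivalent to $i \le n+1$. Therefore every map in the chain is an isomorphism when $i \le n$ and is at least injective when $i = n+1$. Combining with the second step: for $1 \le i \le n$ I obtain a natural isomorphism $H^i(I^{\bullet,\Gamma}) \cong H^i_b(\Gamma; V)$, and for $i = n+1$ a composite injection $H^{n+1}(I^{\bullet,\Gamma}) \hookrightarrow H^1_b(\Gamma; Z^n) \hookrightarrow H^{n+1}_b(\Gamma; V)$. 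Naturality of the connecting homomorphisms in Theorem~\ref{thm:basic:properties:bdd:coho}(3) makes all of these comparison maps canonical, which accounts for the word ``preferred''. One should note that only the acyclicity of $I(0), \dots, I(n-1)$ is ever invoked, matching exactly the range $0 \le j \le n-1$ in the hypothesis.

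The bookkeeping of degrees is where I expect the only real care to be needed: the acyclicity of $I(j)$ degrades as $j$ grows, and one must check that this degradation matches, term by term, the range in which each connecting map needs to be an isomorphism (for $i \le n$) versus merely injective (for $i = n+1$). The computation above shows the two ranges align precisely along the diagonal $j + k = i-1$. A secondary, purely technical point to confirm is that the displayed short exact sequences really are short exact sequences of Banach $\Gamma$-modules, so that Theorem~\ref{thm:basic:properties:bdd:coho}(3) applies; this is immediate once one observes that each cocycle module $Z^j$ is closed in the Banach module $I(j)$ and hence complete, and that the induced map $I(j) \to Z^{j+1}$ is a surjection of Banach $\Gamma$-modules with kernel $Z^j$.
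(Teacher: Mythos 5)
Your argument is correct and is essentially the paper's own proof: both split the resolution into short exact sequences of the (Banach) cycle modules, apply the long exact sequence of Theorem~\ref{thm:basic:properties:bdd:coho}(3) together with the graded acyclicity $H^k_b(\Gamma;I(j))=0$ for $1\le k\le n-j$ to dimension-shift $H^i_b(\Gamma;V)$ down to $H^1_b(\Gamma;Z^{i-1})$, identify the latter with $H^i(I^{\bullet,\Gamma})$ via the low-degree exact sequence of invariants, and obtain only a chain of injections in degree $n+1$. The only differences are cosmetic: you index by kernels rather than images of the differentials and package the shift as a single composite of connecting maps along the diagonal $j+k=i-1$, whereas the paper iterates two-term isomorphisms.
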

\begin{proof} The idea of the proof is standard in homological algebra; we include the details for completeness. The assertion holds for $i = 0$. 
For every  $j \geq 0,$ let $Z^{j-1}I^\bullet$ denote the image of $\delta^{j-1} \colon I(j-1) \to I(j)$.
Since $I(j)$ is a Banach $\Gamma$-module and the resolution $0 \to V \to I^\bullet$ is exact, we also know that 
 the spaces $Z^{j-1}I^\bullet$ are Banach $\Gamma$-modules. This implies that the short exact sequences for $0 \leq j \leq n-1$:
$$0 \to Z^{j-1}I^\bullet \to I(j) \to Z^{j}I^{\bullet} \to 0$$
induce long exact sequences in bounded cohomology (Theorem~\ref{thm:basic:properties:bdd:coho}(3)):
\begin{align*}
\cdots \to H^i_b(\Gamma;  Z^{j}I^{\bullet}) \to H^{i+1}_b(\Gamma;  Z^{j-1}I^{\bullet}) &\to H^{i+1}_b(\Gamma; I(j)) \\
&\to H^{i+1}_b(\Gamma;  Z^{j}I^{\bullet}) \to \cdots 
\end{align*}
Moreover, using the fact that each $I(j)$ is boundedly $(n-j)$-acyclic, i.e. $H^i_b(\Gamma; I(j)) = 0$ for $1 \leq i \leq n-j$, we conclude that
$$
H^{i+1}_b(\Gamma; Z^{j-1}I^{\bullet}) \cong H^{i}_b(\Gamma; Z^{j}I^{\bullet})
$$
for $2 \leq i + 1 \leq n-j$. This shows that for every $1 \leq i \leq n$, we have isomorphisms
\begin{align*}
H^i_b(\Gamma; V) = H^i_b(\Gamma; Z^{-1}I^\bullet) \cong H^{i-1}_b(\Gamma; Z^0 I^{\bullet}) \cong \cdots \cong H^1_b(\Gamma; Z^{i-2}I^{\bullet}).
\end{align*}
On the other hand, the long exact sequence in low degrees has the form
\begin{equation*}
0 \to H^0_b(\Gamma; Z^{j-1}I^\bullet) \to H^0_b(\Gamma; I(j)) \to H^0_b(\Gamma; Z^{j}I^\bullet) \to H^1_b(\Gamma; Z^{j-1}I^\bullet) \to 0
\end{equation*}
for $0 \leq j \leq n-1$. Hence, for every $1 \leq i \leq n$, we obtain the following identification (using Theorem~\ref{thm:basic:properties:bdd:coho}(1)):
\begin{equation*}
H^i_b(\Gamma; V) \cong H^1_b(\Gamma; Z^{i-2}I^{\bullet}) \cong \mathrm{coker}(I(i-1)^{\Gamma} \to (Z^{i-1}I^{\bullet})^{\Gamma}).
\end{equation*}
Using the left exactness of $(-)^{\Gamma}$, the latter is then isomorphic to $H^i(I^{\bullet, \Gamma})$. This finishes the proof for $1 \leq i \leq n$.

Let us consider the case $i = n + 1 \geq 2$. For $0 \leq j \leq n-1$, we have a long exact sequence (Theorem~\ref{thm:basic:properties:bdd:coho}(3)) 
\begin{equation*}
\cdots \to 0 \to H^{n-j}_b(\Gamma; Z^{j}I^{\bullet}) \xrightarrow{\tau} H^{n-j+1}_b(\Gamma; Z^{j-1}I^{\bullet}) \to H^{n-j+1}_b(\Gamma; I(j)) \to \cdots 
\end{equation*}
This shows that there exists a chain of injective maps
$$H_b^{n+1}(\Gamma; V) \supset H^{n}_b(\Gamma; Z^0 I^{\bullet}) \supset H^{n-1}_b(\Gamma; Z^1 I^{\bullet}) \supset \cdots \supset H^1_b(\Gamma; Z^{n-1}I^{\bullet}).$$ 
Moreover, since the last object fits into the long exact sequence
$$H_b^0(\Gamma; I(n)) = I(n)^{\Gamma} \to H_b^0(\Gamma; Z^{n}I^{\bullet}) = Z^{n}I^{\bullet}{}^{\Gamma} \to H^1_b(\Gamma; Z^{n-1}I^{\bullet}) \to H^1_b(\Gamma; I(n))$$
we have
$$H^{n+1}(I^{\bullet, \Gamma}) \cong \mathrm{coker}(I(n)^{\Gamma} \to (Z^{n}I^{\bullet})^{\Gamma}) \subset H^1_b(\Gamma; Z^{n-1}I^{\bullet}),$$
whence the desired inclusion $H^{n+1}(I^{\bullet, \Gamma}) \hookrightarrow H^{n+1}_b(\Gamma; V)$ follows. 
\end{proof}

\begin{rem}\label{rem:acyclic_res}
Assuming that the resolution $I(\bullet)$ in Proposition \ref{prop:acyclic_res} is strong \cite[Section~7.1]{monod}, we may also identify the maps in Proposition \ref{prop:acyclic_res} directly using the standard resolution of $V$. Let 
$$0 \to V = C_b^{-1}(\Gamma; V) \to C_b^0(\Gamma; V) \xrightarrow{\delta^0} C_b^1(\Gamma; V) \xrightarrow{\delta^1} \cdots$$
be the standard resolution of $V$ by relatively injective Banach $\Gamma$-modules as described earlier. 
There is an essentially unique 
morphism of resolutions $(g^\bullet \colon I(\bullet) \to C_b^{\bullet}(\Gamma; V))$ of $V$ (see, for example, \cite[Section~7.2]{monod})
$$
\xymatrix{
0 \ar[r] & V \ar@{=}[d] \ar[r]^{\epsilon} & I(0) \ar[d]^{g^0} \ar[r]^{\delta^0} & I(1) \ar[d]^{g^1} \ar[r]^{\delta^1} & \cdots \\
0 \ar[r] & V \ar[r]^(0.4){\epsilon} & C_b^0(\Gamma; V) \ar[r]^{\delta^0} & C_b^1(\Gamma; V) \ar[r]^(0.6){\delta^1} & \cdots 
}
$$
The maps in Proposition \ref{prop:acyclic_res} are identified with the maps in cohomology that are induced by the morphism $g^{\bullet} \colon \big(I(\bullet), \delta^{\bullet}\big) \to \big(C_b^{\bullet}(\Gamma; V), \delta^{\bullet}\big)$, after first passing to 
the $\Gamma$-invariants. To see this,  let $Z^{j-1}C_b^{\bullet}$ denote the image of 
$\delta^{j-1} \colon C_b^{j-1}(\Gamma; V) \to C_b^j(\Gamma; V),$ and
consider the associated mophisms of short exact sequences:
$$
\xymatrix{
0 \ar[r] & Z^{j-1}I^\bullet \ar[d]^-{g^j} \ar[r] & I(j) \ar[r] \ar[d]^-{g^j} & Z^{j}I^{\bullet} \ar[d]^-{g^{j+1}} \ar[r] & 0 \\
0 \ar[r] & Z^{j-1}C_b^\bullet \ar[r] & C_b^{j}(\Gamma; V) \ar[r] & Z^{j}C_b^{\bullet} \ar[r] & 0.
}
$$
Then, applying the method of Proposition \ref{prop:acyclic_res} to the standard resolution of $V$ and using the functoriality of the long exact sequences in Theorem \ref{thm:basic:properties:bdd:coho}(3), we obtain the following identifications for $0 \leq i \leq n$:
$$
\xymatrix{
H^i(I(\bullet)^{\Gamma}, \delta^{\bullet}) \cong \mathrm{coker}\big(I(i-1)^{\Gamma} \to (Z^{i-1}I^{\bullet})^{\Gamma}\big) \ar@<-14ex>[d]^-{H^i(g^\bullet)} \ar@<8ex>[d]  \ar[r]^(0.7){\cong} & H^i_b(\Gamma; Z^{-1} I^{\bullet}) \ar@{=}[d] \\
H^i(C^{\bullet}_b(\Gamma; V)^{\Gamma}, \delta^{\bullet}) \cong \mathrm{coker}\big(C^{i-1}_b(\Gamma; V)^{\Gamma} \to (Z^{i-1}C^{\bullet}_b)^{\Gamma}\big)  \ar[r]^(0.75){\cong} & H^i_b(\Gamma; Z^{-1}C^{\bullet}_b) 
}
$$
where the vertical maps are induced by $g^{\bullet}$ and $Z^{-1}I^\bullet = Z^{-1}C^\bullet_b = V$. The case $i=n+1$ is similar. 
It follows from the properties of $g^{\bullet}$ that $H^i(g^\bullet)$ is norm non-increasing (see~\cite[Theorem~4.16]{Frigerio:book}, \cite{monod}). 
%The assumption that $I(\bullet)$ is strong is related to the subtleties of the \emph{exact} category of Banach $\Gamma$-modules and its derived category. An arbitrary resolution $\epsilon \colon V \to I(\bullet)$ does not define an isomorphism in this derived category in general, even if each object $I(\bullet)$ is boundedly acyclic. However, as we saw in Proposition \ref{prop:acyclic_res}, if each object $I(\bullet)$ is boundedly acyclic, then any such resolution computes the bounded cohomology of the Banach $\Gamma$-module $V$. This additional flexibility is essentially a consequence of Theorem \ref{thm:basic:properties:bdd:coho}(3) which is valid for \emph{arbitrary} short exact sequences of Banach $\Gamma$-modules and not just those coming from the exact structure. We expect that Theorem \ref{thm:basic:properties:bdd:coho}(3) does not hold if we regard bounded cohomology as taking values in the heart of the derived category of Banach $\Gamma$-modules with respect to its standard $t$-structure. We refer to \cite{Bualg} for a detailed account of the construction and properties of the derived category of Banach $\Gamma$-modules. 
\end{rem}
 
% \grcomm{I want to think about the last paragraph of this remark again. There seem to be different exact structures depending on whether $G$ is discrete or not, and I am a little confused}
 
% \mmcomm{not sure if I had to delated the last part, or rephrase it}
 
 \subsection{Boundedly acyclic groups and spaces} We consider the following class of (discrete) groups and topological spaces.
For any group $\Gamma$, we recall that $\R$ is viewed as a Banach $\Gamma$-module with the trivial action.
 
\begin{defi}\label{defi:intro:VBCR}
Let $n \geq 1$ be an integer or $n=\infty$. 
\begin{itemize}
\item[(1)] A topological space $X$ is called \emph{boundedly $n$-acyclic} if $H^0_b(X; \R) \cong \R$ and $H^i_b(X; \R) = 0$ for $1 \leq i \leq n$. We say that $X$ is \emph{boundedly acyclic} if $X$ is boundedly $\infty$-acyclic. 

\item[(2)] A group $\Gamma$ is called \emph{boundedly $n$-acyclic} if $H^i_b(\Gamma; \R) = 0$ for $1 \leq i \leq n$. We say that $\Gamma$ is a \emph{boundedly acyclic group} if $\Gamma$ is boundedly $\infty$-acyclic. %and we write $\vbcr \coloneqq \vbcr_{\infty}$. 

\item[(3)] We denote by $\vbcr_n$ the class of \emph{boundedly $n$-acyclic groups} when $n < \infty$. For $n= \infty$, we denote the class of boundedly acyclic groups by $\vbcr$.
% the class of boundedly ($\infty$)-acyclic groups. 
% is the class of discrete groups $\Gamma$ whose bounded cohomology groups (with coefficients in $\R$) vanish in degrees $1 \leq i \leq n$, i.e.,  $H^i_b(\Gamma; \mathbb{R}) = 0$ for $1 \leq i \leq n$. 
\end{itemize}
\end{defi}

\begin{rem}
Note that since every group $\Gamma$ has $H_b^0(\Gamma; \R) \cong \R$, we omitted this condition from the definition of a boundedly $n$-acyclic group.
\end{rem}

The family of boundedly acyclic groups was introduced by L\"oh~\cite{Loeh:dim} under the name of \emph{groups with bounded cohomological dimension zero}. Our choice of terminology follows the definition of an \emph{acyclic space} in homotopy theory (i.e., a space $X$ with trivial reduced singular 
homology in all degrees). As a consequence of the \emph{Mapping Theorem}, note that a path-connected space $X$ is boundedly $n$-acyclic if and only if $X$ has boundedly $n$-acyclic fundamental group.  

 \begin{example} \label{example:vbcr}
Amenable groups are boundedly acyclic. Moreover, the group of homeomorphisms of $\R^n$ with compact support was shown to be boundedly acyclic by  Matsumoto--Morita~\cite{Matsu-Mor}. Later, following the methods of \cite{Matsu-Mor}, L\"oh showed that the class of mitotic groups is also contained in $\vbcr$~\cite[Theorem~1.2]{Loeh:dim}. 

Since there exist mitotic groups which contain non-abelian free subgroups (see~\cite{Loeh:dim}), it follows that $\vbcr$ is not closed under taking subgroups in general. In fact, a much more general result has been shown recently: every (finitely generated) group embeds into a (finitely generated) boundedly acyclic group~\cite{fflm1, monod:thompson}. In addition, the results by Matsumoto--Morita \cite{Matsu-Mor} and L\"oh \cite{Loeh:dim} were generalized to a proof that binate groups lie in $\vbcr$~\cite{fflm1}. Finally, more examples of  homeomorphism and diffeomorphism groups which lie in $\vbcr$ have been found by Monod--Nariman~\cite{monodnariman}.

Recently Monod showed that lamplighter groups are boundedly acyclic~\cite{monod:thompson},
providing a new class of non-amenable finitely generated boundedly acyclic groups. 
Based on this result, Monod also showed that Thompson's group $F$ is boundedly acylic (it is a well-known open question whether $F$ is in fact amenable or not). 
\end{example}

\begin{example}\label{example:vbcr:n}
There are many known examples of $2$-boundedly acyclic groups, 
most notably lattices in higher rank Lie groups (both in the cocompact case~\cite{Burger_Monod_99}
and in the general case~\cite{Burger_Monod_2002}).

Other examples of $2$-boundedly acyclic groups arise from dynamics. Most of these examples have been discussed in the recent work by Fournier-Facio 
and Lodha~\cite{fflodha} in connection with the notion of \emph{commuting conjugates}. However, many $2$-boundedly acyclic groups coming from dynamics were known much earlier. We mention here Thompson's group $F$~\cite{cfp_96} (now known to be boundedly acyclic~\cite{monod:thompson}), 
Thompson's group $V$~\cite{fflm2} (now known to be boundedly acyclic~\cite{konstantin}),
and some homeomorphism groups~\cite{bowden}.

Examples of $3$-boundedly acyclic groups can be found again among certain lattices 
in higher rank Lie groups~\cite{monod07, monod10}, e.g., lattices in $\textup{SL}_n$ are known to be $3$-boundedly acyclic~\cite{monod04}.
Moreover, Bucher and Monod also showed that Burger--Mozes groups
are $3$-boundedly acyclic~\cite{Bucher-Monod} (extending the classical result about their $2$-bounded acyclicity~\cite{Burger_Monod_99}).

%Let $\ell \geq 1$ be an integer. For every $1 \leq i \leq \ell$, let $G_i$ be a locally compact group acting properly on a locally finite tree $T_i$ and transitively on the %boundary $\partial T_i$. A lattice $\Gamma < G_1 \times \cdots \times G_\ell$ is \emph{irreducible}, if its projection to any proper product of the $G_i$'s is dense. %Bucher--Monod~\cite[Corollary~3]{Bucher-Monod} proved that every irreducible lattice $\Gamma < G_1 \times \cdots \times G_\ell$ is boundedly $(2\ell-1)$-%acyclic.
%In particular, when $\ell = 2$, the previous result shows that Burger--Mozes groups~\cite{Burger_Mozes_Lattices} are boundedly $3$-acyclic (see also~
%\cite[Remark~27]{Burger_Monod_2002}). 

There are further examples of non-amenable groups whose bounded cohomology vanishes in low degrees that have been found recently~\cite{fflm2, fflm1, monodnariman}.
\end{example}
 
\begin{prop} \label{prop:trivial_mod} 
Let $\Gamma \in \vbcr_n$. Then $H^i_b(\Gamma; V) = 0$ for every $\R$-generated trivial Banach $\Gamma$-module $V$ and $1 \leq i \leq n$.
\end{prop}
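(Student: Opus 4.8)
The plan is to identify the cochain complex that computes $H^\bullet_b(\Gamma; V)$ with an $\ell^\infty$-indexed family of copies of the complex computing $H^\bullet_b(\Gamma;\R)$, and then to deduce the vanishing from a functional-analytic argument based on the open mapping theorem. First I would write $V = \ell^\infty(S,\R)$ as a Banach space with the trivial $\Gamma$-action and set $D^\bullet \coloneqq C^\bullet_b(\Gamma;\R)^\Gamma$, the complex of $\Gamma$-invariant bounded real cochains, which by definition computes $H^\bullet_b(\Gamma;\R)$. Since the action on $V$ is trivial, the assignment sending a $V$-valued cochain $f$ to the family $\bigl(s \mapsto f(-)(s)\bigr)_{s\in S}$ gives a canonical isometric isomorphism of cochain complexes $C^\bullet_b(\Gamma; V)^\Gamma \cong \ell^\infty(S, D^\bullet)$: the differential acts pointwise in $s$, and $\Gamma$-invariance is checked coordinatewise over $S$ because the trivial action does not couple the $S$-coordinate to the group variables. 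Thus it suffices to prove the purely functional-analytic statement that if $(D^\bullet, \delta^\bullet)$ is a cochain complex of Banach spaces with bounded differentials and $H^i(D^\bullet)=0$ for $1\le i\le n$, then $H^i(\ell^\infty(S,D^\bullet))=0$ for $1 \le i \le n$.

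For the functional-analytic step, I would fix $1\le i\le n$. The hypothesis $\Gamma \in \vbcr_n$ gives $H^i(D^\bullet) = H^i_b(\Gamma;\R) = 0$, which means $\ker\delta^i = \operatorname{im}\delta^{i-1}$; since $\ker\delta^i$ is closed in the Banach space $D^i$, the image $\operatorname{im}\delta^{i-1}$ is closed as well, so $\delta^{i-1}$ corestricts to a surjective bounded linear map $D^{i-1}\to \ker\delta^i$ of Banach spaces. By the open mapping theorem there is a constant $C>0$ such that every $z\in\ker\delta^i$ admits a preimage $y\in D^{i-1}$ with $\delta^{i-1}(y)=z$ and $\sv{y}\le C\sv{z}$. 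Given a cocycle $(z_s)_{s\in S}\in\ell^\infty(S,D^i)$, that is, a uniformly bounded family with each $z_s\in\ker\delta^i$, I would choose for each $s$ such a preimage $y_s$ with $\sv{y_s}\le C\sv{z_s}$; the family $(y_s)$ is then uniformly bounded, hence lies in $\ell^\infty(S,D^{i-1})$ and satisfies $\delta^{i-1}\bigl((y_s)\bigr)=(z_s)$. This exhibits $(z_s)$ as a coboundary and proves $H^i(\ell^\infty(S,D^\bullet))=0$, whence the proposition.

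The main obstacle is precisely the passage from pointwise exactness to a \emph{uniform} bound on the chosen primitives: a priori each cocycle $z_s$ is individually a coboundary, but one must control the norms of the primitives uniformly in $s$ in order to remain inside $\ell^\infty(S, D^{i-1})$. This is exactly where the open mapping theorem is essential, and it is available only because the vanishing of the (unreduced) bounded cohomology $H^i_b(\Gamma;\R)$ forces $\operatorname{im}\delta^{i-1}$ to be closed. For a merely \emph{reduced} vanishing the corestriction of $\delta^{i-1}$ would have dense but not closed image and the open mapping theorem would not apply, which is consistent with the seminorm subtleties emphasized elsewhere in the paper.
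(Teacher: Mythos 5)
Your proposal is correct and follows essentially the same route as the paper: both first identify $C^\bullet_b(\Gamma;V)^\Gamma$ with $\ell^\infty(S, C^\bullet_b(\Gamma;\R)^\Gamma)$ using the triviality of the action, and both then conclude by exactness of $\ell^\infty(S,-)$ on (truncated, acyclic) complexes of Banach spaces. The only difference is that the paper delegates the uniform-lifting step to the argument of \cite[Lemma~8.2.4]{monod}, whereas you spell out the underlying open mapping theorem argument explicitly -- which is indeed the content of that lemma.
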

\begin{proof}
We need to show that the cochain complex of $\Gamma$-invariants associated to the standard resolution $(V, C^\bullet_b(\Gamma; V), \delta^\bullet)$ of $V$
\begin{equation*}\label{eq:res:inv:standard}
0 \to C_b^0(\Gamma; V)^\Gamma \xrightarrow{\delta^0} C_b^1(\Gamma; V)^\Gamma \xrightarrow{\delta^1} C_b^2(\Gamma; V)^\Gamma \xrightarrow{\delta^2} \cdots 
\end{equation*}
has vanishing cohomology in degrees $1 \leq i \leq n$ when $V = \ell^{\infty}(S, \R)$ is an $\R$-generated trivial Banach $\Gamma$-module.
By assumption, this holds for $V = \R$. For the general case, let $V = \ell^{\infty}(S, \R)$ be an $\R$-generated trivial Banach $\Gamma$-module.
% normed $\mathbb{R}[\Gamma]$-module such that $V$ is the topological dual $\mathcal{B}(W)$ of $W$. 
Then we have a natural identification of Banach $\Gamma$-modules
$$
C^\bullet_b(\Gamma; V) \cong \ell^{\infty}(S, C_b^\bullet(\Gamma; \mathbb{R}))
$$
induced by
$$
f \mapsto \Big( s \mapsto \big((g_0, \cdots, g_\bullet) \mapsto f(g_0, \cdots, g_\bullet)(s) \big)  \Big).
$$
Using that the $\Gamma$-action on $S$ is trivial, these identifications yield an identification of 
cochain complexes
$$
\ell^{\infty}(S, C_b^\bullet(\Gamma; \mathbb{R}))^\Gamma \cong \ell^{\infty}(S, C_b^\bullet(\Gamma; \mathbb{R})^\Gamma).
$$
The $n$-truncation of $C^\bullet_b(\Gamma; \R)^\Gamma$
$$
0 \to C^0_b(\Gamma; \R)^\Gamma \to \cdots \to C^{n-1}_b(\Gamma; \R)^\Gamma \to \ker(\delta^{n}) \to 0 \to \cdots 
$$
is an acyclic cochain complex of Banach spaces. Then the result follows because the functor $\ell^{\infty}(S, -)$ preserves short exact sequences of Banach spaces, using the arguments of \cite[Lemma 8.2.4]{monod}, therefore, it also preserves acyclic cochain complexes of Banach spaces.  
\end{proof}

\begin{rem}
Proposition \ref{prop:trivial_mod} shows that the definitions of boundedly $n$-acyclic homomorphism and boundedly $n$-acyclic group are compatible in the following sense: a group $\Gamma$ is boundedly $n$-acyclic if and only if the trivial homomorphism $\Gamma \to 1$ is boundedly $n$-acyclic. The analogous statement also holds for topological spaces.
\end{rem}

\begin{rem}  \label{W-bdd-acyclic2}
The previous result explains the motivation for introducing the class of $\R$-generated Banach modules. % is well-behaved with respect to the vanishing of the bounded cohomology groups. 
%Hence, 
More precisely, the class of $\R$-generated Banach $\Gamma$-modules is a natural choice of a family of Banach $\Gamma$-modules for which boundedly acyclic groups $\Gamma$ have vanishing bounded cohomology. % with respect to all the modules lying in this family.
However, we do not know if the vanishing result of Proposition \ref{prop:trivial_mod} is true for general \emph{dual normed} trivial  $\R[\Gamma]$-modules $V$. On the other hand, given a Banach space $W$, the proof of Proposition \ref{prop:trivial_mod} shows more generally that the homomorphism $\phi \colon\Gamma \to 1$ is $\langle W \rangle$-boundedly $n$-acyclic (see Remark \ref{W-bdd-acyclic}) if and only if $H^i_b(\Gamma; \phi^{-1}W) = 0$ for $1 \leq i \leq n$. 
\end{rem}

%%%%%%%%%%%%%%%%%%%%%%%%%%%%%%%%%%%%%%%%%%%%%%%%%%
\section{Proof of Theorems~\ref{main:thm:intro:amenable} and~\ref{main:thm:intro:finite}}\label{section:proof:amenable:maps}

\subsection{Theorem~\ref{main:thm:intro:amenable} for discrete groups} In this subsection, we will state and prove the analogue of Theorem \ref{main:thm:intro:amenable} in the case of maps $f \colon X \to Y$ which arise from homomorphisms $\phi \colon \Gamma \to K$ of discrete 
groups. 

Let $H$ be a subgroup of a discrete group $\Gamma$. We begin by recalling the definition of a \emph{left} $H$-\emph{invariant mean} on $\ell^\infty(\Gamma)$.

\begin{defi}
A \emph{left} $H$-\emph{invariant mean} on $\ell^\infty(\Gamma)$ is a linear functional $$m \colon \ell^\infty(\Gamma) \to \R$$
such that 
\begin{enumerate}
\item[(a)] $\inf_{g \in \, \Gamma} f(g) \leq m(f) \leq \sup_{g \in \, \Gamma} f(g)$ for every $f \in \, \ell^\infty(\Gamma)$;

\item[(b)] $m(h \cdot f) = m(f)$ for every $f \in \, \ell^\infty(\Gamma)$ and $h \in \, H$.
\end{enumerate}
Here $H$ acts on $\ell^\infty(\Gamma)$ by the restriction of the standard $\Gamma$-action:
$$
g \cdot f ( g_0) = f(g^{-1} g_0)
$$
for $g, g_0 \in \, \Gamma$ and $f \in \, \ell^\infty(\Gamma)$. 
\end{defi}

\begin{rem}
When $H = \Gamma$ the previous definition leads to the classical definition of amenability,
i.e., a group $\Gamma$ is amenable if it admits a left $\Gamma$-invariant mean.
\end{rem}

The existence of a left $H$-invariant mean on $\ell^\infty(\Gamma)$ is equivalent to the existence of a non-trivial $H$-invariant (linear) functional 
$\xi \in \ell^\infty(\Gamma)'$, where the latter denotes the \emph{topological} dual of $\ell^\infty(\Gamma)$ (cf. \cite[Lemma~3.2]{Frigerio:book}).
More precisely, we have the following:

\begin{prop}\label{lemma:invariant:continuous:linear:functional:same:invariant:mean}
Let $H$ be a subgroup of a discrete group $\Gamma$. Then the following are equivalent:
\begin{enumerate}
\item[(1)] $H$ is amenable.

\item[(2)] There exists a left $H$-invariant mean on $\ell^\infty(\Gamma)$.

\item[(3)] There exists a non-trivial $H$-invariant (continuous) functional $\xi \in \ell^\infty(\Gamma)'$.
\end{enumerate}
\end{prop}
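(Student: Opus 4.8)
The plan is to prove the cycle of implications $(1) \Rightarrow (2) \Rightarrow (3) \Rightarrow (1)$, where the genuinely new content lies in relating the amenability of the \emph{subgroup} $H$ to the existence of an $H$-invariant mean on the \emph{ambient} group algebra $\ell^\infty(\Gamma)$. The equivalence $(2) \Leftrightarrow (3)$ is essentially formal and was already flagged in the text: an $H$-invariant functional $\xi \in \ell^\infty(\Gamma)'$ can be normalized and symmetrized to produce a mean (rescale so that $\xi(\mathbf{1}) = 1$ and replace $\xi$ by its real part and then average against constants to force positivity), while a mean is in particular a bounded $H$-invariant functional by condition (a). So I would dispatch this direction quickly by citing the analogous statement \cite[Lemma~3.2]{Frigerio:book} for the case $H = \Gamma$ and observing that the argument only uses $H$-invariance.

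For $(1) \Rightarrow (2)$, I would start from an invariant mean $m_0 \colon \ell^\infty(H) \to \R$ on the amenable group $H$ itself and \emph{push it forward} along a choice of set-theoretic section of the coset projection $\Gamma \to H\backslash\Gamma$. Concretely, fix a transversal $T$ for the right cosets $H\backslash\Gamma$, so that every $g \in \Gamma$ is uniquely $g = h t$ with $h \in H$, $t \in T$. Given $f \in \ell^\infty(\Gamma)$ and $t \in T$, the function $h \mapsto f(h t)$ lies in $\ell^\infty(H)$, so I can define $m(f) \coloneqq \sup_{t \in T}$ or rather — to keep linearity — I would instead apply $m_0$ fiberwise and then collapse the $T$-direction. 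The cleanest route is: for each $t$, set $\varphi_f(t) \coloneqq m_0\big(h \mapsto f(ht)\big)$, giving $\varphi_f \in \ell^\infty(T)$, and then compose with \emph{any} mean on $\ell^\infty(T)$ (which exists trivially as $\sup$/$\inf$ bounds are all one needs for a single functional via Hahn--Banach). Left $H$-invariance of $m$ then reduces to left $H$-invariance of $m_0$ because left multiplication by $h' \in H$ permutes the fibers $Ht$ within themselves, acting only in the $H$-coordinate. Checking condition (a) is immediate from the corresponding bounds for $m_0$ and for the mean on $\ell^\infty(T)$.

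For $(3) \Rightarrow (1)$, I would argue that a non-trivial $H$-invariant $\xi \in \ell^\infty(\Gamma)'$ forces $H$ to be amenable by \emph{restricting attention to a single right coset}, or equivalently by restricting $\xi$ to the $H$-invariant subspace $\ell^\infty(H) \hookrightarrow \ell^\infty(\Gamma)$ extended by zero outside a chosen coset. The subtlety is that naively restricting $\xi$ might yield the zero functional; so the main obstacle is ensuring non-triviality survives the restriction. I would handle this by noting that non-triviality of $\xi$ means $\xi(f_0) \neq 0$ for some $f_0$, and after translating I can arrange this witness to be supported on (or detected by) the coset containing the identity. Using $H$-invariance, the resulting functional on $\ell^\infty(H)$ is $H$-invariant, and by $(3) \Leftrightarrow (2)$ applied internally to $H$ (i.e., the classical characterization of amenability for $H$) this gives amenability of $H$. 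I expect this last normalization — extracting a genuinely non-zero $H$-invariant functional on $\ell^\infty(H)$ from one on $\ell^\infty(\Gamma)$ — to be the one step requiring care, since it is where the interplay between the subgroup and the ambient group is most delicate; everything else follows from bookkeeping with the coset decomposition and the standard amenability criteria.
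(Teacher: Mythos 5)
Your $(1)\Rightarrow(2)$ is correct (and more self-contained than the paper, which simply quotes the classical fact that $H$ is amenable if and only if $\ell^\infty(\Gamma)$ carries a left $H$-invariant mean), and your plan for $(2)\Leftrightarrow(3)$ --- transporting the absolute case of \cite[Lemma~3.2]{Frigerio:book} by replacing $\Gamma$-invariance with $H$-invariance --- is exactly what the paper does. The genuine gap is in your $(3)\Rightarrow(1)$. You propose to precompose $\xi$ with the ``extension by zero'' embedding $\ell^\infty(H)\hookrightarrow\ell^\infty(\Gamma)$ onto a single coset, and to rescue non-triviality by translating a witness $f_0$ with $\xi(f_0)\neq 0$ into that coset. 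This cannot be made to work: when $[\Gamma:H]=\infty$ it can happen that $\xi$ annihilates \emph{every} bounded function supported on a single coset, so there is no witness to translate. Concretely, take $\Gamma=\Z^2$, $H=\Z\times\{0\}$, and let $\xi=m$ be a $\Gamma$-invariant mean on $\ell^\infty(\Gamma)$; this is non-trivial and in particular $H$-invariant. The translates of $\chi_{Ht}$ are pairwise disjoint and infinite in number and all receive the same value under $m$, so finite additivity and positivity force $m(\chi_{Ht})=0$; hence $|m(f)|\leq \|f\|_\infty\, m(\chi_{Ht})=0$ for every $f$ supported on $Ht$, and your restricted functional is identically zero. (Note also that translating $f_0$ by elements outside $H$ is not available to you, since $\xi$ is only assumed $H$-invariant.)

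The repair is to use the \emph{other} embedding: extend $u\in\ell^\infty(H)$ to $\bar u\in\ell^\infty(\Gamma)$ \emph{constantly along right cosets}, $\bar u(ht)\coloneqq u(h)$ for a fixed transversal $T$ of $H\backslash\Gamma$. This map is left $H$-equivariant and sends $\mathbf{1}_H$ to $\mathbf{1}_\Gamma$, so composing it with a left $H$-invariant \emph{mean} $m$ on $\ell^\infty(\Gamma)$ produces a left-invariant mean on $\ell^\infty(H)$, i.e.\ amenability of $H$. To obtain such an $m$ from a non-trivial $H$-invariant functional $\xi$ you must first pass through $(3)\Rightarrow(2)$: replace $\xi$ by its total variation $|\xi|$ in the Banach lattice $\ell^\infty(\Gamma)'$, which is still $H$-invariant (the $H$-action is by lattice isometries) and satisfies $|\xi|(\mathbf{1}_\Gamma)=\sv{\xi}>0$, then normalize. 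In other words, close the cycle as $(3)\Rightarrow(2)\Rightarrow(1)$ rather than attempting $(3)\Rightarrow(1)$ directly; the paper sidesteps all of this by citing $(1)\Leftrightarrow(2)$ from the literature and only proving $(2)\Leftrightarrow(3)$.
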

\begin{proof}
The equivalence $(1) \Leftrightarrow (2)$ is a classical result on the amenability of subgroups (see, for example,~\cite[Theorem~6]{Caprace_Monod}).
$(2) \Rightarrow (3)$ is obvious: simply set $\xi (f) := m(f)$, where $m \colon \ell^{\infty}(\Gamma) \to \R$ is a left $H$-invariant mean. The implication $(3) \Rightarrow (2)$ is shown \emph{verbatim} as in the classical absolute case, which can be found in~\cite{Frigerio:book}; indeed, the only difference in the present relative situation is working with $H$-invariance instead of $\Gamma$-invariance as in the absolute case.
\end{proof}

We are now ready to state and prove the version of Theorem~\ref{main:thm:intro:amenable} for discrete groups.
We refer the reader to Definition~\ref{def:amenable:maps} for the notion of amenable homomorphism.

\begin{thm}\label{thm:main:groups:amenable}
Let $\phi \colon \Gamma \to K$ be a surjective homomorphism of discrete groups and let $H$ denote the kernel of $\phi$. 
Then the following are equivalent:
\begin{enumerate}
\item[(1)] $\phi$ is an amenable homomorphism.
\item[(2)] For all dual normed $\mathbb{R}[\Gamma]$-modules $V$ the induced inflation map
$$
H^1_b(\phi ; \textup{I}_V) \colon H_b^1(K; V^H) \to H_b^1(\Gamma; V)
$$
is an isomorphism.
\item[(3)] $H$ is amenable.
\end{enumerate}
\end{thm}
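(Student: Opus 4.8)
The plan is to prove the cycle of implications $(1) \Rightarrow (2) \Rightarrow (3) \Rightarrow (1)$. The implication $(1) \Rightarrow (2)$ is immediate: condition (1) asserts that the inflation map is an isometric isomorphism in every degree for every dual normed $\R[\Gamma]$-module, so forgetting the metric and restricting to degree $1$ gives (2). For $(3) \Rightarrow (1)$, I would invoke the group-theoretic form of the \emph{Mapping Theorem} (Theorem~\ref{thm:mapping_thm}): applying the classifying space functor to $\phi$ produces a map $B\phi \colon B\Gamma \to BK$ whose homotopy fiber is a $K(H,1)$, since the fibration sequence $BH \to B\Gamma \to BK$ yields a path-connected fiber with fundamental group $\ker(\phi) = H$ and no higher homotopy. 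As $H$ is amenable, this homotopy fiber is amenable (Remark~\ref{rem:amenable_space}), and the \emph{Mapping Theorem} then shows that the inflation map $H^\bullet_b(\phi; \textup{I}_V)$ is an isometric isomorphism for every dual normed $\R[\Gamma]$-module $V$; that is, $\phi$ is amenable.

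The heart of the argument is the remaining implication $(2) \Rightarrow (3)$, which is a relative version of Johnson's characterization of amenability. By Proposition~\ref{lemma:invariant:continuous:linear:functional:same:invariant:mean}, it suffices to produce a non-trivial $H$-invariant functional in $\ell^\infty(\Gamma)'$. I would consider the Banach $\Gamma$-module $\ell^\infty(\Gamma)$ (with the left-translation action) together with the short exact sequence of Banach $\Gamma$-modules
$$
0 \to \R \xrightarrow{\iota} \ell^\infty(\Gamma) \xrightarrow{p} Q \to 0, \qquad Q \coloneqq \ell^\infty(\Gamma)/\R,
$$
where $\iota$ is the inclusion of the constant functions. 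Since $\iota$ has finite-dimensional (hence complemented) image, this sequence splits as a sequence of Banach spaces, so dualizing preserves exactness and produces a short exact sequence of dual normed $\R[\Gamma]$-modules
$$
0 \to Q' \to \ell^\infty(\Gamma)' \xrightarrow{\mathrm{ev}} \R \to 0,
$$
where $\mathrm{ev}(\xi) = \xi(\iota(1))$ is evaluation on the constant function $\mathbf 1$, and $Q'$ is the dual of $Q$, hence a dual normed $\R[\Gamma]$-module.

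I would then compare the long exact sequences in bounded cohomology (Theorem~\ref{thm:basic:properties:bdd:coho}(3)) associated to this sequence for the groups $\Gamma$ and $H$, related by the restriction maps. Naturality of the connecting homomorphisms gives a commutative square
$$
\xymatrix{
H^0_b(\Gamma; \R) \ar[r]^-{\delta_\Gamma} \ar[d]_-{\res^0} & H^1_b(\Gamma; Q') \ar[d]^-{\res^1} \\
H^0_b(H; \R) \ar[r]_-{\delta_H} & H^1_b(H; Q').
}
$$
Here $H^0_b(-;\R) = \R$ and $\res^0$ is the identity, so the class $1 \in H^0_b(H;\R)$ satisfies $\delta_H(1) = \res^1(\delta_\Gamma(1))$. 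By hypothesis (2) every degree-$1$ inflation map is an isomorphism, so Corollary~\ref{rem:1:amenable:1:restriction} (applied to the dual module $V = Q'$) forces $\res^1 \colon H^1_b(\Gamma; Q') \to H^1_b(H; Q')$ to be the zero map; hence $\delta_H(1) = 0$. Exactness of the long exact sequence for $H$ then provides a lift of $1$ to an element $m \in H^0_b(H; \ell^\infty(\Gamma)') = (\ell^\infty(\Gamma)')^H$ with $\mathrm{ev}(m) = m(\mathbf 1) = 1$. This $m$ is a non-trivial $H$-invariant functional on $\ell^\infty(\Gamma)$, so $H$ is amenable by Proposition~\ref{lemma:invariant:continuous:linear:functional:same:invariant:mean}.

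I expect the main obstacle to be this implication $(2) \Rightarrow (3)$, specifically the identification of the correct auxiliary module $Q' = (\ell^\infty(\Gamma)/\R)'$ and the verification that the hypothesis on the degree-$1$ inflation maps translates, via Corollary~\ref{rem:1:amenable:1:restriction} and Proposition~\ref{prop:sequence:restriction:inflation}, into the vanishing of precisely the restriction map that obstructs the existence of an invariant mean. The remaining points — checking that the dualized sequence is a genuine short exact sequence of Banach $\Gamma$-modules and that $\res^0$ is the identity on $H^0_b(-;\R) = \R$ — are routine.
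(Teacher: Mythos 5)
Your proof is correct and, at its core, coincides with the paper's: the cycle of implications is the same, $(3)\Rightarrow(1)$ is the \emph{Mapping Theorem} with coefficients, and $(2)\Rightarrow(3)$ runs through the same auxiliary dual module $(\ell^\infty(\Gamma)/\R)'$, the same appeal to Corollary~\ref{rem:1:amenable:1:restriction}, and the same Proposition~\ref{lemma:invariant:continuous:linear:functional:same:invariant:mean}. The only genuine difference is in how the invariant functional is produced. The paper writes down the Johnson cocycle $J(g_0,g_1)=\delta_{g_1}-\delta_{g_0}$, uses $\res^1[J]=0$ to get an explicit primitive $\alpha$ over $H$, and assembles the functional $\xi=\delta_1-\hat{\alpha}(1)$ by hand, checking its $H$-invariance directly. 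You instead extract the same data from the long exact sequence of $0\to(\ell^\infty(\Gamma)/\R)'\to\ell^\infty(\Gamma)'\to\R\to 0$: unwinding the connecting homomorphism (lift $1\in H^0_b(\Gamma;\R)$ to the equivariant $0$-cochain $g\mapsto\delta_g$) shows that $\delta_\Gamma(1)$ is exactly the Johnson class $[J]$, and your lift $m$ of $1\in H^0_b(H;\R)$ is the paper's $\xi$. What your packaging buys is the elimination of all cochain computations (non-triviality and $H$-invariance of the functional are automatic from exactness); the price is that you invoke one naturality statement that is not literally contained in Theorem~\ref{thm:basic:properties:bdd:coho}(3), whose naturality is in the coefficient sequence for a \emph{fixed} group: you need the connecting homomorphisms for $\Gamma$ and for $H$ to commute with restriction along $H\leq\Gamma$. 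This is true and standard --- both long exact sequences arise from short exact sequences of invariant cochain complexes, and $\Res^\bullet$ is a morphism between them, so the snake-lemma construction commutes --- but it should be stated and justified rather than folded into the word ``naturality''.
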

\begin{proof}
We will prove the implications
$$(3) \Rightarrow (1)\Rightarrow (2) \Rightarrow (3).$$

\medskip

\noindent \emph{Proof of} $(3) \Rightarrow (1)$. This is an algebraic version of the \emph{Mapping Theorem} with twisted coefficients given by dual normed $\R[\Gamma]$-modules (see, for example,~\cite[Corollary~7.5.10]{monod}). 

\medskip 

\noindent \emph{Proof of} $(1) \Rightarrow (2)$. This is obvious. 

\medskip 

\noindent \emph{Proof of} $(2) \Rightarrow (3)$. Our proof follows the classical approach to characterize amenability using the Johnson class in bounded cohomology~\cite{Johnson}, \cite[3.4]{Frigerio:book}. Recall that the Johnson class is constructed as follows. Let 
$(\ell^\infty(\Gamma) \slash \R)'$ denote the topological dual of the quotient space $\ell^\infty(\Gamma) \slash \R$, where we identify
$\R$ with the subspace of constant functions. 
Note that the space $(\ell^\infty(\Gamma) \slash \R)'$ can be identified with the subspace of functionals in $\ell^\infty(\Gamma)'$ which vanish on the constant functions. We consider the bounded $1$-cocycle
$$
J \in \, C_b^1(\Gamma; (\ell^\infty(\Gamma) \slash \R)')
$$
defined by $J(g_0, g_1) = \delta_{g_1} - \delta_{g_0}$. 
Here, for every $g \in \Gamma$, $\delta_g \in \ell^\infty(\Gamma)'$ denotes the Dirac functional at $g$:
$f \mapsto \delta_g(f) := f(g)$.
An easy computation~\cite[Section~3.4]{Frigerio:book} shows that $J$ is also $\Gamma$-invariant and hence it defines a class in bounded cohomology
$$
[J] \in H_b^1(\Gamma; (\ell^\infty(\Gamma) \slash \R)')
$$
called \emph{the Johnson class}. We are interested in the image of the Johnson class under the restriction map 
$$\res^1 \colon H^1_b(\Gamma;  (\ell^\infty(\Gamma) \slash \R)') \to H^1_b(H;  (\ell^\infty(\Gamma) \slash \R)').$$
By the definition of the restriction map, an explicit representative of $\res^1[J]$ is given by 
$$
\Res^1(J)(h_0, h_1) \coloneqq \delta_{h_1} - \delta_{h_0} \in \, C_b^1(H; (\ell^\infty(\Gamma) \slash \R)')^H.
$$
On the other hand, using assumption (2) and Corollary~\ref{rem:1:amenable:1:restriction}, it follows that $\res^1[J] = 0$. 
This means that there exists 
$$
\alpha \in C_b^0(H; (\ell^\infty(\Gamma) \slash \R)')^H
$$
such that $\delta \alpha = \res^1(J)$. Using the construction of $J$, we are going to show that there exists a non-trivial $H$-invariant functional 
$\xi \in \ell^\infty(\Gamma)'$. This will imply the required result in (3) by applying Proposition~\ref{lemma:invariant:continuous:linear:functional:same:invariant:mean}. For each $h\in H$, we define $\hat{\alpha}(h) \in \, \ell^\infty(\Gamma)'$ to be the 
(continuous) functional given by
$$
\hat{\alpha}(h)(f) = \alpha(h)(\pi(f))
$$
where $\pi \colon \ell^\infty(\Gamma) \to \ell^\infty(\Gamma)/\R$ denotes the $\Gamma$-equivariant quotient map. This allows us to define our candidate $\xi \in \ell^\infty(\Gamma)'$ by
$$
\xi \coloneqq \delta_1 - \hat{\alpha}(1).
$$
First, $\xi \in \ell^{\infty}(\Gamma)'$ is non-trivial: for every non-zero constant function $f$, we have $\hat{\alpha}(1)(f) = 0$ and $\delta_1(f) \neq 0$.
Hence, it remains to prove that $\xi$ is $H$-invariant. To this end, we note first that 
$\hat{\alpha}$ defines an element in $C_b^0(H; \ell^\infty(\Gamma)')^H$: for every $h, h' \in H$ and $f \in \, \ell^\infty(\Gamma)$,
\begin{align*}
(h' \cdot \hat{\alpha})(h)(f) &= \hat{\alpha}(h'^{-1} h) (h'^{-1} f) = \alpha(h'^{-1} h) (\pi(h'^{-1}f)) =  \alpha(h'^{-1} h) (h'^{-1} \pi(f)) \\
&=(h' \cdot \alpha)(h) (\pi(f)) = \alpha(h) (\pi(f)) = \hat{\alpha}(h) (f).
\end{align*}
Note that we used the fact that $\alpha$ is $H$-invariant.
Moreover, since $\delta \alpha = \res^1(J)$, we also have
$$
\delta_{h_1} - \delta_{h_0} = \hat{\alpha}(h_1) - \hat{\alpha}(h_0)
$$
for all $h_0, h_1 \in \, H$. In particular, the following equality holds for all $h \in H$, 
$$
\delta_h - \hat{\alpha}(h) = \delta_1 - \hat{\alpha}(1).
$$
These computations imply that for all $h \in \, H$, we have
\begin{align*}
h \cdot \xi &= h \cdot (\delta_1 - \hat{\alpha}(1)) = h \cdot \delta_1 - h \cdot \hat{\alpha}(1) \\
%&= \delta_h - h \cdot (\hat{\alpha}(1))
& = \delta_h - h \cdot \hat{\alpha}(h^{-1}h) \\
&= \delta_h - (h \cdot \hat{\alpha})(h) = \delta_h - \hat{\alpha}(h)  \\
&=\delta_1 - \hat{\alpha}(1) = \xi.
\end{align*}
This completes the proof of the existence of a non-trivial $H$-invariant (continuous) functional $\xi$ on $\ell^\infty(\Gamma)'$.
By Proposition~\ref{lemma:invariant:continuous:linear:functional:same:invariant:mean}, this implies that $H$ is amenable, as required.
\end{proof}

\subsection{Proof of Theorem~\ref{main:thm:intro:amenable}} We recall the statement (see Definition~\ref{def:amenable:maps}):

\begin{intro_thmA}
Let $f \colon X \to Y$ be a map of based path-connected spaces, let $f_* \colon \pi_1(X) \to \pi_1(Y)$ be the induced homomorphism between the fundamental groups and let $H$ denote its kernel. Let $F$ denote the homotopy fiber of $f$ and suppose that $F$ is path-connected (equivalently, $f_*$ is surjective). Then the following are equivalent:
\begin{enumerate}
\item[(1)] $f$ is an amenable map.
\item[(2)] For all dual normed $\mathbb{R}[\pi_1(X)]$-modules $V$, the induced inflation map
$$
H^1_b(f ; \textup{I}_V) \colon H_b^1(Y; V^H) \to H_b^1(X; V)
$$
is an isomorphism.
\item[(3)] $F$ is amenable. 
\end{enumerate}
\end{intro_thmA}
\begin{proof}
By Proposition \ref{prop:compare_amenable}, conditions (1) and (2) of Theorem \ref{main:thm:intro:amenable} are equivalent respectively to the conditions (1) and (2) of Theorem \ref{thm:main:groups:amenable} for $f_*$. Moreover, the long exact sequence of homotopy groups 
$$\cdots \to \pi_2(Y) \to \pi_1(F) \to \pi_1(X) \xrightarrow{f_*} \cdots$$
shows that the kernel of the surjective homomorphism $\pi_1(F) \to \mathrm{ker}(f_*)$ is abelian, therefore, also amenable. 
As already discussed in Remark~\ref{rem:amenable_space}, condition (3) is equivalent to the condition that $\pi_1(F)$ is amenable. It follows that the condition (3) of Theorem \ref{main:thm:intro:amenable} is equivalent to the condition of Theorem \ref{thm:main:groups:amenable}(3) for $f_*$. Then the result follows directly from Theorem \ref{thm:main:groups:amenable}. 
\end{proof}

\subsection{Proof of Theorem \ref{main:thm:intro:finite}} In the case of arbitrary coefficients, the analogue of Theorem \ref{thm:main:groups:amenable} leads to a characterization of surjective homomorphisms whose kernel is finite. We recall 
the statement of Theorem \ref{main:thm:intro:finite}:

\begin{intro_thmB}
Let $\phi \colon \Gamma \to K$ be a surjective homomorphism of discrete groups and let $H$ denote the kernel of $\phi$. 
Then the following are equivalent:
\begin{enumerate}
\item[(1)] For all Banach $\Gamma$-modules $V$, the induced inflation map
$$
H^{\bullet}_b(\phi ; \textup{I}_V) \colon H_b^{\bullet}(K; V^H) \to H_b^{\bullet}(\Gamma; V)
$$
is an isometric isomorphism. 
\item[(2)] For all Banach $\Gamma$-modules $V$, the induced inflation map
$$
H^1_b(\phi ; \textup{I}_V) \colon H_b^1(K; V^H) \to H_b^1(\Gamma; V)
$$
is an isomorphism.
\item[(3)] $H$ is finite.
\end{enumerate}
\end{intro_thmB}
\begin{proof}
We will prove the implications
$$(3) \Rightarrow (1)\Rightarrow (2) \Rightarrow (3).$$

\medskip

\noindent \emph{Proof of} $(3) \Rightarrow (1)$. Since finite subgroups are compact, this readily follows from the corresponding statement for topological groups in~\cite[Proposition~8.5.6]{monod}. 

\medskip 

\noindent \emph{Proof of} $(1) \Rightarrow (2)$. This is obvious. 

\medskip 

\noindent \emph{Proof of} $(2) \Rightarrow (3)$. Our proof follows Frigerio's characterization of finite groups~\cite[Section~3.5]{Frigerio:book}.
Let $\ell^1(\Gamma)$ be the Banach $\Gamma$-module of summable real functions on $\Gamma$ with countable support. We restrict our attention to the kernel of the summation function
$$
\sigma \colon \ell^1(\Gamma) \to \R, \quad \quad f \mapsto \sum_{g \in \, \Gamma} f(g),
$$
which is a Banach $\Gamma$-submodule of $\ell^1(\Gamma)$, denoted by $\ell^1_0(\Gamma)$, and consider the analogous version of the Johnson class in this setting. More precisely, let $J \in \, C_b^1(\Gamma; \ell^1_0(\Gamma))$ be the cocycle 
defined by
$$J(g_0, g_1) = \delta_{g_1} - \delta_{g_0}, \quad \quad \mbox{for all } g_0, g_1 \in \, \Gamma,$$
where $\delta_g \in \, \ell^1(\Gamma)$ denotes the characteristic function for $\{g\} \subset \Gamma$. Since $J$ is $\Gamma$-invariant (see~\cite[Section~3.5]{Frigerio:book}), it defines a class in bounded cohomology $[J] \in \, H^1_b(\Gamma; \ell^1_0(\Gamma))$.
As in the proof of Theorem~\ref{thm:main:groups:amenable}, we are going to study the image of the class 
$[J]$ under the restriction map 
$$\res^1 \colon H^1_b(\Gamma;  \ell_0^1(\Gamma)) \to H^1_b(H; \ell_0^1(\Gamma)).$$
By assumption (2) and Corollary~\ref{rem:1:amenable:1:restriction} we know that $\res^1[J] = 0$, so there exists 
$$
\alpha \in C_b^0(H; \ell^1_0(\Gamma))^H
$$
such that 
\begin{equation}\label{eq:finite:group:proof} \tag{$\dagger$}
\alpha (h_1) - \alpha(h_0) = \delta_{h_1} - \delta_{h_0}, \quad \quad \mbox{for all }  h_0, h_1 \in \, H.
\end{equation}
We can then use $\alpha$ to define a non-trivial $H$-invariant summable function $$\xi \coloneqq \delta_1 - \alpha(1) \in \, \ell^1(\Gamma).$$
Note that $\xi$ is non-trivial since $\sigma(\xi) = \sigma(\delta_1) - \sigma(\alpha(1)) = 1$. Moreover, for every $h \in \, H$, we have
$$
h \cdot \xi = h \cdot (\delta_1 - \alpha(1)) = h \cdot \delta_1 - h \cdot \alpha(1) = \delta_h - \alpha(h) = \delta_1 - \alpha(1) = \xi,
$$
where we used the $H$-invariance of $\alpha$ and the formula~\eqref{eq:finite:group:proof}. This shows that $\xi \in \, \ell^1(\Gamma)^H$.

Since $\xi$ is $H$-invariant, it must be constant on each coset of $H$ in $\Gamma$. Combining this fact with the non-triviality of 
$\xi \in \, \ell^1(\Gamma)$, it follows that the group $H$ must be finite.
\end{proof}

\begin{rem}\label{rem:finite:groups:spaces}
 It would be interesting to find a suitable generalization of Theorem \ref{main:thm:intro:finite} in the context of topological spaces. Note that the argument that we used 
to deduce Theorem \ref{main:thm:intro:amenable} from Theorem \ref{thm:main:groups:amenable} cannot be applied in the same way for two related reasons. First, given a map $f \colon X \to Y$ (as in Theorem \ref{main:thm:intro:amenable}), there is an exact sequence of groups 
$$\cdots \to \pi_2(Y) \to \pi_1(F) \to \ker(f_*) \to 1;$$
therefore, whether the homomorphism $\pi_1(F) \to \ker(f_*)$ satisfies the conditions of Theorem \ref{main:thm:intro:finite} depends also on the homomorphism $\pi_2(Y) \to \pi_1(F)$. Second, it is not known whether the inflation maps associated with the canonical map $c_X \colon X \to B \pi_1(X)$ are isomorphisms for arbitrary Banach $\pi_1(X)$-modules (cf. Proposition \ref{prop:compare_amenable}). In fact, it seems more likely that this would only hold when the higher homotopy groups of $X$ are finite. Similarly, it seems likely that the analogous characterization to Theorem B for topological spaces would characterize the maps $f \colon X \to Y$ whose homotopy fiber $F$ has \emph{finite} homotopy groups. In future work, we aim to address these questions about coefficients in general Banach $\pi_1$-modules and explore possible connections with properties of the higher homotopy groups.
\end{rem}

%%%%%%%%%%%%%%%%%%%%%%%%%%%%%%%%%%%%%%%%%%%%%%%%%%%%%%%%%%%
\section{Proof of Theorem~\ref{main:thm:intro}}\label{sec:proofs}

\subsection{Theorem~\ref{main:thm:intro} for discrete groups} We will first prove the following algebraic version of Theorem \ref{main:thm:intro} for discrete groups
(see Definition~\ref{def:intro:bdd:cohom:equiv}).

%\mmcomm{Sometimes we write $I_{1}V$ for the induction module, in this case do we want $1$ without brackets?}

\begin{thm}\label{thm:main:groups}
Let $\phi \colon \Gamma \to K$ be a homomorphism of discrete groups and let $H$ denote its kernel. Let $n \geq 0$ be an integer or $n=\infty$. Then the following are equivalent:
\begin{enumerate}
\item[(1)] $\phi$ is boundedly $n$-acyclic.
\item[(2)] The induced restriction map
$$H^i_b(\phi; V) \colon H_b^i(K; V) \to H_b^i(\Gamma; \phi^{-1}V)$$
is surjective for $0 \leq i \leq n$ and every $\R$-generated Banach K-module $V$.
\item[(3)] $\phi$ is surjective and $H^i_b(\Gamma; \phi^{-1}V) = 0$ for $1 \leq i \leq n$ and every relatively injective $\R$-generated Banach $K$-module $V$.
\item[(4)]  $\phi$ is surjective and $H$ is a boundedly $n$-acyclic group.
\end{enumerate}
\end{thm}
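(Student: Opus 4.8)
The plan is to establish the cyclic chain of implications $(1) \Rightarrow (2) \Rightarrow (3) \Rightarrow (4) \Rightarrow (1)$. The implication $(1) \Rightarrow (2)$ is immediate, since an isomorphism is in particular surjective. The genuinely new content lies in $(4) \Rightarrow (1)$; the two intermediate steps $(2) \Rightarrow (3)$ and $(3) \Rightarrow (4)$ are short arguments using relatively injective coefficients together with the Eckmann--Shapiro lemma.

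For $(2) \Rightarrow (3)$, I would first extract surjectivity of $\phi$ from degree $0$: the restriction map in degree $0$ is the inclusion $V^K \hookrightarrow V^{\phi(\Gamma)}$ (Theorem~\ref{thm:basic:properties:bdd:coho}(1)), and applying the surjectivity hypothesis to $V = \ell^\infty(K, \R)$ forces $\phi(\Gamma) = K$, as otherwise there would exist nonconstant $\phi(\Gamma)$-invariant bounded functions on $K$. For the vanishing clause, if $V$ is relatively injective then $H^i_b(K; V) = 0$ for $i \geq 1$ by Theorem~\ref{thm:basic:properties:bdd:coho}(2), so the assumed surjectivity of $H^i_b(K; V) \to H^i_b(\Gamma; \phi^{-1}V)$ forces $H^i_b(\Gamma; \phi^{-1}V) = 0$ for $1 \leq i \leq n$. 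For $(3) \Rightarrow (4)$ (surjectivity of $\phi$ being part of the hypothesis in $(3)$), I would test $(3)$ on the single coefficient module $V = \mathbf{I}_{\{1\}}^K \R = \ell^\infty(K, \R)$, which is both $\R$-generated and relatively injective (being the degree-zero term of the standard resolution of $\R$). By Proposition~\ref{prop:coeff:isomorphic} its pullback satisfies $\phi^{-1}V \cong \mathbf{I}_H^\Gamma \R$, so the Eckmann--Shapiro lemma (Proposition~\ref{prop:shapiro1}) identifies $H^i_b(\Gamma; \phi^{-1}V) \cong H^i_b(H; \R)$; the vanishing granted by $(3)$ then reads exactly as $H \in \vbcr_n$.

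The core step is $(4) \Rightarrow (1)$. Assuming $\phi$ surjective and $H$ boundedly $n$-acyclic, I would fix an $\R$-generated module $V = \ell^\infty(S, \R)$ and pull back the standard resolution of $V$ over $K$ to a resolution of $\phi^{-1}V$ over $\Gamma$, namely $0 \to \phi^{-1}V \to \phi^{-1}C^0_b(K; V) \to \phi^{-1}C^1_b(K; V) \to \cdots$. Each term is a dual normed $\R[\Gamma]$-module, so Proposition~\ref{prop:acyclic_res} applies provided each $\phi^{-1}C^j_b(K; V)$ is boundedly $n$-acyclic over $\Gamma$. To verify this I would identify $C^j_b(K; V) = \ell^\infty(K^{j+1} \times S, \R)$ and reindex the (free) diagonal $K$-action to exhibit $K^{j+1} \times S \cong K \times B$ as $K$-sets, with $K$ acting by left translation on the first factor; hence $C^j_b(K; V) \cong \mathbf{I}_{\{1\}}^K \ell^\infty(B)$ for the trivial module $\ell^\infty(B)$. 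Proposition~\ref{prop:coeff:isomorphic} then gives $\phi^{-1}C^j_b(K; V) \cong \mathbf{I}_H^\Gamma \ell^\infty(B)$, and Eckmann--Shapiro together with Proposition~\ref{prop:trivial_mod} yields $H^i_b(\Gamma; \phi^{-1}C^j_b(K; V)) \cong H^i_b(H; \ell^\infty(B)) = 0$ for $1 \leq i \leq n$. Thus Proposition~\ref{prop:acyclic_res} computes $H^\bullet_b(\Gamma; \phi^{-1}V)$ in degrees $\leq n$ (and gives injectivity in degree $n+1$) from the complex of $\Gamma$-invariants $(\phi^{-1}C^\bullet_b(K; V))^\Gamma = C^\bullet_b(K; V)^{\phi(\Gamma)} = C^\bullet_b(K; V)^K$, which is precisely the complex computing $H^\bullet_b(K; V)$. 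Finally, since the pulled-back resolution is strong, Remark~\ref{rem:acyclic_res} identifies the resulting comparison map with the one induced on $\Gamma$-invariants by precomposition with $\phi$, that is, with the restriction map $H^\bullet_b(\phi; V)$; this completes $(4) \Rightarrow (1)$.

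I expect the main obstacle to be the bookkeeping in $(4) \Rightarrow (1)$: first, correctly recognizing each pulled-back cochain module as an induction of a \emph{trivial} module, so that Proposition~\ref{prop:coeff:isomorphic}, Eckmann--Shapiro, and Proposition~\ref{prop:trivial_mod} combine to give bounded $n$-acyclicity over $\Gamma$; and second, checking that the abstract isomorphism produced by Proposition~\ref{prop:acyclic_res} really is the restriction map, rather than merely some isomorphism. This last point is precisely where the strongness of the resolution and the naturality recorded in Remark~\ref{rem:acyclic_res} are essential.
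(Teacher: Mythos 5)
Your proposal is correct and follows essentially the same route as the paper: the same cycle of implications, the same test modules (relatively injective $\R$-generated ones, and $\ell^\infty(K,\R)=\mathbf{I}_{\{1\}}^K\R$ via Propositions \ref{prop:coeff:isomorphic} and \ref{prop:shapiro1}), and the same core argument for $(4)\Rightarrow(1)$, namely pulling back the standard resolution of $V$ over $K$ and showing each term is boundedly $n$-acyclic over $\Gamma$ by combining Eckmann--Shapiro with Proposition \ref{prop:trivial_mod}, then invoking Proposition \ref{prop:acyclic_res} and Remark \ref{rem:acyclic_res}. The only (harmless) deviations are cosmetic: you detect surjectivity of $\phi$ with $\ell^\infty(K,\R)$ rather than $\ell^\infty(K/\mathrm{im}\,\phi,\R)$, and you identify each $W_j=\phi^{-1}C^j_b(K;V)$ directly as $\mathbf{I}_H^\Gamma$ of a trivial module, where the paper instead uses the recursion $\mathbf{I}_H^\Gamma W_j\cong W_{j+1}$.
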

\begin{proof}
We will prove the implications 
$$(1) \Rightarrow (2) \Rightarrow (3) \Rightarrow (4) \Rightarrow (1).$$

\medskip

\noindent \emph{Proof of} $(1) \Rightarrow (2)$. This is obvious. 

\medskip 

\noindent \emph{Proof of} $(2) \Rightarrow (3)$. Suppose that $\phi$ induces a surjective map  
$$H^i_b(\phi; V) \colon H_b^i(K; V) \to H_b^i(\Gamma; \phi^{-1}V)$$
for $0 \leq i \leq n$ and every $\R$-generated Banach $K$-module $V$. Let $V$ be a \emph{relatively injective} $\R$-generated Banach $K$-module. 
Then $H^0_b(K; V) \cong V^K$ and $H_b^i(K; V) = 0$ for all $i \geq 1$~(see~Theorem~\ref{thm:basic:properties:bdd:coho}(2)). 
Therefore,
$$
H_b^i(\Gamma; \phi^{-1} V) = 0
$$
for $1 \leq i \leq n$. It remains to prove that $\phi$ is surjective. Let $K' \coloneqq \mathrm{im}(\phi)$ denote the image of $\phi$. Consider the $K$-set of left cosets $ J \coloneqq K \slash K'$ and the $\R$-generated Banach $K$-module $V\coloneqq \ell^{\infty}(J, \R)$. Then we have (using Theorem~\ref{thm:basic:properties:bdd:coho}(1)):
$$H^0_b(K; V) \cong V^K \cong \R \text{ \  and  \ } H^0_b(\Gamma; \phi^{-1}V) \cong (\phi^{-1}V)^{\Gamma} \cong V^{K'}.$$ 
The map $H^0_b(K; V) \to H^0_b(\Gamma; \phi^{-1}V)$ is surjective (if and) only if $J$ is a transitive $K'$-set. This happens exactly when $J$ is a singleton, that is, only when $\phi$ is surjective. 

\medskip

\noindent \emph{Proof of} $(3) \Rightarrow (4)$. Assuming (3), we have that $\phi$ induces an isomorphism
$$H^i_b(\phi; V) \colon H_b^i(K; V) \to H_b^i(\Gamma; \phi^{-1}V)$$
for every relatively injective $\R$-generated Banach $K$-module $V$ and $0 \leq i \leq n$. To see this, note that $H^i_b(K; V) = 0$ for $i \geq 1$ (by Theorem~\ref{thm:basic:properties:bdd:coho}(2)) and $H^0_b(\phi; V)$ is an isomorphism for every $\R$-generated Banach $K$-module $V$ since $\phi$ is surjective. Now we consider the dual normed $\R[K]$-module
$$V \coloneqq \mathbf{I}_{\{1\}}^K \R = \ell^\infty(K, \R).$$
Viewing $\R$ as a dual normed $\R[K]$-module endowed with the trivial action, we can assume that $V$ is endowed with the usual left action by $K$, as discussed in Remark~\ref{rem:induction:module:action:Monod}. In particular, $\mathbf{I}_{\{1\}}^K \R$ is an $\R$-generated Banach $K$-module.
Moreover, the $\R[K]$-module $V$ is relatively injective because $K$ acts freely on itself~\cite[Definition~4.20 and Lemma~4.22]{Frigerio:book}. Therefore, $\phi$ induces isomorphisms
$$
\phi^* = H^i_b(\phi; V) \colon H^i_b(K; V) \to H^i_b(\Gamma; \phi^{-1} V)
$$
for $0 \leq i \leq n$. Next we consider the diagram of groups
$$
\xymatrix{
H \ar@{}[r]|-*[@]{\subset} \ar[d]_{\phi |_H} & \Gamma \ar[d]^{\phi} \\
\{1\} \ar@{}[r]|-*[@]{\subset}  & K.
}
$$ 
By Proposition~\ref{prop:shapiro2}, Proposition \ref{prop:coeff:isomorphic} and Proposition \ref{prop:shapiro1}, we have a commutative diagram as 
follows
$$
\xymatrix@C=1em{
H^i_b(\{1\}; \R) \ar[r]^{\textbf{i}}_{\cong} \ar[dd]_-{(\phi_{|H})^*} & H^i_b(K; V) \ar[rd]^-{\phi^*}_{\cong} \\
&& H^i_b(\Gamma; \phi^{-1} V) \ar[dl]^-{H^i_b(\textup{id}_\Gamma, \Psi)}_{\cong} \\
H^i_b(H; \R) \ar[r]^(0.36){\textbf{i}}_(.36){\cong} & H^i_b(\Gamma; \ell^\infty(\Gamma, \R)^H) \ ,
}
$$
for every $0 \leq i \leq n$ (where all the coefficients are $\R$-generated Banach modules as noticed in Remark~\ref{rem:all:coeff:shapiro:are:generated}).
This diagram readily implies that $H^i_b(H; \R)$ is isomorphic to $H^i_b(\{1\}; \R)$ for $0 \leq i \leq n$, therefore $H^0_b(H; \R) = \R$ and $H^i_b(H; \R) = 0$ for all $1 \leq i \leq n$, as required. 

\medskip

\noindent \emph{Proof of} $(4) \Rightarrow (1)$.  Let $V$ be an $\R$-generated Banach $K$-module. We are required to show that the induced restriction map
$$
 H^i_b(\phi; V) \colon H^i_b(K; V) \to H^i_b(\Gamma; \phi^{-1} V)
 $$
 is an isomorphism for $0 \leq i \leq n$ and injective for $i = n +1$. Let $D_j$ denote the relatively injective $\R$-generated Banach $K$-module $\ell^{\infty}(K^{j+1}, V)$ and let $W_{j} = \phi^{-1}D_j$ denote the corresponding $\R$-generated Banach $\Gamma$-module (see Remark~\ref{rem:generated:module:examples}). We also write $D_{-1} = V$ and $W_{-1} = \phi^{-1}V$, respectively. 
 
Since $\phi \colon \Gamma \to K$ is surjective, there is a canonical (isometric) isomorphism between the normed modules $C^j_b(K; V)^K$ and $W_j^\Gamma$ for all $j \geq 0$. Therefore it suffices to show that the resolution (i.e. acyclic cochain complex) of Banach $\Gamma$-modules:
\begin{equation} \label{resolution} \tag{*} 
0 \to W_{-1} \to W_0 \to W_1 \to \cdots
\end{equation}
can be used to compute the bounded cohomology of $\Gamma$ with coefficients in $\phi^{-1} V$ in the required range. Since the previous resolution is strong~\cite[Lemma~7.5.5 and Example~2.1.2(i)]{monod}, using Proposition \ref{prop:acyclic_res}
and Remark~\ref{rem:acyclic_res}, it suffices to prove that the Banach $\Gamma$-module $W_j$ satisfies
\begin{equation*}\label{eq:module:zero:acyclicity}
H^i_b(\Gamma; W_j) = 0
\end{equation*}
for $1 \leq i \leq n$ and $j \geq 0$. Moreover, since $H \unlhd \Gamma$ is the kernel of $\phi$, each module $W_j$ is a trivial $\R$-generated Banach $H$-module. (We will use the same 
notation $W_j$ for the corresponding $\R[H]$-module in order to simplify the notation.) Therefore, by Proposition \ref{prop:trivial_mod}, we have $H^i_b(H; W_j) = 0$ for $1 \leq i \leq n$ and $j \geq -1$. Using the Eckmann--Shapiro lemma (Proposition~\ref{prop:shapiro1}), we obtain the following identifications for all $i \geq 0$ and $j \geq -1$:
$$
\mathbf{i} \colon H^i_b(H; W_j) \cong H_b^i(\Gamma; \textbf{I}_H^\Gamma W_j).
$$
Since $H$ acts trivially on $W_j$ and $\phi$ is surjective, we may identify these induction modules as follows (Remark~\ref{rem:induction:module:action:Monod}),
$$\textbf{I}_H^{\Gamma} W_j \cong \ell^\infty(K, \ell^\infty(K^{j+1}, \phi^{-1} V)) \cong \ell^\infty(K^{j+2}, \phi^{-1} V) \cong W_{j+1},$$
which then readily implies
% (Theorem~\ref{thm:basic:properties:bdd:coho}(3))
$$
0 = H^i_b(H; W_j) \cong H_b^i(\Gamma; \textbf{I}_H^\Gamma W_j) \cong H_b^i(\Gamma; W_{j+1})
$$
for all $1 \leq i \leq n$ and $j \geq -1$. This finishes the proof of (4) $\Rightarrow$ (1) and completes the proof of the theorem.
\end{proof}

\begin{example}
The following is a well-known open question: Does every surjective homomorphism $\phi \colon \Gamma \to K$ between discrete groups induce injective 
maps $H^\bullet_b(\phi) \colon H^\bullet_b(K; \mathbb{R}) \to H^\bullet_b(\Gamma; \mathbb{R})$ in all degrees? 
Bouarich~\cite{Bouarich:exact} proved that this is always the case in degree $2$ and Huber~\cite{Huber} showed that $H_b^2(\phi)$ is also isometric. 
On the other hand, by applying Theorem~\ref{thm:main:groups} $(4) \Rightarrow (1)$, we conclude that the map 
$$H^2_b(\phi; V) \colon H^2_b(K; V) \to H^2_b(\Gamma; \phi^{-1}V)$$
is injective for every $\R$-generated Banach $K$-module $V$; this is because every discrete group is $1$-acyclic~\cite[Section~2.1]{Frigerio:book}. See 
also \cite[Theorem 12.4.2]{monod}.
\end{example}

\subsection{Applications to boundedly acyclic groups}
We may apply Theorem~\ref{thm:main:groups} to study the hereditary properties of the class $\vbcr_n$ of boundedly $n$-acyclic groups. We recall that this class of groups is not closed under taking subgroups in general (see Example \ref{example:vbcr}). On the other hand, we may characterize the situation for normal subgroups as an immediate consequence of Theorem~\ref{thm:main:groups} $(3) \Leftrightarrow (4)$.

\begin{cor}\label{cor:intro:normal:subgrps:vbcr}
Let $n \geq 1$ be an integer or $n = \infty$. Let $\Gamma$ be a boundedly $n$-acyclic group, let $H \unlhd \Gamma$ be a normal subgroup, and let $\phi \colon \Gamma \to \Gamma \slash H$ be the quotient homomorphism. Then $H$ is a boundedly $n$-acyclic group if and only if 
$$H^{i}_b(\Gamma; \phi^{-1}V) = 0$$ for every relatively injective $\R$-generated Banach $\Gamma \slash H$-module $V$ and $1 \leq i \leq n$.
\end{cor}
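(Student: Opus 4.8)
The plan is to specialize Theorem~\ref{thm:main:groups} to the quotient homomorphism $\phi \colon \Gamma \to \Gamma \slash H$. This homomorphism has kernel exactly $H$ and target $K \coloneqq \Gamma \slash H$, and it is automatically surjective, being a quotient map. With these identifications in place, I would simply invoke the equivalence $(3) \Leftrightarrow (4)$ of Theorem~\ref{thm:main:groups}.

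Concretely, the first step is to observe that, because $\phi$ is surjective, the surjectivity clauses appearing in both statement~(3) and statement~(4) of Theorem~\ref{thm:main:groups} are automatically satisfied. Consequently, for this particular $\phi$, condition~(3) reduces precisely to the requirement that $H^i_b(\Gamma; \phi^{-1}V) = 0$ for every relatively injective $\R$-generated Banach $\Gamma \slash H$-module $V$ and every $1 \leq i \leq n$, while condition~(4) reduces precisely to the requirement that $H$ be a boundedly $n$-acyclic group. The equivalence $(3) \Leftrightarrow (4)$ then yields the asserted biconditional directly.

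I expect no genuine obstacle here, since the argument is a direct specialization rather than a new construction; the one point worth verifying is that the coefficient modules match up, namely that the relatively injective $\R$-generated Banach $K$-modules of Theorem~\ref{thm:main:groups}(3) are exactly those appearing in the corollary once $K$ is identified with $\Gamma \slash H$. It is worth remarking that the standing hypothesis $\Gamma \in \vbcr_n$ is not logically required for the stated equivalence---it follows already from $(3) \Leftrightarrow (4)$ applied to an arbitrary $\Gamma$---but it situates the corollary within the study of the hereditary behaviour of the class $\vbcr_n$ under passage to normal subgroups.
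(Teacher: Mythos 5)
Your proposal is correct and matches the paper's own argument, which likewise obtains the corollary as an immediate consequence of the equivalence $(3) \Leftrightarrow (4)$ of Theorem~\ref{thm:main:groups} applied to the surjective quotient homomorphism $\phi \colon \Gamma \to \Gamma / H$. Your side remark that the hypothesis $\Gamma \in \vbcr_n$ is not needed for the stated equivalence is also accurate.
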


Moreover, using Theorem~\ref{thm:main:groups} $(1)\Leftrightarrow(4)$, we also easily deduce the following result about the relationship between boundedly acyclic groups and group extensions. 

\begin{cor}\label{cor:intro:extension:grps:vbcr}
Let $n \geq 1$ be an integer or $n = \infty$. Let $1 \to H \to \Gamma \to K \to 1$ be an extension of discrete groups. 
Suppose that $H \in \, \vbcr_n$. Then, we have that 
$$
\Gamma \in \, \vbcr_n \quad \Longleftrightarrow \quad K \in \, \vbcr_n.
$$
%Then the following statements hold:
%\begin{enumerate}
%\item If $H, K \in \vbcr_n$, then $\Gamma$ also lies in $\vbcr_n$.
%\item If $H, \Gamma \in \vbcr_n$, then $K$ also lies in $\vbcr_n$.
%\end{enumerate}
\end{cor}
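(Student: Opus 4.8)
The plan is to deduce the corollary directly from the equivalence $(1) \Leftrightarrow (4)$ in Theorem~\ref{thm:main:groups}, applied to the quotient homomorphism $\phi \colon \Gamma \to K$ whose kernel is $H$. Since the extension $1 \to H \to \Gamma \to K \to 1$ makes $\phi$ surjective with kernel $H$, and since $H \in \vbcr_n$ by hypothesis, condition (4) of Theorem~\ref{thm:main:groups} is satisfied. Hence $\phi$ is boundedly $n$-acyclic, that is, the restriction map
$$
H^i_b(\phi; V) \colon H^i_b(K; V) \to H^i_b(\Gamma; \phi^{-1}V)
$$
is an isomorphism for $i \leq n$ (and injective for $i = n+1$) for every $\R$-generated Banach $K$-module $V$.

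The second step is to specialize the coefficients. I would take $V = \R$, regarded as the $\R$-generated Banach $K$-module $\ell^{\infty}(S, \R)$ associated to a one-point $K$-set $S$ endowed with the trivial $K$-action. Then the pullback $\phi^{-1}V = \R$ is the trivial Banach $\Gamma$-module, so the bounded $n$-acyclicity of $\phi$ yields isomorphisms
$$
H^i_b(K; \R) \xrightarrow{\ \cong\ } H^i_b(\Gamma; \R)
$$
for all $0 \leq i \leq n$.

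Finally, I would read off the equivalence from the definition of $\vbcr_n$ (Definition~\ref{defi:intro:VBCR}): a group $G$ lies in $\vbcr_n$ precisely when $H^i_b(G; \R) = 0$ for $1 \leq i \leq n$. Since the isomorphisms above identify $H^i_b(K; \R)$ with $H^i_b(\Gamma; \R)$ throughout this range, one side vanishes for all $1 \leq i \leq n$ if and only if the other does, which is exactly the asserted equivalence $\Gamma \in \vbcr_n \Leftrightarrow K \in \vbcr_n$.

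I do not anticipate any serious obstacle here, as the entire content is already packaged in Theorem~\ref{thm:main:groups}; the only point requiring a moment's care is to verify that the trivial module $\R$ is genuinely $\R$-generated and that its restriction along $\phi$ is again the trivial $\Gamma$-module $\R$, so that it is precisely the trivial-coefficient bounded cohomology of $\Gamma$ and $K$ that is being compared.
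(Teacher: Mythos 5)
Your proposal is correct and matches the paper's intended argument: the paper derives this corollary precisely as an immediate consequence of the equivalence $(1) \Leftrightarrow (4)$ of Theorem~\ref{thm:main:groups} applied to the quotient homomorphism $\Gamma \to K$, and your specialization to the trivial $\R$-generated module $\R = \ell^{\infty}(\{\ast\}, \R)$ is exactly the step that is left implicit there. Nothing is missing.
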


\begin{rem}
%After this paper was written, 
There exists a boundedly acyclic group $\Gamma$ that contains a normal subgroup whose bounded cohomology is infinite dimensional in all degrees $\geq 2$~\cite[Theorem~1.5]{fflm2} and such that the quotient is still boundedly acyclic.
\end{rem}

\begin{example}
Corollary~\ref{cor:intro:extension:grps:vbcr} allows us to construct new discrete groups in $\vbcr_n$ by taking extensions of a group in $\vbcr_n$ by the lattices described in Example~\ref{example:vbcr:n}.
\end{example}

\subsection{Proof of Theorem~\ref{main:thm:intro}} We recall the statement:

\begin{intro_thmC}\label{main:thm} 
Let $f \colon X \to Y$ be a map between based path-connected spaces, let $F$ denote its homotopy fiber, and let $n \geq 0$ be an integer or $n = \infty$.  We denote by $f_* \colon \pi_1(X) \to \pi_1(Y)$ the induced homomorphism between the fundamental groups. Then the following are equivalent:
\begin{enumerate}
\item[(1)] $f$ is boundedly $n$-acyclic.
\item[(2)] The induced restriction map 
$$H^i_b(f; V) \colon H_b^i(Y; V) \to H_b^i(X; f_*^{-1}V)$$ 
is surjective for $0 \leq i \leq n$ and every $\R$-generated Banach $\pi_1(Y)$-module $V$.
\item[(3)] $F$ is path-connected and $H^i_b(X; f_*^{-1}V) = 0$ for $1 \leq i \leq n$ and every relatively injective $\R$-generated Banach $\pi_1(Y)$-module $V$.
\item[(4)]  $F$ is boundedly $n$-acyclic, that is, $H^0_b(F; \mathbb{R}) \cong \mathbb{R}$ and $H^i_b(F; \mathbb{R}) = 0$ for $1 \leq i \leq n$.
\end{enumerate}
\end{intro_thmC}
\begin{proof} The proof of Proposition \ref{prop:compare_bdd_acyclic} (using the \emph{Mapping Theorem}) shows that the conditions (1) and (2) of Theorem \ref{main:thm:intro} are equivalent respectively to the conditions (1) and (2) of Theorem \ref{thm:main:groups} for the homomorphism $f_*$. 

Since $X$ is path-connected by assumption, the homotopy fiber $F$ is path-connected if and only if $f_*$ is surjective. Therefore, the condition (3) of Theorem \ref{main:thm:intro} is also equivalent to the condition of Theorem \ref{thm:main:groups}(3) for the homomorphism $f_*$ -- again, using the \emph{Mapping Theorem}. 

Finally, assuming that $F$ is path-connected ($\Leftrightarrow H^0_b(F; \R) \cong \R$), the canonical map $F \to B\pi_1(F)$ is amenable by the \emph{Mapping Theorem}. Moreover, the long exact sequence of homotopy groups 
$$\cdots \to \pi_2(Y) \to \pi_1(F) \to \pi_1(X) \xrightarrow{f_*} \cdots$$
shows that the kernel of the homomorphism $\pi_1(F) \to \mathrm{ker}(f_*)$ is abelian. As a consequence of the \emph{Mapping Theorem}, the homomorphism $\pi_1(F) \to \mathrm{ker}(f_*)$ is also amenable. In particular, the condition (4) of Theorem \ref{main:thm:intro} is equivalent to the condition of Theorem \ref{thm:main:groups}(4) for $f_*$. Then the result follows directly from Theorem \ref{thm:main:groups}. 
\end{proof}

%\begin{rem}
%There are similar versions of Theorem \ref{thm:main:groups} and Theorem \ref{main:thm:intro} for the class of $\langle W \rangle$-boundedly acyclic maps/homomorphisms for general (non-trivial) Banach spaces $W$  (see Remarks \ref{W-bdd-acyclic} and \ref{W-bdd-acyclic2}). \grcomm{Please check. I think $W$ must be trivial for the statements to make sense, and also non-trivial because the statement fails for $W=0$} \mmcomm{All these comments about $W$ are a bit slippery, 
%I would consider to remove them if we are not super convinced...because one crucial fact of $\R$ is that it is dual by default, while this is not the case for a general $W$. And I am not quite sure if we need it or not.}
%\end{rem}

\begin{scholium}
We recall that a map $f \colon X \to Y$ of path-connected spaces is called \emph{acyclic} if its homotopy fiber $F$ satisfies $\widetilde{H}_*(F; \Z) = 0$. Acyclic maps can be equivalently characterized as the maps $f \colon X \to Y$ which induce 
isomorphisms in cohomology for all local coefficients of abelian groups on $Y$ (see \cite{HH-acyclic, Raptis-acyclic} for more 
details and further characterizations). Thus, the equivalence $(1) \Leftrightarrow (4)$ in Theorem~\ref{main:thm:intro} is an analogue of this classical characterization of acyclic maps. 

We note that the classical characterization of acyclic maps can be shown using the Serre spectral sequence. A version of the Hochschild--Serre spectral sequence for the bounded cohomology of (not necessarily discrete) groups was established and studied by Burger--Monod \cite{Burger_Monod_2002, monod}. There is a subtle point here which concerns the identification of the $E_2$-page; this is due to the fact that bounded cohomology groups (as seminormed spaces) are not Hausdorff in general (see \cite[Chapter 12]{monod}). Still, it is possible to use the Hochschild--Serre spectral sequence directly for the implication $(4) \Rightarrow (1)$ in Theorem~\ref{thm:main:groups} -- our proof is, in fact, essentially not very different (see also~\cite{echtler} for more details). 
%Moreover, 
We expect that the Hochschild--Serre spectral sequence agrees in general with the hypercohomology spectral sequence associated with the resolution \eqref{resolution} (without any prior assumptions on the $\R[\Gamma]$-modules $W_j$).
%Finally, we note that using the \emph{Mapping Theorem}, as we did in the deduction of Theorem~\ref{main:thm:intro} from Theorem~\ref{thm:main:groups}, it is also possible to formulate an analogue of the Serre spectral sequence for the bounded cohomology of topological spaces.  
\end{scholium}

\subsection{Stability properties of boundedly acyclic maps} We discuss the stability properties of boundedly $n$-acyclic maps under various homotopy-theoretic constructions. First, the characterization of boundedly $n$-acyclic maps in Theorem~\ref{main:thm:intro} has the following immediate consequence. 

\begin{cor}
The class of boundedly $n$-acyclic maps is closed under homotopy pullbacks. More precisely: given a homotopy pullback of based path-connected spaces 
$$
\xymatrix{
X' \ar[r] \ar[d]^{f'} & X \ar[d]^f \\
Y' \ar[r] & Y
}
$$
where $f$ is boundedly $n$-acyclic, then so is $f'$. 

Conversely, if $f'$ is boundedly $n$-acyclic, then so is $f$. 
\end{cor}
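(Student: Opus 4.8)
The plan is to reduce to the homotopy-fiber characterization of boundedly $n$-acyclic maps furnished by Theorem~\ref{main:thm:intro}, and then to exploit the fact that parallel maps in a homotopy pullback square have equivalent homotopy fibers. Write $F$ and $F'$ for the homotopy fibers of $f$ and $f'$ respectively. By the equivalence $(1) \Leftrightarrow (4)$ of Theorem~\ref{main:thm:intro}, the hypothesis that $f$ is boundedly $n$-acyclic means precisely that $F$ is a boundedly $n$-acyclic space, while the desired conclusion for $f'$ is equivalent to $F'$ being a boundedly $n$-acyclic space. Thus it suffices to prove that $F$ boundedly $n$-acyclic implies $F'$ boundedly $n$-acyclic.

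The key homotopy-theoretic input is that, in a homotopy pullback square as in the statement, the canonical comparison map $F' \to F$ induced on the homotopy fibers of the two vertical maps (over compatible basepoints) is a weak homotopy equivalence; this is standard and can be seen by replacing $f$ with a fibration, forming the strict pullback, and identifying fibers. Since $Y'$ is path-connected, the weak homotopy type of $F'$ does not depend on the chosen basepoint, and in particular $\pi_1(F') \cong \pi_1(F)$. Now the bounded cohomology of a space depends only on its fundamental group, via the \emph{Mapping Theorem} applied to the classifying maps exactly as in the proof of Proposition~\ref{prop:compare_bdd_acyclic}; equivalently, by the remark following Definition~\ref{defi:intro:VBCR}, a space is boundedly $n$-acyclic if and only if its fundamental group is. Either formulation shows that $F$ boundedly $n$-acyclic forces $F'$ boundedly $n$-acyclic, whence $f'$ is boundedly $n$-acyclic by Theorem~\ref{main:thm:intro}.

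The main point requiring care is not any computation but the clean invocation of the homotopy-fiber comparison: one must ensure that Theorem~\ref{main:thm:intro} genuinely applies to $f'$, i.e.\ that $X'$ and $Y'$ are based and path-connected and that $F'$ is path-connected. The latter (equivalently $H^0_b(F'; \R) \cong \R$) is automatic here, since $F' \simeq F$ and $F$ is path-connected because $f$ is assumed boundedly $n$-acyclic.
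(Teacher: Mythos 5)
Your argument is correct and is essentially the paper's own proof: both invoke the equivalence $(1) \Leftrightarrow (4)$ of Theorem~\ref{main:thm:intro} together with the standard fact that the two vertical maps in a homotopy pullback have weakly equivalent homotopy fibers. The additional remarks on basepoints and path-connectedness of $F'$ are fine but not a departure from the paper's route.
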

\begin{proof}
This follows easily from Theorem~\ref{main:thm:intro} $(1) \Leftrightarrow (4)$ because $f$ and $f'$ have the same homotopy fiber.
\end{proof} 

Furthermore, even though bounded cohomology does not satisfy excision in general, the characterization $(1) \Leftrightarrow (4)$ of Theorem~\ref{main:thm:intro} shows that boundedly ($n$-)acyclic maps satisfy the following \emph{descent} property.

\begin{cor}
The class of boundedly $n$-acyclic maps is closed under glueing of equifibered maps. More precisely: given a diagram of based path-connected spaces 
$$
\xymatrix{
X_1 \ar[d]_{f_1} & X_0 \ar[r] \ar[l] \ar[d]^{f_0} & X_2 \ar[d]^{f_2} \\
Y_1 & Y_0 \ar[r] \ar[l] & Y_2 
}
$$
where both squares are homotopy pullbacks and the three vertical maps are boundedly $n$-acyclic, then the induced map between the homotopy pushouts 
$$f \colon X_1 \bigcup^h_{X_0} X_2 \to Y_1 \bigcup^h_{Y_0} Y_2$$
is also boundedly $n$-acyclic.
\end{cor}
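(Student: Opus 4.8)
The plan is to reduce the statement to a purely homotopy--theoretic assertion about homotopy fibers by means of the characterization $(1) \Leftrightarrow (4)$ of Theorem~\ref{main:thm:intro}. Write $f \colon X \to Y$ for the induced map between the homotopy pushouts $X = X_1 \cup^h_{X_0} X_2$ and $Y = Y_1 \cup^h_{Y_0} Y_2$, and set $F \coloneqq \hof(f)$. All of the spaces involved are path-connected, being homotopy pushouts of path-connected spaces along maps out of the path-connected spaces $X_0$ and $Y_0$, so $f$ is a map between based path-connected spaces and Theorem~\ref{main:thm:intro} applies to it. It therefore suffices to show that $F$ is boundedly $n$-acyclic.

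First I would exploit the equifibered hypothesis to see that the three given maps have a common homotopy fiber. In a homotopy pullback square the homotopy fibers of the two vertical maps are canonically equivalent; applying this to the left and right squares yields $\hof(f_1) \simeq \hof(f_0) \simeq \hof(f_2)$. Denote this common homotopy fiber by $F_0$. It is path-connected, since each $f_i$ is boundedly $n$-acyclic and hence, by Theorem~\ref{main:thm:intro} $(1) \Rightarrow (4)$ (valid for $n \geq 0$), its homotopy fiber satisfies $H^0_b(F_0; \R) \cong \R$. By the same implication applied to, say, $f_1$, the space $F_0$ is boundedly $n$-acyclic.

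The main step, and the point where the real content lies, is the identification $\hof(f) \simeq F_0$. This is exactly the descent (van Kampen) property of homotopy pushouts of spaces, together with the equifibered hypothesis. Concretely, because both squares are homotopy pullbacks, the span $X_1 \leftarrow X_0 \to X_2$, together with its map to $Y$, constitutes a \emph{Cartesian} descent datum over the pushout $Y$: the pullback squares say precisely that $X_0 \simeq X_1 \times^h_{Y_1} Y_0$ and $X_0 \simeq X_2 \times^h_{Y_2} Y_0$. Descent for the $\infty$-topos of spaces (equivalently, the cube theorems of Mather for homotopy pushouts) then implies that for each $i$ the square
$$
\xymatrix{
X_i \ar[r] \ar[d]_{f_i} & X \ar[d]^{f} \\
Y_i \ar[r] & Y
}
$$
is again a homotopy pullback, i.e. $X_i \simeq X \times^h_Y Y_i$. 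The hard part will be to invoke this descent statement cleanly; everything after it is formal. Choosing the basepoint of $Y$ to be the common image of the basepoints (so that it lands in the image of $Y_1 \to Y$) and using the pasting and cancellation laws for homotopy pullbacks, we compute
$$
\hof(f) = \ast \times^h_Y X \simeq \ast \times^h_{Y_1}\bigl(Y_1 \times^h_Y X\bigr) \simeq \ast \times^h_{Y_1} X_1 = \hof(f_1) \simeq F_0 .
$$

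Finally, since $\hof(f) \simeq F_0$ is boundedly $n$-acyclic, Theorem~\ref{main:thm:intro} $(4) \Rightarrow (1)$ applied to the map $f \colon X \to Y$ shows that $f$ is boundedly $n$-acyclic, which is the desired conclusion. The only nontrivial ingredient beyond Theorem~\ref{main:thm:intro} is the homotopy--fiber descent statement in the third paragraph; the rest consists of standard manipulations of homotopy pullbacks.
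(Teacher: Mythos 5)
Your argument is correct and matches the paper's proof in essence: the paper likewise reduces the statement to the fact that the homotopy fiber of $f$ agrees with the common homotopy fiber of the $f_i$ (citing Puppe's classical result and the modern descent perspective) and then applies Theorem~\ref{main:thm:intro} $(1) \Leftrightarrow (4)$. Your extra detail on deriving the pullback squares $X_i \simeq X \times^h_Y Y_i$ from descent and then cancelling to get $\hof(f) \simeq \hof(f_1)$ is a valid unpacking of the same key step.
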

\begin{proof}
The homotopy fiber of $f$ agrees (up to weak homotopy equivalence) with the common homotopy fiber of the maps $f_i, i = 0,1,2$ (this is a classical result which can be found in \cite{Puppe74}; see also \cite{RezkTopos} for a modern 
perspective on homotopical descent). Then the required result follows from Theorem~\ref{main:thm:intro} $(1) \Leftrightarrow (4)$. 
\end{proof}

Concerning the stability of boundedly $n$-acyclic maps with respect to homotopy pushouts, the situation is more delicate:

\begin{cor}
Let 
$$
\xymatrix{
X \ar[r]^g \ar[d]^{f} & X' \ar[d]^{f'} \\
Y \ar[r] & Y'
}
$$
be a homotopy pushout of based path-connected spaces, where $f$ is boundedly $n$-acyclic. Let $H \coloneqq \ker(f_*) \unlhd \pi_1(X)$ and let $H'$ denote the image of $H$ under $g_* \colon \pi_1(X) \to \pi_1(X')$. Then $f'$ is boundedly $n$-acyclic if and only if the normal closure of $H' \leq \pi_1(X')$ is boundedly $n$-acyclic. 
\end{cor}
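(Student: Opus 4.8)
The plan is to reduce the statement to the level of fundamental groups and then apply the kernel characterization of boundedly $n$-acyclic homomorphisms. By Proposition~\ref{prop:compare_bdd_acyclic}, a map of based path-connected spaces is boundedly $n$-acyclic if and only if the induced homomorphism on fundamental groups is boundedly $n$-acyclic; and by Theorem~\ref{thm:main:groups} $(1) \Leftrightarrow (4)$, a homomorphism of discrete groups is boundedly $n$-acyclic precisely when it is surjective and its kernel is a boundedly $n$-acyclic group. Thus it will suffice to identify $\ker(\pi_1(f'))$ explicitly and to check that $\pi_1(f')$ is surjective.

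First I would pass to fundamental groups. Since all four spaces are path-connected and the square is a homotopy pushout, the Seifert--van Kampen theorem identifies $\pi_1(Y')$ with the pushout (amalgamated product) in the category of groups
$$
\pi_1(Y') \cong \pi_1(Y) *_{\pi_1(X)} \pi_1(X'),
$$
with structure maps $\pi_1(f) \colon \pi_1(X) \to \pi_1(Y)$ and $\pi_1(g) \colon \pi_1(X) \to \pi_1(X')$. Since $f$ is boundedly $n$-acyclic, the map $\pi_1(f)$ is boundedly $n$-acyclic, hence surjective with kernel exactly $H$ (Theorem~\ref{thm:main:groups}).

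Next I would establish the key group-theoretic lemma: in a pushout of groups
$$
\xymatrix{
A \ar[r]^{\beta} \ar[d]_{\alpha} & B \ar[d] \\
C \ar[r] & D
}
$$
in which $\alpha$ is surjective with kernel $H$, the opposite structure map $B \to D$ is surjective and its kernel equals the normal closure $N$ of $\beta(H)$ in $B$. I would prove this by verifying that the quotient $B/N$ satisfies the universal property of the pushout: the assignment $\alpha(a) \mapsto \overline{\beta(a)}$ is a well-defined homomorphism $C \to B/N$ (using surjectivity of $\alpha$ together with $N \supseteq \beta(\ker\alpha)$), and, paired with the quotient map $B \to B/N$, it enjoys the required universal property (any pair of maps out of $B$ and $C$ agreeing on $A$ automatically kills $\beta(H)$, hence factors through $B/N$). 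Under the resulting identification $D \cong B/N$, the map $\pi_1(f')$ becomes the quotient $B \to B/N$; in particular it is surjective, with kernel $N = \langle\langle \pi_1(g)(H) \rangle\rangle = \langle\langle H' \rangle\rangle$, the normal closure of $H'$ in $\pi_1(X')$.

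Combining these steps, $\pi_1(f')$ is surjective with kernel the normal closure of $H'$ in $\pi_1(X')$; hence, by Theorem~\ref{thm:main:groups} and Proposition~\ref{prop:compare_bdd_acyclic}, the map $f'$ is boundedly $n$-acyclic if and only if this normal closure is a boundedly $n$-acyclic group, which is the assertion. The main obstacle is the purely group-theoretic lemma identifying $\ker(B \to D)$ with a normal closure; once that is in place, everything else is a formal consequence of the results already established, with the bonus that the same universal-property argument supplies the surjectivity of $\pi_1(f')$ needed to invoke Theorem~\ref{thm:main:groups}.
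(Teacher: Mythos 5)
Your proposal is correct and follows essentially the same route as the paper: apply van Kampen to get a pushout of groups, reduce via Proposition~\ref{prop:compare_bdd_acyclic} and Theorem~\ref{thm:main:groups} to the kernel of $\pi_1(f')$, and identify that kernel as the normal closure of $H'$ in $\pi_1(X')$. The only difference is that you spell out the universal-property verification of the group-theoretic fact that the paper simply reads off from its diagram of stacked pushouts.
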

\begin{proof}
By the van Kampen theorem, we obtain a pushout of groups after applying $\pi_1$ to the given homotopy pushout. Note that $f_*$ is surjective; therefore, so is $f'_*$. By Proposition \ref{prop:compare_bdd_acyclic}, the map $f'$ is boundedly $n$-acyclic if and only if the homomorphism $f'_*$ is boundedly $n$-acyclic. By Theorem~\ref{thm:main:groups}, this holds 
if and only if $\ker(f'_*) \in \vbcr_n$. Using the diagram of pushouts of groups
$$
\xymatrix{
H \ar@{>->}[r] \ar[d] & \pi_1(X) \ar[r]^{g_*} \ar@{->>}[d]^{f_*} & \pi_1(X') \ar@{->>}[d]^{f'_*} \\
\{*\} \ar[r] & \pi_1(Y) \ar[r] & \pi_1(Y')
}
$$
we observe that $\ker(f'_*)$ is the normal closure of $H' \leq \pi_1(X')$ and the result follows. 
\end{proof}

\subsection{An application to simplicial volume}

We end this section with an application of Theorem~\ref{main:thm:intro} to \emph{simplicial volume}. The simplicial volume is a homotopy invariant of oriented compact manifolds introduced by Gromov~\cite{vbc}, which measures the complexity of manifolds in terms of the ``size'' of their fundamental cycles in the singular chain complex. We recall that given an oriented compact manifold (possibly with non-empty boundary), a \emph{(relative) fundamental cycle} $c \in \, C_n(M, \partial M; \R)$
is simply a representative of the \emph{(relative) fundamental class}
$[M, \partial M] \in \, H_n(M, \partial M; \R) \cong \R$.
\begin{defi}
Let $M$ be an oriented compact connected $n$-dimensional manifold with (possibly empty) boundary $\partial M$. The \emph{(relative) simplicial volume} of $M$ is defined to be the following non-negative real number:
$$\sv{M, \partial M} \coloneqq \inf \left\{\sum_{i = 1}^k |\alpha_i| \, \Big| \, \sum_{i = 1}^k \alpha_i \, \sigma_i \mbox{ is a fundamental cycle of $(M, \partial M)$}\right\}.
$$
\end{defi}
In order to compute the simplicial volume of an oriented compact manifold with boundary, it is convenient to extend the definition of bounded cohomology of spaces to the relative case. The relative bounded cochain complex is the subobject of the relative singular cochain complex $C^{\bullet}(M, \partial M; \R)$ which consists of bounded cochains, 
$$
C^\bullet_b(M, \partial M; \R) \coloneqq \left\{\varphi \in \, C^\bullet(M, \partial M; \R) \, \Big| \, \sup_{\sigma \colon \Delta^n \to M} |\varphi(\sigma)| < \infty\right\}. 
$$ 
The cohomology of this cochain complex is the \emph{relative bounded cohomology of $(M, \partial M)$}. There is a natural inclusion of cochain complexes
$$
C^\bullet_b(M, \partial M; \R) \hookrightarrow C^\bullet(M, \partial M; \R)
$$
which induces a map in cohomology
$$
\comp^\bullet_{(M, \partial M)} \colon H^\bullet_b(M, \partial M; \R) \to H^\bullet(M, \partial M; \R)
$$
called the \emph{comparison map}. A useful relationship between the simplicial volume and the comparison map is given by the following elementary fact~\cite{vbc}, \cite[Proposition~5.15]{Loehthesis}:

\begin{prop}[Duality principle]\label{duality:principle}
Let $M$ be an oriented compact connected $n$-manifold with (possibly empty) boundary $\partial M$. 
Then, 
$$
\sv{M, \partial M} > 0 \quad \Longleftrightarrow \comp_{(M, \partial M)}^n \mbox{ is surjective}.
$$
\end{prop}

As an application of Theorem~\ref{main:thm:intro} and Proposition~\ref{duality:principle}, we deduce the following
new result on the vanishing of the relative simplicial volume of compact manifolds with boundary.

\begin{cor}\label{cor:sv:vanishes}
Let $M$ be an oriented compact connected  $n$-manifold with non-empty connected boundary $\partial M$.
Let $i \colon \partial M \to M$ denote the boundary inclusion and let $F$ denote its homotopy fiber. 
Suppose that $F$ is boundedly $(n-1)$-acyclic.
% $H^0_b(F; \R) = \R$ and $H^i_b(F; \R) = 0$ for $1 \leq i \leq n-1$. 
Then, $H_b^k(M, \partial M; \R) = 0$ for all $0 \leq k \leq n$. 
In particular, we have $$\sv{M, \partial M} = 0.$$
\end{cor}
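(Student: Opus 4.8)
The plan is to reduce the statement to the duality principle (Lemma~\ref{duality:principle}) together with a comparison between the long exact sequences of the pair $(M, \partial M)$ in bounded and in singular cohomology, feeding in the bounded acyclicity of the boundary inclusion through Theorem~\ref{main:thm:intro}. First I would reinterpret the hypotheses: the condition $H^0_b(F;\R) \cong \R$ says that $F$ is path-connected, and the vanishing $H^i_b(F;\R) = 0$ for $1 \le i \le n-1$ says exactly that $F$ is boundedly $(n-1)$-acyclic. Hence, by the equivalence $(4) \Leftrightarrow (1)$ of Theorem~\ref{main:thm:intro} applied to the boundary inclusion $i \colon \partial M \to M$, the map $i$ is boundedly $(n-1)$-acyclic. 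Specializing to the trivial coefficient module $V = \R$, this means that the restriction maps $\res^k \colon H^k_b(M;\R) \to H^k_b(\partial M;\R)$ are isomorphisms for $k \le n-1$ and injective for $k = n$.

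Next I would invoke Lemma~\ref{duality:principle} twice: for the oriented $n$-manifold $(M, \partial M)$ it gives $\sv{M, \partial M} = 0$ if and only if $\comp^n_{(M, \partial M)}$ fails to be surjective, and for the closed oriented $(n-1)$-manifold $\partial M$ it gives $\sv{\partial M} = 0$ if and only if $\comp^{n-1}_{\partial M}$ fails to be surjective. I would then record the relevant singular cohomology of a compact oriented $n$-manifold with non-empty connected boundary: since $M$ retracts onto a spine of dimension $\le n-1$ we have $H^n(M;\R) = 0$, while Lefschetz duality gives $H^n(M, \partial M;\R) \cong \R \cong H^{n-1}(\partial M;\R)$. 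Feeding these into the singular long exact sequence of the pair shows that the connecting map $\delta \colon H^{n-1}(\partial M;\R) \to H^n(M, \partial M;\R)$ is an isomorphism and that the singular restriction $\iota^* \colon H^{n-1}(M;\R) \to H^{n-1}(\partial M;\R)$ is the zero map.

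The two conclusions then follow from short diagram chases. For $\partial M$, naturality of the comparison map with respect to $i$ gives $\comp^{n-1}_{\partial M} \circ \res^{n-1} = \iota^* \circ \comp^{n-1}_M$; since $\iota^* = 0$ and $\res^{n-1}$ is surjective, this forces $\comp^{n-1}_{\partial M} = 0$, so it is not surjective and $\sv{\partial M} = 0$. For $(M, \partial M)$, I would use the bounded cohomology long exact sequence of the pair: the surjectivity of $\res^{n-1}$ makes the connecting map $H^{n-1}_b(\partial M;\R) \to H^n_b(M, \partial M;\R)$ vanish, so $H^n_b(M, \partial M;\R) \to H^n_b(M;\R)$ is injective, while the injectivity of $\res^n$ forces this same map to have trivial image; together these give $H^n_b(M, \partial M;\R) = 0$. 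Consequently $\comp^n_{(M, \partial M)}$ has trivial domain and cannot be surjective onto $H^n(M, \partial M;\R) \cong \R$, so $\sv{M, \partial M} = 0$. Both simplicial volumes vanish, which in particular establishes the claimed equivalence.

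The main obstacle I anticipate is the careful treatment of the relative bounded cohomology of $(M, \partial M)$: one must set up the long exact sequence of the pair from the short exact sequence of bounded cochain complexes $0 \to C^\bullet_b(M, \partial M;\R) \to C^\bullet_b(M;\R) \to C^\bullet_b(\partial M;\R) \to 0$ --- whose surjectivity on the right uses that a bounded cochain on $\partial M$ extends to $M$ by zero --- and check that the inclusions of bounded cochains into all singular cochains assemble into a morphism from this sequence to the singular one, compatibly with the connecting homomorphisms. Once this compatibility is in place, the singular cohomology input via Lefschetz duality and the diagram chases above are routine.
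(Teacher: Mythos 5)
Your argument is correct and shares the paper's skeleton (Theorem~\ref{main:thm:intro}(4)$\Rightarrow$(1) applied to $i$, the duality principle, and the long exact sequences of the pair), but the endgame is genuinely different and in fact proves a stronger statement. The paper only exploits the injectivity of $\res^n \colon H^n_b(M;\R) \to H^n_b(\partial M;\R)$: this makes the bounded connecting map $H^{n-1}_b(\partial M;\R) \to H^n_b(M,\partial M;\R)$ surjective, and comparing it with the singular connecting map (an isomorphism between copies of $\R$) yields the two-way equivalence $\comp^{n-1}_{\partial M}=0 \Leftrightarrow \comp^n_{(M,\partial M)}=0$ without deciding whether either map vanishes. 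You additionally use the surjectivity of $\res^{n-1}$ together with the classical fact that the singular restriction $\iota^*\colon H^{n-1}(M;\R)\to H^{n-1}(\partial M;\R)$ is zero (the fundamental class of $\partial M$ bounds in $M$), which forces $\comp^{n-1}_{\partial M}=0$ outright; feeding the surjectivity of $\res^{n-1}$ and the injectivity of $\res^n$ into the bounded long exact sequence even gives $H^n_b(M,\partial M;\R)=0$. So under the hypotheses both simplicial volumes vanish unconditionally, and the stated equivalence follows trivially --- your route buys a sharper conclusion at the cost of the extra singular-cohomology input, while the paper's diagram chase is more economical but only delivers the equivalence. Two small points of hygiene: the justification $H^n(M;\R)=0$ is cleaner via Lefschetz duality ($H^n(M;\R)\cong H_0(M,\partial M;\R)=0$ since $\partial M\neq\emptyset$ and $M$ is connected) than via a smooth/PL spine, which is not available for arbitrary topological manifolds; and the identifications $H^n(M,\partial M;\R)\cong\R\cong H^{n-1}(\partial M;\R)$ do use the standing assumption that $\partial M$ is connected, which you should cite explicitly when you convert ``not surjective'' into ``zero'' for maps into these one-dimensional targets.
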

\begin{proof}
Since the homotopy fiber $F$ satisfies the condition $(4)$ of Theorem~\ref{main:thm:intro},
it follows that $i$ is boundedly $(n-1)$-acyclic. In particular, the induced map
$$
H^k_b(i) \colon H^k_b(M; \R) \to H^k_b(\partial M; \R)
$$
is an isomorphism in degrees $0 \leq k \leq n-1$ and injective in degree $k = n$. 
If we now consider the long exact sequence in bounded cohomology of the pair $(M, \partial M)$~\cite[Section~5.7]{Frigerio:book},
$$
\cdots \to H^{k-1}_b(M; \R) \to H^{k-1}_b(\partial M; \R) \to H_b^k(M, \partial M; \R) \to H_b^k(M; \R) \to \cdots
$$
we conclude that $H_b^k(M, \partial M; \R) = 0$ for all $0 \leq k \leq n$. 
%When $k = n$,
%the result follows from the fact that $%
%$$
%H^{n-1}_b(M; \R) \to H^{n-1}_b(\partial M; \R)
%$$
%is an isomorphism together with the fact that 
%$$
%H^{n}_b(M; \R) \to H^{n}_b(\partial M; \R)
%$$
%is injective.
Then the vanishing of the relative simplicial volume is an easy application of Proposition~\ref{duality:principle}. 
\end{proof}

%%%%%%%%%%%%%%%%%%%%%%%%%%%%%%%%%%%%%%%%%%%%%%%%%%%%%%%%%%%%%

% bib
 
\bibliographystyle{abbrv}
\bibliography{svbib}

\begin{thebibliography}{10}

\bibitem{konstantin}
K.~Andritsch.
\newblock Bounded {C}ohomology of {G}roups acting on {C}antor sets.
\newblock arXiv:2210.00459, 2022.

\bibitem{Bouarich:exact}
A.~Bouarich.
\newblock Exactitude {\`a} gauche du foncteur ${H}^n_b(-; \mathbb{R})$ de
  cohomologie born{\'e}e r{\'e}elle.
\newblock {\em Ann. Fac. Sci. Toulouse. $6^e$ s{\'e}rie}, 10(2):255--270, 2001.

\bibitem{bowden}
J.~Bowden.
\newblock On closed leaves of foliations, multisections and stable commutator
  length.
\newblock {\em J. Topol. Anal.}, 3(4):491--509, 2011.

\bibitem{Bucher-Monod}
M.~Bucher and N.~Monod.
\newblock The bounded cohomology of ${SL}_2$ over local fields and
  {S}-integers.
\newblock {\em Int. Math. Res. Not. IMRN}, 6:1601--1611, 2019.

\bibitem{Buehler-article}
T.~B\"{u}hler.
\newblock A derived functor approach to bounded cohomology.
\newblock {\em C. R. Math. Acad. Sci. Paris}, 346(11-12):615--618, 2008.

\bibitem{Bualg}
T.~B{\"u}hler.
\newblock On the algebraic foundations of bounded cohomology.
\newblock {\em Mem. Amer. Math. Soc.}, 214:xxii+97, 2011.

\bibitem{Burger_Monod_99}
M.~Burger and N.~Monod.
\newblock Bounded cohomology of lattices in higher rank lie groups.
\newblock {\em J.\ Eur.\ Math.\ Soc.}, 1(2):199--235, 1999.

\bibitem{Burger_Monod_2002}
M.~Burger and N.~Monod.
\newblock Continuous bounded cohomology and applications to rigidity theory.
\newblock {\em Geom. Funct. Anal. GAFA}, 12(2):219--280, 2002.

\bibitem{cfp_96}
J.~Cannon, W.~Floyd, and W.~Parry.
\newblock Introductory notes on richard thompson's groups.
\newblock {\em Enseign.\ Math.}, 42:215--256, 1996.

\bibitem{Caprace_Monod}
P.-E. Caprace and N.~Monod.
\newblock Relative amenability.
\newblock {\em Groups~Geom.~Dyn.}, 8:747--774, 2014.

\bibitem{echtler}
D.~Echtler.
\newblock Spectral sequences in bounded cohomology.
\newblock Master's thesis, Universit{\"a}t Regensburg, 2021.

\bibitem{fflodha}
F.~Fournier-Facio and Y.~Lodha.
\newblock Second bounded cohomology of groups acting on $1$-manifolds and
  applications to spectrum problems.
\newblock arXiv:2111.07931, 2021.

\bibitem{fflm2}
F.~Fournier-Facio, C.~L{\"o}h, and M.~Moraschini.
\newblock Bounded cohomology and binate groups.
\newblock arXiv:2111.04305 To appear in J.~Aust.~Math.~Soc., 2021.

\bibitem{fflm1}
F.~Fournier-Facio, C.~L{\"o}h, and M.~Moraschini.
\newblock Bounded cohomology of finitely presented groups: {V}anishing,
  non-vanishing, and computability.
\newblock arXiv:2106.13567 To appear in Ann. Sc. Norm. Super. Pisa Cl. Sci.,
  2021.

\bibitem{Frigerio:book}
R.~Frigerio.
\newblock {\em Bounded cohomology of discrete groups}, volume 227 of {\em
  Mathematical Surveys and Monographs}.
\newblock American Mathematical Society, 2017.
\newblock 193 pp.

\bibitem{FM:Grom}
R.~Frigerio and M.~Moraschini.
\newblock Gromov's theory of multicomplexes with applications to bounded
  cohomology and simplicial volume.
\newblock arXiv:1808.07307 To appear in Mem. Amer. Math. Soc., 2019.

\bibitem{vbc}
M.~Gromov.
\newblock Volume and bounded cohomology.
\newblock {\em Publ.\ Math.\ Inst.\ Hautes \'Etudes Sci.}, 56:5--99, 1982.

\bibitem{HH-acyclic}
J.-C. Hausmann and D.~Husemoller.
\newblock Acyclic maps.
\newblock {\em Enseign. Math. (2)}, 25(1-2):53--75, 1979.

\bibitem{Huber}
T.~Huber.
\newblock {\em Rotation quasimorphisms for surfaces}.
\newblock PhD thesis, ETH~Z{\"u}rich, 2012.
\newblock
  \url{http://e-collection.library.ethz.ch/eserv/eth:7059/eth-7059-02.pdf}.

\bibitem{ivanov}
N.~V. Ivanov.
\newblock Foundations of the theory of bounded cohomology.
\newblock volume 143, pages 69--109, 177--178. 1985.
\newblock Studies in topology,~V.

\bibitem{Ivanov17}
N.~V. Ivanov.
\newblock Notes on the bounded cohomology theory.
\newblock arXiv:1708.05150, 2017.

\bibitem{Johnson}
B.~E. Johnson.
\newblock Cohomology in {B}anach algebras.
\newblock {\em Mem.~Amer.~Math.~Soc.}, 127, 1972.

\bibitem{clara:book}
C.~L{\"o}h.
\newblock Group cohomology and bounded cohomology.
\newblock {A}vailable online at
  \url{http://www.mathematik.uni-r.de/loeh/teaching/topologie3{\_}ws0910/prelim.pdf}.

\bibitem{Loehthesis}
C.~L{\"o}h.
\newblock {\em {$\ell^1$}-{H}omology and {S}implicial {V}olume}.
\newblock PhD thesis, WWU~M{\"u}n\-ster, 2007.
\newblock \url{http://nbn-resolving.de/urn:nbn:de:hbz:6-37549578216}.

\bibitem{Loeh:dim}
C.~L{\"o}h.
\newblock A note on bounded-cohomological dimension of discrete groups.
\newblock {\em J.~Math.~Soc.~Japan (JMSJ)}, 69(2):715--734, 2017.

\bibitem{Matsu-Mor}
S.~Matsumoto and S.~Morita.
\newblock Bounded cohomology of certain groups of homeomorphisms.
\newblock {\em Proc. Amer. Math. Soc.}, 94:539--544, 1985.

\bibitem{monod}
N.~Monod.
\newblock {\em Continuous bounded cohomology of locally compact groups}, volume
  1758 of {\em Lecture Notes in Mathematics}.
\newblock Springer-Verlag, Berlin, 2001.

\bibitem{monod04}
N.~Monod.
\newblock Stabilization for $\textup{SL}_n$ in bounded cohomology.
\newblock {\em Contemp. Math.}, 347:191--202, 2004.

\bibitem{monod07}
N.~Monod.
\newblock Vanishing up to the rank in bounded cohomology.
\newblock {\em Math. Res. Let.}, 14(4):681--687, 2007.

\bibitem{monod10}
N.~Monod.
\newblock On the bounded cohomology of semi-simple groups, {S}-arithmetic
  groups and products.
\newblock {\em J. f\"ur die Reine und Angew. Math.}, 640:167--202, 2010.

\bibitem{monod:thompson}
N.~Monod.
\newblock Lamplighters and the bounded cohomology of {T}hompson's group.
\newblock {\em Geom.\ Funct.\ Anal.\ GAFA}, 32:662--675, 2022.

\bibitem{monodnariman}
N.~Monod and S.~Nariman.
\newblock Bounded and unbounded cohomology of diffeomorphism groups.
\newblock arXiv:2111.04365, 2021.

\bibitem{noskov}
G.~A. Noskov.
\newblock Bounded cohomology of discrete groups with coefficients.
\newblock {\em Leningrad Math. J.}, 2(5):1067--1084, 1991.

\bibitem{Puppe74}
V.~Puppe.
\newblock A remark on ``homotopy fibrations''.
\newblock {\em Manuscripta Math.}, 12:113--120, 1974.

\bibitem{Raptis-acyclic}
G.~Raptis.
\newblock Some characterizations of acyclic maps.
\newblock {\em J. Homotopy Relat. Struct.}, 14(3):773--785, 2019.

\bibitem{RezkTopos}
C.~Rezk.
\newblock Toposes and homotopy toposes.
\newblock {\em Preprint (2010)}.
\newblock Available at
  \url{https://faculty.math.illinois.edu/~rezk/homotopy-topos-sketch.pdf}.

\bibitem{tomDieck-AT}
T.~tom Dieck.
\newblock {\em Algebraic topology}.
\newblock EMS Textbooks in Mathematics. European Mathematical Society (EMS),
  Z\"{u}rich, 2008.

\end{thebibliography}

\end{document}